\documentclass[11pt]{article}
\pdfoutput=1

\usepackage[margin=1.05in]{geometry}
\usepackage{amsmath,amssymb,amsthm}
\usepackage{mathtools}
\usepackage{graphicx}
\usepackage{booktabs}
\usepackage{placeins}
\usepackage{needspace}
\usepackage{array}
\usepackage{microtype}
\usepackage{xcolor}
\usepackage[shortlabels]{enumitem}
\setlist{topsep=3pt,itemsep=1pt,parsep=0pt}
\usepackage[colorlinks=true,linkcolor=blue!45!black,citecolor=blue!45!black,urlcolor=blue!45!black]{hyperref}
\hypersetup{pdftitle={Uniform Bessel bounds for magnetization crossings in finite Ising chains},pdfauthor={Arjun Pemmasani}}

\newtheorem{theorem}{Theorem}[section]
\newtheorem{lemma}[theorem]{Lemma}
\newtheorem{proposition}[theorem]{Proposition}
\newtheorem{corollary}[theorem]{Corollary}
\theoremstyle{definition}

\theoremstyle{remark}
\newtheorem{remark}[theorem]{Remark}

\newcommand{\E}{\mathbb{E}}
\newcommand{\Prob}{\mathbb{P}}
\newcommand{\Var}{\operatorname{Var}}

\newcommand{\chg}{\operatorname{chg}}
\newcommand{\rep}{\operatorname{rep}}
\newcommand{\sR}{\mathord{+}}
\newcommand{\sL}{\mathord{-}}
\newcommand{\cE}{\mathcal E}
\def\zenodoDOI{10.5281/zenodo.23417554}

\title{Uniform Bessel bounds for magnetization crossings\\ in finite Ising chains}
\author{Arjun Pemmasani\\[2pt]
\small Department of Mathematics, Harvey Mudd College\\
\small Claremont, CA 91711, USA\\
\small \texttt{apemmasani@hmc.edu}}
\date{}
\begin{document}
\maketitle

\begin{abstract}
We consider a finite Ising chain in zero field and ask when the all-up
configuration is as likely as zero magnetization (magnetization one for
an odd number of spins). For both free and periodic ends, the crossing
coupling of a long chain is half the logarithm of its length minus half
the logarithm of that logarithm, up to a term that vanishes as the
length grows. We compute further terms with explicit constants and, for
free ends, bound the crossing at every length. The second term is
missing from an earlier approximation and explains why an earlier numerical
fit gives a slope below one half. To locate the crossing, we compare the
known magnetization law with its Bessel approximation. The Bessel
approximation is above the exact weight, and we bound its relative error
for every length, coupling and magnetization. At the crossing this bound
tends to zero. The central bin still has about as many domain walls as
the logarithm of the length on average, so the crossing has a coupling
width of order the inverse logarithm. In a fixed positive dimensionless
field the last crossing comes from near the edge of the row, at a
coupling of about a quarter of the logarithm of the length. For long
chains it is the one-spin crossing when the field exceeds half the
logarithm of two. Below that value another bin crosses later.
\end{abstract}

\noindent\textbf{Keywords:} exact results; finite-size scaling; rigorous results in statistical mechanics.

\noindent\textbf{MSC2020:} 82B20; 60J10; 60F05; 60G50; 33C10.

\section{Introduction}\label{sec:intro}

\subsection{The crossing}

Let $Z_N(M)$ be the sum of the Ising weights of the configurations of $N$
spins with magnetization $M$ (the partition function of the fixed-magnetization
ensemble). Counting configurations by their domain walls gives a
classical finite sum~\cite{ising1925,antal2004,garciapelayo2009,dantchev2022}. When the
mean number of walls stays bounded as $N$ grows, this sum has a modified
Bessel approximation~\cite{antal2004,dantchev2023,dantchev2024}. We bound
its relative error explicitly for every length, coupling and
magnetization (Theorem~\ref{thm:allbin-bessel} and
Corollary~\ref{cor:fixed-M} for free ends, Theorem~\ref{thm:periodic-bessel} for periodic
ends). The Bessel form always overestimates the
exact weight. Its error also stays small when the mean number of walls
grows like $\log N$, which is the regime of the crossing we study.

Take $N\ge2$ spins $\sigma_i=\pm1$ with free ends and zero-field Ising
weight $\exp(J\sum_{i<N}\sigma_i\sigma_{i+1})$, where $J>0$. Here $J$ is
dimensionless, and it is the product $\beta J_{\rm ex}$ of the inverse temperature and
the exchange coupling $J_{\rm ex}$ of~\cite{stepanyan2024}. The all-up
configuration is the only one with magnetization $\sigma_1+\dots+\sigma_N=N$.
Zero magnetization (or magnetization $\pm1$ for odd $N$) comes from many configurations. For small $J$ domain walls cost
little and a central magnetization is the more likely of the two, and for large $J$
the all-up configuration is. They are equally likely at a single coupling,
the \emph{central crossing}. Stepanyan et~al.\@~\cite{stepanyan2024} study
this crossing in the periodic chain as a thermal transition of a finite
system (the analogue of the transition temperature of a first-order
transition). They approximate it by $J\approx(2\log N-1)/5$. Like them, we compare one
central bin with the single all-up configuration (Remark~\ref{rem:convention}
says what changes for other comparisons). We prove the asymptotics of the crossing with
explicit constants for free and for periodic ends, and for free ends we bound
it at every $N$.

Read from left to right, the spins are the steps of a persistent random
walk~\cite{goldstein1951,gillis1955}, a correspondence that
Garc\'{\i}a-Pelayo~\cite{garciapelayo2009} uses to derive the exact
magnetization laws of the free and the periodic chain, also in a field. The first step is fair, and
each later step repeats the previous one with probability $p$, where
$e^{-2J}=(1-p)/p$. The magnetization is the position after $N$ steps, so the
walk that never turns, which reaches an end, competes with the many walks
that end in the middle.

With two steps the walk ends in the middle exactly when it turns once, so the
three bins have probabilities $\frac p2$, $1-p$ and $\frac p2$, and they are
level at $p=\frac23$. With three steps the bins have probabilities
$\frac{p^2}2$, $\frac{1-p^2}2$, $\frac{1-p^2}2$ and $\frac{p^2}2$, and they are
level at $p=1/\sqrt2$. In terms of the coupling these two crossings are at
$J=\frac12\log2$ and $J=\frac12\log(1+\sqrt2)$. In general exactly one walk reaches a given endpoint,
namely the walk that never turns, and very many walks reach the middle by
turning a few times (each turn brings a factor of about $1-p$).

The middle bin and the endpoints are level when the walk turns about $\log N$
times on average. More precisely, the crossing value $p_N$ of $p$ satisfies
$N(1-p_N)=\log N-\frac12\log\log N-0.467\ldots+o(1)$, and for every $N\ge2$
we trap $N(1-p_N)$ between two explicit functions of $N$ whose distance tends
to $0$ (Theorem~\ref{thm:every-n}). For $p<p_N$ the middle is more likely, and
for $p>p_N$ the ends are. In terms of the coupling, the crossing is at
$J=\frac12(\log N-\log\log N)+O(\log\log N/\log N)$ for free and for periodic
ends, and the correlation length there is about $N/(2\log N)$
(Corollary~\ref{cor:couplings}). Their approximation and their fitted line~\cite{stepanyan2024}
have no $\log\log N$ term, and this term explains why the slope of the
fit is below $\frac12$ (Section~\ref{sec:coupling}). At the
free crossing the central bin carries about $\log N$ domain walls, so its mean energy
exceeds that of the all-up configuration by only $2\log N-\log\log N+O(1)$
in units of the exchange coupling (Proposition~\ref{prop:walls}). For this zero-field comparison of one
central bin with all up, the crossing therefore has a width of order
$1/\log N$ in the coupling. The width $1/N$ of a first-order transition
assumes a nonzero latent heat per spin, so that the energy gap grows like the volume.

The crossing lies between two regimes in which the law is well understood.
When $p=1-\tau/N$ with $\tau>0$ fixed, the rescaled walk tends to the integrated
telegraph process and the magnetization per spin has a scaling
limit~\cite[Theorem~2.1 and Remark~3.4]{herrmann2010}. When $0<p<1$ is fixed,
the magnetization has a normal limit (Theorem~\ref{thm:clt}). At the crossing $\tau$ grows
like $\log N$, so neither limit decides it, and we need bounds that hold
uniformly in $\tau$. Our main tool is a comparison of the middle bin with a
modified Bessel function, with a relative error that is explicit for every $N$
(Theorem~\ref{thm:bessel}). A walk with $d$ directions, which
is the $d$-state Potts chain, is treated in~\cite{paperB}.

\subsection{Setting and notation}\label{sec:setting}

\paragraph{The walk.} The walk makes $N\ge1$ steps. We record a path of the walk as a word
$w=w_1\cdots w_N$ in the letters $\sR$ (right) and $\sL$ (left). The letter $w_1$
is $\sR$ or $\sL$ with probability $\frac12$, and given $w_1,\dots,w_i$,
\[
\Prob(w_{i+1}=w_i)=p,\qquad \Prob(w_{i+1}\ne w_i)=q=1-p .
\]
An index $i<N$ with $w_i\ne w_{i+1}$ is a \emph{switch}. Let $\chg(w)$ be the
number of switches and $\rep(w)=N-1-\chg(w)$. Then
\begin{equation}\label{eq:wordprob}
\Prob(w)=\tfrac12\,p^{\rep(w)}q^{\chg(w)} .
\end{equation}
For $p>0$ we put $u=q/p$, the odds of a switch.

\paragraph{Bins.} The walk ends in bin $k$ if $w$ has $k$ letters $\sR$, for
$0\le k\le N$. We write $P_N(k)$ for the probability of bin $k$, and we set $P_0(0)=1$.
We call the bins $0,\dots,N$ together the \emph{row} (as in a Galton board), and bins
$0$ and $N$ are its \emph{endpoints}. Only the constant word
$\sL\cdots\sL$ reaches bin $0$, and only $\sR\cdots\sR$ reaches bin $N$, so
\[
P_N(0)=P_N(N)=\tfrac12p^{N-1}\qquad(N\ge1).
\]
We call bin $m_N=\lfloor N/2\rfloor$ a middle bin. For odd $N$ the two middle
bins $m_N$ and $m_N+1$ have equal probability by symmetry.

\paragraph{Spins.} Put $\sigma_i=+1$ if $w_i=\sR$ and $\sigma_i=-1$ if
$w_i=\sL$, and $X_N=\sigma_1+\dots+\sigma_N$. The bin is $K_N=(N+X_N)/2$. Since
$\chg(w)=\sum_{i<N}(1-\sigma_i\sigma_{i+1})/2$, \eqref{eq:wordprob}
shows that $\Prob(w)$ is proportional to
$\exp(J\sum_{i<N}\sigma_i\sigma_{i+1})$ with $e^{-2J}=u$. So the
spins form the Ising chain in zero field with free ends, and $J>0$
exactly when $p>\frac12$.

\paragraph{Scales.} Let $L=\log N$ and $\tau=Nq$. Each of the $N-1$ steps after
the first switches with probability $q$, so the expected number of switches is
$(N-1)q=\tau-q$. We call $\tau$ the \emph{switching scale}. In the Ising chain a switch is a domain wall, and
$q=1/(1+e^{2J})$, so $\tau$ is the expected number of domain walls plus $q$. The central crossing has
$\tau$ of order $L$. Near the edges the crossings have larger $\tau$, up to order
$\sqrt N$ for the bins next to an edge (Section~\ref{sec:allbin}).
We use several equivalent parameters, namely the persistence $p$, the odds
$u=q/p=e^{-2J}$ of a switch, the coupling $J$, the switching scale $\tau=Nq$,
and in the Bessel estimates $w=Nu$ and $x=w/2$ (from Section~\ref{sec:middle} on,
the context distinguishes it from a word $w$). For a general bin $a$,
Sections~\ref{sec:allbin-bessel} and~\ref{sec:periodic} instead put
$x=\sqrt{ab}\,u$, which equals $w/2$ at the middle bin when $N$ is even. Near the central crossing
$u$ is of order $L/N$, and then $\tau$ and $Nu=2x$ differ by $O(L^2/N)$.

\paragraph{Ratios and crossings.} For a bin $k$ with $1\le k\le N-1$ we write
$a=k$ and $b=N-k$ for its numbers of letters $\sR$ and $\sL$. For $p>0$ the
ratio of bin $k$ to an endpoint depends on $p$ only through $u$, and it is a
polynomial in $u$ with nonnegative integer coefficients
(see \eqref{eq:allbin-polynomial} below). We write
\[
S_{a,b}(u)=\frac{P_N(a)}{P_N(0)},\qquad S_N(u)=S_{m_N,\,N-m_N}(u)
\]
for this ratio and for its value at the middle. \emph{The crossing} of bin $k$
is the value of $p$ at which bin $k$ is exactly as likely as an endpoint, that
is, $S_{a,b}(u)=1$. It is unique (Section~\ref{sec:allbin-order}), and we write
$p_{a,b}$ for it and $u_{a,b}=(1-p_{a,b})/p_{a,b}$ for its odds. The
\emph{central crossing} is $p_N=p_{m_N,N-m_N}$, with odds $u_N$, and
\begin{equation}\label{eq:zN}
z_N=N(1-p_N)=\frac{Nu_N}{1+u_N}
\end{equation}
is the value of $\tau$ there.

\paragraph{Special functions and asymptotic notation.} The functions $I_0$ and
$I_1$ are the modified Bessel functions of the first kind, of orders zero and
one. We use them through their power
series and their behavior at infinity~\cite[Eqs.~10.25.2, 10.40.1]{dlmf},
\[
I_0(2x)=\sum_{r\ge0}\frac{x^{2r}}{(r!)^2},\quad
I_1(2x)=\sum_{r\ge0}\frac{x^{2r+1}}{r!\,(r+1)!},\quad
I_\nu(z)=\frac{e^z}{\sqrt{2\pi z}}\Bigl(1-\frac{4\nu^2-1}{8z}+O(z^{-2})\Bigr)
\]
as $z\to+\infty$ along the positive real axis, for $\nu=0,1$.
We write $f\sim g$ if $f/g\to1$. Each $O(\cdot)$ comes with the parameters it
is uniform in, and $O_a(\cdot)$ means that the constant may depend on $a$.

\paragraph{Ising dictionary.} Sections~\ref{sec:exact} to~\ref{sec:allbin}
are written in the variables of the walk, and they translate as follows.
Bin $k$ is magnetization $X_N=2k-N$, so the endpoints are full magnetization
$\pm N$. A middle bin has magnetization zero for even $N$ and either $-1$
or $1$ for odd $N$. A switch is a domain wall, and its odds $u=e^{-2J}$ are the
Boltzmann factor of one wall. For free ends, $Z_N(M)$ is the sum of the weights
$\exp(J\sum_{i<N}\sigma_i\sigma_{i+1})$ of the configurations with
magnetization $M$, so $Z_N(N)=e^{(N-1)J}$ and
$S_{a,b}(u)=Z_N(a-b)/Z_N(N)$. The crossing of bin $k$ is the coupling
$-\frac12\log u_{a,b}$ at which magnetization $a-b$ and
magnetization $N$ are equally likely, and at the center it is
$J_N^{\mathrm F}=-\frac12\log u_N$. The correlation length of the infinite
chain in zero field, with no constraint on the magnetization, is
$\xi(J)=-1/\log\tanh J$, and $\tanh J=(1-u)/(1+u)$, so
$N/\xi=N\log\frac{1+u}{1-u}=2Nu(1+O(u^2))$. Since $\tau=Nu/(1+u)$, this gives
$\tau=\frac N{2\xi}(1+O(u))$ as $u\to0$. To leading order, the switching scale is half the
length of the chain measured in correlation lengths. It is exactly the
scaling variable $\zeta=N(1-\tanh J)/2$ of Antal, Droz and
R\'acz~\cite{antal2004} (we keep the letter $\zeta$ for $\zeta_N$ below).

\subsection{Main results}

\paragraph{The coupling at the crossing.} Let $J_N^{\mathrm P}$ be the coupling at the periodic central crossing, and
$J_N^{\mathrm F}$ the free one of Section~\ref{sec:setting}. Put
$c_0=\frac12\log(\pi/8)=-0.467\ldots$ and $c_{\mathrm P}=\frac12\log(\pi/2)$.
Theorem~\ref{thm:periodic-crossing} and Corollary~\ref{cor:couplings} give
\[
J_N^X=\tfrac12(\log N-\log\log N)+\frac{\frac14\log\log N-\frac12c_X}{\log N}
+O\Bigl(\frac{(\log\log N)^2}{(\log N)^2}\Bigr)
\]
for free ends ($X=\mathrm F$, with $c_{\mathrm F}=c_0$) and for periodic ends
($X=\mathrm P$). Since
$c_{\mathrm P}=c_0+\log2$, periodic ends lower the coupling at the crossing by
$(\log2)/(2\log N)$ to leading order. At either crossing the correlation
length is about $N/(2\log N)$, a vanishing fraction of the chain. The
approximation $J\approx(2\log N-1)/5$ of~\cite[Eq.~(16)]{stepanyan2024}
grows too slowly. At $N=10^5$ the exact periodic coupling is $4.576$. The
periodic case of the display without its error term gives $4.578$, and the approximation gives
$4.405$ (Section~\ref{sec:coupling}).

\paragraph{A Bessel bound at every $N$.} Put $h=N/2$ and
$F(x)=2x(I_0(2x)+I_1(2x))$. Theorem~\ref{thm:bessel} says that for every
$N\ge2$ and every $x>0$,
\[
0\le1-\frac{hS_N(x/h)}{F(x)}\le\frac{x(x+1)}h .
\]
Near the crossing $x=Nu/2$ is about $L/2$, so the relative error is
$O(L^2/N)$. Theorem~\ref{thm:allbin-bessel} gives the
same kind of bound for every bin, with relative error at most $Nu^2/2+u$.

\paragraph{The central crossing at every $N$.} For free ends, let $\zeta_N$ be the positive
root of $8\zeta e^{2\zeta}=\pi N^2$, that is, $\zeta_N=\frac12W_0(\pi N^2/4)$
with $W_0$ the principal branch of the Lambert function, and let
$\varepsilon_N=\zeta_N(\zeta_N+1)/N+1/(16\zeta_N^2)$. Theorem~\ref{thm:every-n}
says that
\[
\zeta_N+\frac1{8\zeta_N}-\varepsilon_N<z_N<\zeta_N+\frac1{8\zeta_N}
\qquad\text{for every }N\ge2.
\]
Note that $\varepsilon_N\to0$. In terms of the coupling, with
$Y_N=\zeta_N+\frac1{8\zeta_N}$ and $Y_N'=Y_N-\varepsilon_N$, this gives
$\frac12\log(N/Y_N-1)<J_N^{\mathrm F}<\frac12\log(N/Y_N'-1)$
(Corollary~\ref{cor:every-n}(a)). For $N\ge175$ the proof is analytic. For
$2\le N\le174$ we decide the sign of $S_N-1$ at two rationals, one on each side
of the root, in exact integer arithmetic, compare the resulting bracket with
$\zeta_N$ in outward interval arithmetic, and check both steps again in the
Lean proof assistant. The same bounds give two simpler statements. With $c_0$ as above, the
crossing satisfies
\[
z_N+\tfrac12\log z_N=L+c_0+\frac1{8z_N}+R_N,\qquad
|R_N|\le\frac1{4L^2}+\frac{L^2+L}N\quad(N\ge2)
\]
(Proposition~\ref{prop:crossing-eq}), and as $N\to\infty$
\[
z_N=L-\tfrac12\log L+c_0+\frac{\frac14\log L-\frac12c_0+\frac18}{L}
+O\Bigl(\frac{(\log L)^2}{L^2}\Bigr)
\]
(Corollary~\ref{cor:every-n}(c)). The convergence is slow, and $z_N/L$ is still
only $0.84$ at $N=10^4$ (Section~\ref{sec:middle}).

\paragraph{Every bin.} For free ends, let bin $a$ be at distance $a$ from the nearer edge,
with $b=N-a\ge a$. Next to the edge the crossing is at
$p_N^{\mathrm{adj}}=(1+\sqrt{N-1})/(2+\sqrt{N-1})$. For $N\ge4$ the two bins
next to the edges are the least likely of all at the central crossing
(Proposition~\ref{prop:adjacent}). As $a\to\infty$,
Theorem~\ref{thm:allbin-expansion} expands $2\sqrt{ab}\,u_{a,b}$ to the same
order as $z_N$, uniformly in $b\ge a$. For fixed $a$ the crossing comes
instead from a Laguerre polynomial, and $u_{a,b}=c_a/\sqrt b-1/b+O_a(b^{-3/2})$ as $b\to\infty$,
where $c_a^2$ is the positive root of an explicit polynomial of degree $a$
(Theorem~\ref{thm:edge-laguerre}). The two regimes match
(Theorem~\ref{thm:edge-matching}).

\paragraph{Periodic boundaries.} If the steps are closed into a loop, so that
the last step also interacts with the first, the central crossing moves by
$(\log2)/N$ in the odds to leading order (Theorem~\ref{thm:periodic-crossing}).
Under either boundary, at odds $u_N^X+t/N$, where $u_N^X$ is the central
crossing of that boundary, the ratio of the bin nearest to $N/2+yN/\sqrt{\log N}$ to an
endpoint tends to $e^{t-2y^2}$ as $N\to\infty$, for fixed $t$ and $y$
(Theorem~\ref{thm:critical-window}).

\paragraph{Domain walls and the energy gap.}
At the free central crossing, the central bin has
$z_N+\frac12+O(1/z_N)$ domain walls on average, so its mean energy
exceeds that of the all-up configuration by
$2\log N-\log\log N+O(1)$ in units of the exchange coupling
(Proposition~\ref{prop:walls}).
Hence a factor $e$ change in their probability ratio takes a coupling
change of about $1/(2\log N)$, whose slow decay we discuss in
Section~\ref{sec:discussion}.

\paragraph{A magnetic field.} Fix a dimensionless field
$\mathcal B=\beta H>0$ (written $\beta h$ in~\cite{stepanyan2024}). For free and for periodic ends,
both comparisons behind the transitions of~\cite{stepanyan2024} then come from the edge of
the row, at $J=\frac14\log N+O(1)$, and we find the limit of the constant. For large $N$ and $\mathcal B>\mathcal B_c=\frac12\log2$, the
one-spin crossing is the last crossing of the all-up configuration as $J$ increases. Below
$\mathcal B_c$ another bin crosses later, and at $\mathcal B_c$ the
gap between the two crossing couplings tends to zero (Theorem~\ref{thm:field}).

We prove these results in Sections~\ref{sec:exact} to~\ref{sec:periodicwindow}
and discuss them in Section~\ref{sec:discussion}.
Section~\ref{sec:verify} lists what was computed and what is in
Lean, and Appendix~\ref{app:every-n} has the longer proofs of Section~\ref{sec:middle}.

\subsection{Related work}

Early treatments of the persistent (or correlated) random walk include Goldstein~\cite{goldstein1951}
and Gillis~\cite{gillis1955}. Renshaw and Henderson~\cite{renshaw1981} study the
symmetric correlated walk, derive exact transition probabilities and
investigate limiting distributions. Konno~\cite[Lemma~2 and Theorem~7]{konno2009}
writes the exact law with the hypergeometric function ${}_2F_1$ and proves the
integrated telegraph limit when $p=1-\theta/N$ with $\theta\in(0,1)$ fixed.
Kiumi, Konno and Tamura~\cite[Proposition~3]{kiumi2022} write the return
probability of the correlated random walk with Legendre polynomials.
Herrmann and Vallois~\cite[Theorem~2.1 and Remark~3.4]{herrmann2010}
give the process limit and its law, including the Bessel density and endpoint
atoms. With $\tau$ replaced by $Nu$, their law gives the Bessel form of every
bin that we use in Section~\ref{sec:allbin-bessel}. These fixed-scale limits
do not by themselves give the uniform finite-$N$ bounds needed when $\tau$
grows like $\log N$. Dekking and Kong~\cite{dekkingkong2011} study the Markov
binomial law, give its variance and find when it has one, two or three modes.
Their results already imply the crossing of the bins next to the edges and the
ordering of the bin probabilities within a row (Section~\ref{sec:allbin-order}).

For the Ising chain, Antal, Droz and R\'acz~\cite{antal2004} give the
magnetization law at a fixed switching scale $\tau$ under several
boundary conditions. Denisov and H\"anggi~\cite[Eq.~(3.6)]{denisov2005}
give the joint law of the numbers of up spins, up domains and domain walls
in the free chain. Garc\'{\i}a-Pelayo~\cite[Eqs.~(14) and (15)]{garciapelayo2009}
gives the exact free and periodic magnetization laws in hypergeometric
form, and his Eq.~(17) adds a field for both boundary conditions.
Dantchev and Rudnick~\cite{dantchev2022} give the
exact free and periodic coefficients. Dantchev, Tonchev and Rudnick
approximate these coefficients by Bessel functions when their variable
$x_t=2Nu$ is of order one, for periodic
ends~\cite[Section~4.3.1 and Appendix~D]{dantchev2023} and with relative
error $O(1/N)$ for free
ends~\cite[Section~3, Eq.~(3.10)]{dantchev2024}. For periodic ends they
also give the form of their scaling function for large
$x_t$~\cite[Eq.~(4.14)]{dantchev2023}. At our crossing,
however, $x_t$ grows like $2\log N$, so we need a remainder that is
uniform for the exact chain beyond that fixed scale.

The scaling variable of Antal, Droz and R\'acz is our $\tau$. Their law
already suggests the crossing if we evaluate it at $\tau$ of order
$\log N$ (although their limit keeps $\tau$ fixed). For free ends the
endpoint atom is $e^{-\tau}/2$, while the density at zero magnetization
is $\frac12\tau e^{-\tau}(I_0(\tau)+I_1(\tau))$
by~\cite[Eq.~(47)]{antal2004}. Note that neighboring magnetizations per
spin are $2/N$ apart. Hence the middle bin and an endpoint should balance
when the atom equals $2/N$ times this density, giving
\[
\tau\bigl(I_0(\tau)+I_1(\tau)\bigr)=N/2.
\]
This is the equation $F(x_B)=h$ for the Bessel root $x_B$ of
Theorem~\ref{thm:bessel}(ii), with $\tau=2x_B$.
Writing $z_B=2x_B$, its large-argument expansion gives
\[
z_B+\tfrac12\log z_B
=L+c_0+\frac1{8z_B}+O(z_B^{-2}),
\]
with the same displayed terms as the crossing equation of
Proposition~\ref{prop:crossing-eq}. We prove that the Bessel root
approximates the actual crossing to relative error $O(L/N)$ in the
odds (Theorem~\ref{thm:bessel}(iii)), give explicit finite-$N$ bounds, and prove the bracket of
Theorem~\ref{thm:every-n}. The known Bessel laws also suggest the
$\log\log N$ term, the periodic shift by $\log2$, and the Gaussian
profile of Theorem~\ref{thm:critical-window}. Our bounds turn these heuristics into theorems about the exact crossings.

Stepanyan et~al.\@~\cite[Section~III.B]{stepanyan2024} study two
crossings of the periodic chain as thermal transitions. Their first
crossing compares the all-up configuration with magnetization $N-2$
(one flipped spin), and is exactly at $J=\frac14\log N$. Their second
compares all up with zero magnetization, and they estimate its
coupling. We give the second crossing with its constants for both
boundary conditions. For free ends we also bound it at every $N$ and
find the crossing of every bin. The Markov binomial estimates of
\v{C}ekanavi\v{c}ius and Mikalauskas assume ranges of transition
probabilities that exclude $p\to1$ in this symmetric model
(see~\cite[Theorems~1.1 to~1.3, pp.~220--221]{cekanavicius2001}
and~\cite[Theorems~1.1 to~1.3, pp.~308--309]{cekanavicius2001ld}).
Jensen~\cite[Section~2]{jensen2013} studies the relative error of a
saddlepoint approximation to the Markov binomial law, after removing the
exact endpoint masses. His stationary chain is our free chain with $n=N$,
$\alpha=q$ and $\beta=p$. For fixed $\alpha$ and $\beta$ the relative error
is $o(1/n)$ in a large deviation region~\cite[Proposition~1]{jensen2013},
and he also studies the boundary of the parameter space, including the
limit $\alpha\to0$ with $\alpha/\beta\to0$~\cite[Proposition~2 and
Eq.~(2.6)]{jensen2013}. These results do not give signed Bessel bounds or
an error estimate that tends to zero uniformly along $Nq\sim\log N$.

We prove relative
Bessel bounds that are explicit in $N$, in $u$ and in the bin
(Theorems~\ref{thm:bessel}, \ref{thm:allbin-bessel}
and~\ref{thm:periodic-bessel}), and that stay small when the mean number of
domain walls grows like $\log N$. We bracket the free crossing at every $N$
(Theorem~\ref{thm:every-n}). We expand the actual crossings at every growing distance $a$ from the edge,
uniformly over $b\ge a$ (Theorem~\ref{thm:allbin-expansion}), describe
the crossing window about the exact root (Theorem~\ref{thm:critical-window}),
and locate both crossings in a fixed field (Theorem~\ref{thm:field}).

\section{The exact law}\label{sec:exact}

The exact law and recurrence are classical, and so are the generating
function~\cite[Eq.~(3.1)]{gillis1955} and the fixed-$p$ normal
limit~\cite[Theorem~6]{konno2009}. The count of Theorem~\ref{thm:pmf} is
Ising's count of domains~\cite{ising1925}, and it appears
in~\cite[Eq.~(44)]{antal2004} and~\cite[Eqs.~(A15) to (A18)]{dantchev2022}.
The law also appears in~\cite[Proposition~4.1 and Eq.~(4.1)]{dekkingkong2011}, and
Garc\'{\i}a-Pelayo~\cite[Eq.~(14)]{garciapelayo2009} writes it in terms of ${}_2F_1$.
Denisov and H\"anggi~\cite[Eq.~(3.6)]{denisov2005} give the joint law of the
numbers of up spins, up domains and domain walls, and Kiumi, Konno and
Tamura~\cite[Proposition~3]{kiumi2022} write the return probability of the
walk, which is the middle bin for even $N$, with Legendre polynomials.
We prove Theorem~\ref{thm:pmf} because the later sections use its count of
words by switches. By \eqref{eq:wordprob}, a word with $j$ switches has
probability $\frac12p^{N-1-j}q^j$.

\begin{theorem}[bin probabilities]\label{thm:pmf}
Let $N\ge1$. Then $P_N(0)=P_N(N)=\tfrac12p^{N-1}$, and for $1\le k\le N-1$
\[
P_N(k)=\frac12\sum_{j=1}^{N-1}W(N,k,j)\,p^{N-1-j}q^{\,j},
\]
where $W(N,k,j)$ counts the words with $k$ letters $\sR$, $N-k$ letters $\sL$
and exactly $j$ switches. Writing $a=k$ and $b=N-k$, for every integer $i\ge1$
\[
W(N,k,2i-1)=2\binom{a-1}{i-1}\binom{b-1}{i-1},\qquad
W(N,k,2i)=\binom{a-1}{i}\binom{b-1}{i-1}+\binom{a-1}{i-1}\binom{b-1}{i}
\]
with $\binom nr=0$ for integers $n\ge0$ and $r$ outside $0\le r\le n$.
\end{theorem}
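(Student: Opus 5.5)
The plan is to group words by their switch count and count each class by decomposing a word into runs. The endpoint formula is immediate. Bin $0$ (resp.\ $N$) is reached only by the constant word $\sL\cdots\sL$ (resp.\ $\sR\cdots\sR$), which has no switches. By \eqref{eq:wordprob} its probability is therefore $\frac12p^{N-1}$.

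For $1\le k\le N-1$, both letters occur, so every word in bin $k$ has at least one switch, and $j=\chg(w)\le N-1$. Summing \eqref{eq:wordprob} over the words in bin $k$ and grouping them by $j$ gives
\[
P_N(k)=\tfrac12\sum_{j=1}^{N-1}W(N,k,j)\,p^{N-1-j}q^{j}
\]
with $W$ defined as in the statement. It remains to compute $W$.

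A word with $j$ switches is a concatenation of $j+1$ maximal runs whose letters alternate. Such a word is determined by three pieces of data: its first letter, a composition of $a$ into the $\sR$-run lengths, and a composition of $b$ into the $\sL$-run lengths. By stars and bars, the number of compositions of $n\ge1$ into $r\ge1$ positive parts is $\binom{n-1}{r-1}$.

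\emph{Odd $j=2i-1$.} There are $2i$ runs, $i$ of each letter, and either letter may come first. This gives $2\binom{a-1}{i-1}\binom{b-1}{i-1}$.

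\emph{Even $j=2i$.} There are $2i+1$ runs, and the first and last runs carry the same letter. If that letter is $\sR$, there are $i+1$ runs of $\sR$ and $i$ runs of $\sL$, giving $\binom{a-1}{i}\binom{b-1}{i-1}$. The case where $\sL$ starts is symmetric and gives $\binom{a-1}{i-1}\binom{b-1}{i}$.

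When the required number of runs of a letter exceeds its count, no such word exists. The convention $\binom nr=0$ for $r$ outside $[0,n]$ makes the formulas return $0$ in exactly those cases, because $a,b\ge1$ give $a-1,b-1\ge0$.

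The only real point to check is the bijection between words with a given first letter and switch count on one side, and pairs of compositions on the other. The inverse map reads off run lengths, and concatenating alternating runs produces exactly $j$ switches. So there is no genuine obstacle; the care needed is only in tracking the two parities of $j$ and the boundary cases of the binomials.
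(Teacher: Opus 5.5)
Your proposal is correct and follows essentially the same route as the paper. You sum the word probabilities grouped by switch count, decompose each word into alternating maximal runs, and count compositions of $a$ and $b$, with the factor $2$ (odd $j$) or the two summands (even $j$) coming from the choice of first letter; your explicit check that the binomial convention gives $0$ exactly for impossible run counts is a detail the paper leaves implicit.
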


\begin{proof}
By \eqref{eq:wordprob} it suffices to count the words in bin $k$ with $j$
switches. We count them by their run lengths. A word with $j$ switches
consists of $j+1$ maximal runs, alternately of letters $\sR$ and of letters
$\sL$. If $k\in\{0,N\}$ there is one word and it has no switch. Let $a,b\ge1$.
If $j+1=2i$ there are $i$ runs of each letter. The run lengths form a
composition of $a$ into $i$ parts and a composition of $b$ into $i$ parts, and
there are $\binom{a-1}{i-1}\binom{b-1}{i-1}$ such pairs. The two choices of the
first letter give the factor $2$. If
$j+1=2i+1$ the word starts and ends with the same letter. Starting with $\sR$
gives $i+1$ runs of $\sR$ and $i$ runs of $\sL$, so
$\binom{a-1}{i}\binom{b-1}{i-1}$ words, and starting with $\sL$ gives the
other summand.
\end{proof}

For instance $P_2=(\tfrac p2,\,q,\,\tfrac p2)$, $P_3(1)=\tfrac12(1-p^2)$ and
$P_4(2)=q(1-p+p^2)$. Each interior bin needs at least one switch, while the
endpoints need none. So the endpoints win when $p$ is close to $1$ (the walk
rarely turns), and
Section~\ref{sec:middle} asks how close.

\begin{proposition}[recurrence and generating function]\label{prop:gf}
Let $P_N^{\sR}(k)$ and $P_N^{\sL}(k)$ be the probabilities of being in bin $k$
after $N$ steps with last letter $\sR$, respectively $\sL$, and set them to $0$
outside $0\le k\le N$. Then $P_1^{\sR}(1)=P_1^{\sL}(0)=\tfrac12$,
$P_1^{\sR}(0)=P_1^{\sL}(1)=0$, and for $N\ge2$
\begin{equation}\label{eq:dp}
P_N^{\sR}(k)=p\,P_{N-1}^{\sR}(k-1)+q\,P_{N-1}^{\sL}(k-1),\qquad
P_N^{\sL}(k)=p\,P_{N-1}^{\sL}(k)+q\,P_{N-1}^{\sR}(k).
\end{equation}
Consequently, in formal variables $x$ and $t$,
\begin{equation}\label{eq:gf}
\sum_{N\ge0}\sum_{k=0}^{N}P_N(k)\,x^kt^N=\frac{1-(p-\tfrac12)(1+x)\,t}{1-p(1+x)\,t+(2p-1)\,x\,t^2},
\end{equation}
and for all $N\ge2$ and all $k$, with $P_M(k)=0$ outside $0\le k\le M$,
\begin{equation}\label{eq:rec}
P_N(k)=p\bigl(P_{N-1}(k)+P_{N-1}(k-1)\bigr)-(2p-1)\,P_{N-2}(k-1).
\end{equation}
\end{proposition}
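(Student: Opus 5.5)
The plan has three steps: derive \eqref{eq:dp} by conditioning on the last step, turn it into a two-by-two linear system for the generating functions, and then read off \eqref{eq:rec} from the denominator of \eqref{eq:gf}.

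\emph{Step 1: the recurrence \eqref{eq:dp}.} The initial values hold because the first letter is $\sR$ or $\sL$ with probability $\frac12$ each. For $N\ge2$, condition on the letter $w_{N-1}$. If $w_N=\sR$ and the walk is in bin $k$, then after $N-1$ steps it was in bin $k-1$. Its last letter was $\sR$ (repeat, factor $p$) or $\sL$ (switch, factor $q$). The $\sL$ equation is the same argument with the bin unchanged. The convention that the $P$'s vanish outside $0\le k\le N$ handles the boundary bins.

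\emph{Step 2: the generating function \eqref{eq:gf}.} Put
\[
A(x,t)=\sum_{N\ge1}\sum_k P_N^{\sR}(k)x^kt^N,\qquad B(x,t)=\sum_{N\ge1}\sum_k P_N^{\sL}(k)x^kt^N.
\]
Multiplying \eqref{eq:dp} by $x^kt^N$ and summing over $N\ge2$ gives
\[
A-\tfrac12xt=xt(pA+qB),\qquad B-\tfrac12t=t(pB+qA).
\]
This is a linear system with determinant
\[
(1-pxt)(1-pt)-q^2xt^2=1-p(1+x)t+(p^2-q^2)xt^2,
\]
and $p^2-q^2=2p-1$, which is the stated denominator. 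Solving by Cramer's rule, I expect the numerator of $A+B$ to be $\frac12t(1+x)-\frac12(p-q)xt^2(1+x)$. Adding the $N=0$ term $1$ (since $P_0(0)=1$) should simplify the total to $1-(p-\frac12)(1+x)t$ over the denominator. This is the one place where care is needed. Adding $1$ contributes the denominator itself, and the $t^2$ terms must cancel: $(2p-1)x$ against $-\frac12(2p-1)x(1+x)\cdot\ldots$. I will check the algebra by expanding both sides to order $t^2$ and comparing with $P_1=(\frac12,\frac12)$ and $P_2=(\frac p2,q,\frac p2)$. Since both sides are rational functions whose denominator is quadratic in $t$, agreement of the first few coefficients together with the linear recurrence settles any doubt.

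\emph{Step 3: the three-term recurrence \eqref{eq:rec}.} Multiply \eqref{eq:gf} through by its denominator. The numerator has degree $1$ in $t$, so for $N\ge2$ the coefficient of $x^kt^N$ on the left-hand side vanishes. That coefficient is
\[
P_N(k)-p\bigl(P_{N-1}(k)+P_{N-1}(k-1)\bigr)+(2p-1)P_{N-2}(k-1),
\]
which gives \eqref{eq:rec}, including its boundary cases, under the zero convention.

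Alternatively, \eqref{eq:rec} can be obtained directly by eliminating $P^{\sR},P^{\sL}$ from \eqref{eq:dp}. The only real obstacle in either route is the bookkeeping of the $N=0$ and $N=1$ terms, so that the numerator comes out as stated.
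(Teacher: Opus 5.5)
Your three steps are the paper's proof. You condition on $w_{N-1}$, solve the $2\times2$ linear system for the two generating functions and add $1$ for $N=0$, and then multiply \eqref{eq:gf} by its denominator and compare coefficients of $x^kt^N$ for $N\ge2$.

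The one concrete error is your predicted numerator. With $D=1-p(1+x)t+(2p-1)xt^2$, Cramer's rule gives $A=\frac12xt\,(1-(2p-1)t)/D$ and $B=\frac12t\,(1-(2p-1)xt)/D$. So the numerator of $A+B$ is $\frac12(1+x)t-(2p-1)xt^2$, not $\frac12(1+x)t-\frac12(2p-1)x(1+x)t^2$. With the correct numerator, the $t^2$ terms cancel exactly when you add $1=D/D$, leaving $1-(p-\frac12)(1+x)t$. With yours, a residual $\frac12(2p-1)x(1-x)t^2$ would remain, so the simplification you claim would fail.

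Your check through order $t^2$ would catch this, and it is a complete check without the recurrence. Appealing to \eqref{eq:rec} there would be circular, since your Step~3 derives \eqref{eq:rec} from \eqref{eq:gf}. Cramer's rule already shows that $D\,(1+A+B)$ is a polynomial of degree at most $2$ in $t$, so it is determined by $P_0$, $P_1$ and $P_2$.
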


\begin{proof}
The recursion \eqref{eq:dp} follows by conditioning on $w_{N-1}$, since $w_N$
equals $w_{N-1}$ with probability $p$.
Let $H^{\sR}=\sum_{N,k}P_N^{\sR}(k)x^kt^N$ and define $H^{\sL}$ alike. By
\eqref{eq:dp}, $H^{\sR}=xt(\tfrac12+pH^{\sR}+qH^{\sL})$ and
$H^{\sL}=t(\tfrac12+pH^{\sL}+qH^{\sR})$. Solving these two linear equations
and adding $1$ for $N=0$ gives \eqref{eq:gf}, which is Gillis's generating
function~\cite[Eq.~(3.1)]{gillis1955} after changing from position to bin number. Multiplying
\eqref{eq:gf} by its denominator and comparing the coefficients of $x^kt^N$
with $N\ge2$ gives \eqref{eq:rec}.
\end{proof}

\begin{remark}[integer numerators]\label{rem:tables}
For $p=\alpha/(\alpha+\beta)$ with integers $\alpha,\beta\ge0$, not both $0$,
put $n_N(k)=2(\alpha+\beta)^{N-1}P_N(k)$ for $N\ge1$. By
\eqref{eq:wordprob}, $n_N(k)$ is the sum of $\alpha^{\rep(w)}\beta^{\chg(w)}$
over the words with $k$ letters $\sR$ and $N-k$ letters $\sL$, which shows that
it is an integer.
\end{remark}

\subsection{Variance and the normal limit}\label{sec:clt}

The bin $K_N$ counts the visits of the chain $w_1,\dots,w_N$ to the letter
$\sR$. This chain leaves each letter with probability $q$ and starts from its
stationary law $(\frac12,\frac12)$. So $K_N$ has the Markov binomial law
$\mathrm{Bin}(N,q,q,(\frac12,\frac12))$ of Dekking and
Kong~\cite{dekkingkong2011}. The transition matrix of the chain has second
eigenvalue $1-2q=2p-1$.

\begin{proposition}[mean and variance]\label{prop:var}
Let $0<p<1$ and $\lambda=2p-1$. Then $\E K_N=N/2$ and
\[
\Var K_N=\frac14\Bigl(N\,\frac{1+\lambda}{1-\lambda}-\frac{2\lambda\,(1-\lambda^N)}{(1-\lambda)^2}\Bigr)
=\frac{Np}{4(1-p)}-\frac{(2p-1)\bigl(1-(2p-1)^N\bigr)}{8(1-p)^2}.
\]
\end{proposition}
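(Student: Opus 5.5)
Write $K_N=\sum_{i=1}^N\eta_i$ with $\eta_i=(1+\sigma_i)/2$, the indicator that $w_i=\sR$, and compute the first two moments from the covariances of the stationary two-state chain. The first letter is fair, and the transition matrix preserves the law $(\frac12,\frac12)$, so every $w_i$ is uniform on $\{\sR,\sL\}$. Hence $\E\eta_i=\frac12$, and linearity gives $\E K_N=N/2$ at once; alternatively this follows from the symmetry $\sR\leftrightarrow\sL$.

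For the covariances we need $\E\sigma_i\sigma_j$. Given $\sigma_i$, one step multiplies the conditional mean of the next spin by $p-q=\lambda$, since $\E[\sigma_{i+1}\mid\sigma_i]=p\sigma_i-q\sigma_i$. By the Markov property, induction gives $\E[\sigma_j\mid\sigma_i]=\lambda^{j-i}\sigma_i$ for $j\ge i$. Therefore $\E\sigma_i\sigma_j=\lambda^{|i-j|}$, and since $\E\sigma_i=0$ we get $\operatorname{Cov}(\eta_i,\eta_j)=\frac14\lambda^{|i-j|}$. This reproduces the second eigenvalue $2p-1$ mentioned just before the statement.

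Summing over all pairs,
\[
\Var K_N=\frac14\sum_{i,j=1}^N\lambda^{|i-j|}=\frac14\Bigl(N+2\sum_{d=1}^{N-1}(N-d)\lambda^d\Bigr).
\]
The only real step is evaluating the arithmetico-geometric sum in closed form. Since $0<p<1$ gives $|\lambda|<1$ and $\lambda\ne1$, I would use
\[
\sum_{d=1}^{N-1}(N-d)\lambda^d=\frac{N\lambda}{1-\lambda}-\frac{\lambda(1-\lambda^N)}{(1-\lambda)^2}.
\]
This can be checked by induction on $N$, or by differentiating the geometric series. Substituting and using $1+2\lambda/(1-\lambda)=(1+\lambda)/(1-\lambda)$ gives the first displayed form. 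The second form follows from the substitutions $1-\lambda=2(1-p)$ and $1+\lambda=2p$, which give
\[
\frac{1+\lambda}{1-\lambda}=\frac{p}{1-p},\qquad \frac{2\lambda}{(1-\lambda)^2}=\frac{2p-1}{2(1-p)^2},
\]
and then multiplying by $\frac14$.

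The main obstacle is only bookkeeping in the closed-form sum. As a sanity check I would test $N=1$, where the variance is $\frac14$, and $N=2$, where $P_2=(\frac p2,q,\frac p2)$ gives variance $p/2$.
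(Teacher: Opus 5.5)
Your proof is correct, but it takes a different route from the paper. The paper computes nothing itself. It quotes Dekking and Kong's variance formula for the Markov binomial law (their Eq.~(2.3) and Proposition~2.1), with both switching probabilities set to $q$ and the stationary start. Then both excentricity terms vanish and both stationary weights are $\frac12$, and it rewrites the result using $(1+\lambda)/(1-\lambda)=p/q$ and $1-\lambda=2q$.

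You instead derive the formula from the two-point function $\E\sigma_i\sigma_j=\lambda^{|i-j|}$ and an arithmetico-geometric sum. All your steps check out: the conditional mean $\E[\sigma_{i+1}\mid\sigma_i]=\lambda\sigma_i$, the double sum, the closed form of $\sum_{d=1}^{N-1}(N-d)\lambda^d$, and the two rewritings.

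The citation is shorter and places the result in the general Markov binomial setting, with arbitrary switching probabilities and initial law. Your computation is self-contained and shows the structure behind the formula. Since $\lambda=2p-1=(1-u)/(1+u)=\tanh J$, your covariance identity is exactly the free-chain correlation $\langle\sigma_i\sigma_{i+r}\rangle=\tanh^rJ$ mentioned in Section~\ref{sec:verify}, and the variance is $\frac14$ times its double sum.

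One slip is in your proposed sanity check. For $N=2$, $K_2-1$ equals $\pm1$ with total probability $p$ and $0$ with probability $q$, so $\Var K_2=p$, not $p/2$. The formula gives $\frac14(2+2\lambda)=p$, so the check passes once the target value is corrected. As written, it would have looked like a failure.
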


\begin{proof}
This is \cite[Eq.~(2.3) and Proposition~2.1]{dekkingkong2011} with both of
their switching probabilities equal to $q$
and the stationary start, for which both excentricities vanish and both
stationary weights are $\frac12$. The second form follows from
$(1+\lambda)/(1-\lambda)=p/q$ and $1-\lambda=2q$.
\end{proof}

\begin{theorem}[normal limit]\label{thm:clt}
Let $0<p<1$ be fixed. Then $(K_N-N/2)/\sqrt N$ converges in distribution, as
$N\to\infty$, to the normal law with mean $0$ and variance $p/(4(1-p))$.
\end{theorem}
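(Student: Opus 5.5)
A convenient route is a central limit theorem for additive functionals of a finite ergodic Markov chain, with the variance taken from Proposition~\ref{prop:var}. Write $K_N-N/2=\frac12\sum_{i=1}^N\sigma_i$, where $(\sigma_i)$ is the stationary two-state chain with flip probability $q$. Its transition matrix has eigenvalues $1$ and $\lambda=2p-1$, and $|\lambda|<1$ because $p$ is fixed in $(0,1)$. So the chain is uniformly ergodic, and we have the exact covariances $\E\sigma_i\sigma_{i+k}=\lambda^k$. By Proposition~\ref{prop:var},
\[
\Var\bigl((K_N-N/2)/\sqrt N\bigr)\to\frac{p}{4(1-p)},
\]
since the second term there is $O(1)$ for fixed $p$. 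Hence the limit law has the claimed variance, provided we show it is normal.

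I would prove normality by regeneration, which is elementary here. The switch indicators $\xi_i=\mathbf 1[w_{i+1}\ne w_i]$ are i.i.d.\ Bernoulli$(q)$. Equivalently, the successive run lengths $R_1,R_2,\dots$ are i.i.d.\ geometric with mean $1/q$, except that the last run is truncated at $N$. The signs of the runs alternate, starting from a fair sign. Group the runs in consecutive pairs. Pair $m$ then contributes $D_m=\pm(R_{2m-1}-R_{2m})$ to $\sum\sigma_i$, with one common sign fixed by $w_1$, and it has length $T_m=R_{2m-1}+R_{2m}$. The pairs $(D_m,T_m)$ are i.i.d., with $\E D_m=0$ and all moments finite.

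Let $\nu_N$ be the number of complete pairs up to time $N$. The renewal law of large numbers gives $\nu_N/N\to q/2$ almost surely. Anscombe's theorem, or the CLT for randomly stopped sums, then gives
\[
\frac{1}{\sqrt N}\sum_{m\le\nu_N}D_m\Rightarrow \mathcal N\bigl(0,\tfrac q2\Var D_1\bigr).
\]
The incomplete final block contributes at most the length of a pair that straddles time $N$. This is $O_P(1)$ because geometric tails are uniform, so it vanishes after dividing by $\sqrt N$.

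It remains to identify the variance. We have $\Var D_1=2\Var R=2p/q^2$, so
\[
\frac q2\cdot\frac{2p}{q^2}=\frac pq
\]
for $\sum\sigma_i$, which is $p/(4q)$ for $K_N-N/2$. This matches Proposition~\ref{prop:var}. The limit is therefore $\mathcal N(0,p/(4(1-p)))$.

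An alternative is the generating function \eqref{eq:gf}. Setting $x=e^{\theta}$, the denominator has a dominant root $t_1(\theta)$, analytic near $\theta=0$, and the quasi-powers theorem gives the Gaussian limit. The variance is again read off from $\log(1/t_1)$.

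The only delicate step is justifying the random-index CLT and the negligibility of the boundary block. Anscombe's condition holds because the $D_m$ are i.i.d.\ with finite variance and $\nu_N/N$ converges to a constant. So I expect no real obstacle; the result is also the cited \cite[Theorem~6]{konno2009}.
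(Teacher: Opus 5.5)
Your proof is correct, but it takes a different route from the paper's.

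The paper argues analytically. By \eqref{eq:dp} the vector of $\E[e^{i\theta X_N};\sigma_N=\pm1]$ is propagated by a $2\times2$ transfer matrix $T_\theta$. Its simple dominant eigenvalue satisfies $\lambda_+(\theta)=1-\frac p{2q}\theta^2+O(\theta^4)$, so $\E e^{isX_N/\sqrt N}\to e^{-ps^2/(2q)}$, and L\'evy's continuity theorem finishes the proof.

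You instead use the regenerative structure of the walk. The switch indicators are i.i.d.\ Bernoulli$(q)$, so the run lengths are i.i.d.\ geometric. Pairs of runs then give i.i.d.\ mean-zero rewards, and the renewal--reward CLT (Anscombe, plus tightness of the straddling pair) gives the limit. The pairs $(D_m,T_m)$ are indeed i.i.d.\ despite the common sign $\sigma_1$, because $R_{2m-1}-R_{2m}$ is symmetric given $T_m$; you could also simply condition on $\sigma_1$.

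What each approach buys:
\begin{itemize}
\item Your route is purely probabilistic. It shows the variance directly as $\Var(R_1-R_2)/\E(R_1+R_2)=p/q$, with no eigenvalue perturbation and no eigenvector bookkeeping.
\item The paper's route is a few lines once the recursion is in place. It is the generic transfer-matrix argument, so it carries over to other chains (a field, more states) by expanding a dominant eigenvalue.
\end{itemize}
Your quasi-powers alternative is essentially the paper's argument, since the denominator of \eqref{eq:gf} is $\det(I-tM(x))$ for the transfer matrix $M(x)$ of \eqref{eq:dp}.

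One small point: your first paragraph, which passes the variance of Proposition~\ref{prop:var} to the limit, would need uniform integrability of $(K_N-N/2)^2/N$. You do not need it, because the renewal computation already identifies the variance.
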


\begin{proof}
By \eqref{eq:dp}, the vector $v_N(\theta)$ with entries
$\E[e^{i\theta X_N};\sigma_N=\pm1]$ satisfies $v_{N+1}=T_\theta v_N$, where $T_\theta$ has rows
$(pe^{i\theta},qe^{i\theta})$ and $(qe^{-i\theta},pe^{-i\theta})$. The
eigenvalues of $T_\theta$ at $\theta=0$ are $1$ and $2p-1$, so for small $|\theta|$ they
stay simple, with
$\lambda_+(\theta)=p\cos\theta+\sqrt{q^2-p^2\sin^2\theta}=1-\frac p{2q}\theta^2+O(\theta^4)$
and $\lambda_-$ near $2p-1$. Since $v_1(0)$ is an eigenvector for the
eigenvalue $1$, the component of $v_1(\theta)$ along the eigenvector of
$\lambda_+(\theta)$ tends to $1$ as $\theta\to0$, the term with $\lambda_-^N$ tends
to $0$ since $|\lambda_-|<1$, and at
$\theta=s/\sqrt N$ we get $\E e^{i\theta X_N}\to\exp(-\frac p{2q}s^2)$. L\'evy's
continuity theorem~\cite[Theorem~26.3]{billingsley1995} gives the normal
limit of $X_N/\sqrt N$ with variance $p/q$, and $K_N-N/2=X_N/2$.
\end{proof}

Compared with the simple random walk, persistence multiplies the width of the
bell by $\sqrt{p/q}$, so it widens the bell for $p>\frac12$ and narrows it for
$p<\frac12$.

\section{The central crossing at every \texorpdfstring{$N$}{N}}\label{sec:middle}

In this section the chain has free ends. Let $N\ge2$, let $m_N=\lfloor N/2\rfloor$
and let $p>0$. By Theorem~\ref{thm:pmf} the ratio of the middle bin to an
endpoint is
\begin{equation}\label{eq:SN}
\frac{P_N(m_N)}{P_N(0)}=S_N(u)=\sum_{j=1}^{N-1}W(N,m_N,j)\,u^j ,
\end{equation}
a polynomial in $u$ with nonnegative integer coefficients and
$W(N,m_N,1)=2$. The middle and the endpoints are level exactly when
$S_N(u)=1$.

We first explain heuristically why the two are level when $\tau$ is near $\log N$.
Let $a=m_N$ and
$b=N-m_N$, both about $N/2$. A word in the middle bin with $2r+1$ switches has
$r+1$ runs of each letter, and there are
$2\binom{a-1}r\binom{b-1}r\approx2(N/2)^{2r}/(r!)^2$ of them when $r$ is small
compared with $\sqrt N$. So these words contribute about
$2u\sum_r(Nu/2)^{2r}/(r!)^2=2uI_0(Nu)$ to $S_N(u)$, and in the same way the
words with an even number of switches contribute about $2uI_1(Nu)$. For large
$w=Nu$ we have $I_0(w)+I_1(w)\approx2e^w/\sqrt{2\pi w}$, so
$S_N(u)\approx4we^w/(N\sqrt{2\pi w})$. Setting this equal to $1$ gives
$w+\frac12\log w\approx L+\frac12\log(\pi/8)$. So at the crossing $w$, and with
it $\tau$, is close to $L-\frac12\log L+c_0$ with $c_0=\frac12\log(\pi/8)$.

\begin{theorem}\label{thm:root}
\begin{enumerate}[(a)]
\item For every $N\ge2$ there is exactly one $p=p_N\in[0,1]$ with
$P_N(m_N)=P_N(0)$, and $p_N\ge\tfrac23$. The middle is more likely than the
endpoints for $p<p_N$ and less likely for $p>p_N$.
\item The first values are $p_2=\tfrac23$, $p_3=1/\sqrt2$ and the real root
$p_4$ of $3p^3-4p^2+4p-2$, and $p_N<p_{N+1}$ for all $N\ge2$.
\item As $N\to\infty$ we have $p_N\to1$, and more precisely
$1-p_N\sim(\log N)/N$.
\end{enumerate}
\end{theorem}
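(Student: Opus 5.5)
Part (a) comes down to monotonicity of the polynomial $S_N$. By \eqref{eq:SN}, $P_N(m_N)/P_N(0)=S_N(u)$ for $0<p\le1$. Here $S_N$ is a polynomial in $u$ with nonnegative coefficients and $S_N(0)=0$, and its linear coefficient is $W(N,m_N,1)=2$. So $S_N$ is strictly increasing on $[0,\infty)$ and runs from $0$ to $\infty$, and there is a unique $u_N>0$ with $S_N(u_N)=1$. Since $u=q/p$ is strictly decreasing in $p\in(0,1]$, the middle is more likely than the endpoints exactly when $u>u_N$, that is, when $p<p_N=1/(1+u_N)$. At $p=0$ the endpoints have probability $0$; for $N\ge2$ the middle bin has positive probability there (the alternating word, with $q=1$), so $p=0$ is not a root. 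This gives uniqueness in $[0,1]$. For $p_N\ge\frac23$, i.e.\ $u_N\le\frac12$, it suffices to show $S_N(\frac12)\ge1$, and already the one-switch term gives $2\cdot\frac12=1$. Equality holds only when $N=2$, so $p_N>\frac23$ for $N\ge3$.

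For (b), the values for $N=2,3$ are the computations in the introduction. For $N=4$, I expand $P_4(2)=q(1-p+p^2)$ and $P_4(0)=p^3/2$. The condition $2(1-p)(1-p+p^2)=p^3$ gives $2-4p+4p^2-2p^3=p^3$, i.e.\ $3p^3-4p^2+4p-2=0$. This cubic has derivative $9p^2-8p+4>0$, so its real root is unique. For monotonicity in $N$ it suffices, by (a), to show $S_{N+1}(u_N)>1$, i.e.\ $S_{N+1}(u)>S_N(u)$ for all $u>0$. I plan to compare coefficients via Theorem~\ref{thm:pmf}. Passing from $N$ to $N+1$ raises $b-1$ or $a-1$ by one (from $(m,m)$ to $(m,m+1)$, or from $(m,m+1)$ to $(m+1,m+1)$). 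Each coefficient $W$ is a sum of products of binomials that are nondecreasing in their upper index, so $W(N+1,m_{N+1},j)\ge W(N,m_N,j)$ for every $j$. The coefficient of $u^3$ strictly increases once $N\ge3$, and at $N=2$ the $u^2$ coefficient strictly increases. So the inequality is strict for $u>0$. This coefficient bookkeeping is the step I would check most carefully, in particular the even-$j$ terms at the boundary cases $i=1$ and small $N$.

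For (c), I would use the heuristic at the start of this section, made one-sided so that no uniform error bound is needed.

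\emph{Lower bound on $S_N$.} Keep only the odd terms $2u\binom{a-1}{r}^2\ge2u\,\bigl((a-r)^r/r!\bigr)^2$ with $a=m_N$. Take $u=C(\log N)/N$ with $C>1$ fixed and $r=\lfloor Nu/2\rfloor\approx\frac C2\log N$, so that $r=o(\sqrt N)$ and $(1-r/a)^{2r}\to1$. By Stirling's formula this single term is about $2u\,e^{Nu}/(\pi Nu)=N^{C-1+o(1)}\to\infty$. Hence $S_N(u)>1$ for large $N$, and $u_N\le C\log N/N$.

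\emph{Upper bound on $S_N$.} Use $\binom{a-1}{i}\le a^i/i!$ with $a,b\le N/2+1$. Then $S_N(u)\le 2u\bigl(I_0(2y)+I_1(2y)\bigr)$ with $y=(N/2+1)u$, up to harmless constants. For $u=c\log N/N$ with $c<1$ this is $O\bigl(u\,e^{(1+o(1))c\log N}\bigr)=N^{c-1+o(1)}\to0$, so $u_N\ge c\log N/N$.

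Since $C>1>c$ were arbitrary, $u_N\sim\log N/N$, and then $1-p_N=u_N/(1+u_N)\sim\log N/N$, with $p_N\to1$. The main care needed is the Stirling estimate in the lower bound, where $r$ must be kept small relative to $\sqrt N$. Alternatively one could quote Theorem~\ref{thm:bessel} if it were available earlier, but I would keep this argument self-contained.
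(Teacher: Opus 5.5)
Your proposal is correct. Parts (a) and (b) follow the paper's argument. One simplification is available in (b): the $u^2$ coefficient $W(N,m_N,2)=(a-1)+(b-1)=N-2$ increases strictly at every step. That gives strictness for all $N\ge2$ at once, so you need neither the case split nor the check of the even-$j$ boundary terms you flag.

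For (c) you take a genuinely different route. The paper proves (c) inside the proof of Theorem~\ref{thm:bessel}, using the two-sided bound $F(x)\bigl(1-x(x+1)/h\bigr)\le hS_N(x/h)\le F(x)$ of part (i). The upper side is used with $F(x)\le2xe^{2x}$ at $x_-=(1-\theta)L/2$. The lower side is used at $x_+=(1+\theta)L/2$, with $F(x_+)\ge2x_+I_0(2x_+)$, the large-argument form of $I_0$, and $x_+(x_++1)/h\to0$.

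Your upper bound is in effect the upper half of that inequality: $\binom{n}{r}\le n^r/r!$ says the clipped products are at most $1$. You work at the slightly larger scale $y=(N/2+1)u$, and the bound holds with constant $1$, namely $S_N(u)\le2u\bigl(I_0(2y)+I_1(2y)\bigr)\le2ue^{2y}$. So no ``harmless constants'' are needed.

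Your lower bound is where you really differ. Instead of controlling the whole series, you keep the single peak term $2\binom{a-1}{r}\binom{b-1}{r}u^{2r+1}$ with $r=\lfloor Nu/2\rfloor$. Your display drops the factor $u^{2r}$, although your estimate uses it. Stirling together with $(1-r/a)^{2r}\to1$ then finishes the argument. The explicit bound $x^r/r!\ge e^{x-2}/\sqrt x$ for $r=\lfloor x\rfloor\ge1$, which follows from $r!\le e\,r^{r+1/2}e^{-r}$, removes any asymptotic bookkeeping there.

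Your route is more elementary and self-contained. It needs neither Lemma~\ref{lem:clipped} nor Bessel asymptotics, and it suffices for $u_N\sim\log N/N$. The paper's route costs nothing extra, because Theorem~\ref{thm:bessel}(i) is needed anyway. That bound also places $u_N$ within $O(L^2/N^2)$ of the Bessel root (Theorem~\ref{thm:bessel}(ii)--(iii)) and underlies Theorem~\ref{thm:every-n}. A single-term estimate cannot reach those results, since it loses a factor of order $\sqrt{\log N}$ against the full sum.
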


\begin{proof}
(a) At $p=0$ the endpoints have probability $0$ and the middle does not. For
$p>0$ the equation is $S_N(u)=1$ by \eqref{eq:SN}. The coefficients of $S_N$
are nonnegative and the linear one is $2$, so $S_N$ is strictly increasing on
$[0,\infty)$ with $S_N(0)=0$ and $S_N(u)\ge2u$. By the intermediate value
theorem there is exactly one root $u_N\in(0,\tfrac12]$. The map
$u\mapsto p=1/(1+u)$ is a decreasing bijection from $[0,\infty)$ to $(0,1]$.
Hence $p_N=1/(1+u_N)\ge\tfrac23$, and the middle wins exactly when $u>u_N$.

(b) By Theorem~\ref{thm:pmf}, $S_2=2u$, $S_3=2u+u^2$ and $S_4=2u+2u^2+2u^3$,
which give the three values. For $p_4$,
substitute $u=(1-p)/p$ in $S_4(u)=1$. For the monotonicity we compare
coefficients. Going from $N$ to $N+1$, one entry of the middle pair
$(a,b)=(\lfloor N/2\rfloor,\lceil N/2\rceil)$ grows by one and the other is
unchanged. The formulas of Theorem~\ref{thm:pmf} are symmetric in $(a,b)$, and
every $\binom{a-1}{\cdot}$ is nondecreasing in $a$. So
$W(N+1,m_{N+1},j)\ge W(N,m_N,j)$ for every $j$. The coefficient of $u^2$ is
$a+b-2=N-2$, which increases strictly. Hence $S_{N+1}(u)>S_N(u)$ for $u>0$, so
$u_{N+1}<u_N$.

(c) We prove (c) inside the proof of Theorem~\ref{thm:bessel}, using only
part~(i) of that theorem.
\end{proof}

Figure~\ref{fig:row10} shows the law of a chain of nine spins below, at and
above its crossing.

\begin{figure}[t]
\centering
\includegraphics{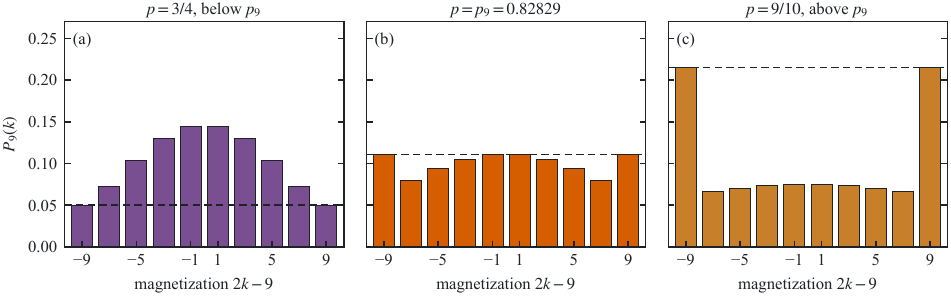}
\caption{The bin probabilities $P_9(k)$ of a chain of nine spins against the
magnetization $2k-9$, at $p=\frac34$ (a), at the crossing
$p_9=0.82829\ldots$ (b) and at $p=\frac9{10}$ (c), on a common vertical scale. The dashed line is the
endpoint height. At $p=p_9$ the two central bins, with magnetization $-1$ and
$1$, are exactly as likely as each endpoint.}
\label{fig:row10}
\end{figure}

\subsection{A Bessel bound at every \texorpdfstring{$N$}{N}}

The heuristic above replaced the number $\binom{a-1}{r}$ of ways to split $a$
letters into $r+1$ runs by $(N/2)^r/r!$. After this change the run count is
exactly the power series of $I_0$ and $I_1$. The next lemma bounds the error
this makes.

\begin{lemma}[clipped products]\label{lem:clipped}
(a) If $t_1,\dots,t_r\ge0$ and $(v)_+=\max(v,0)$, then
$0\le1-\prod_{k=1}^r(1-t_k)_+\le\sum_{k=1}^rt_k$.
(b) For integers $m\ge1$ and $r\ge0$,
$U_r(m):=\frac{r!}{m^r}\binom{m-1}{r}=\prod_{k=1}^r\bigl(1-\frac km\bigr)_+$,
so $0\le1-U_r(m)\le\frac{r(r+1)}{2m}$.
(c) For $x\ge0$,
\begin{gather*}
\sum_{r\ge0}\frac{r(r+1)x^{2r}}{(r!)^2}=x^2I_0(2x)+xI_1(2x),\\
\sum_{r\ge0}\frac{r(r+1)x^{2r+1}}{r!(r+1)!}=x^2I_1(2x),\qquad
\sum_{r\ge0}\frac{(r+1)x^{2r+1}}{r!(r+1)!}=xI_0(2x).
\end{gather*}
\end{lemma}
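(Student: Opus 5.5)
Each of the three parts is elementary, so the plan is to prove them in order, with (a) feeding into (b).

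For (a) I would use induction on $r$. The case $r=0$ is trivial: the empty product is $1$ and the sum is $0$. The lower bound holds because each factor $(1-t_k)_+$ lies in $[0,1]$: it is nonnegative by clipping and at most $1$ because $t_k\ge0$. So the product lies in $[0,1]$.

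For the upper bound, write $\Pi_r$ for the product. Then
\[
1-\Pi_r=(1-\Pi_{r-1})+\Pi_{r-1}\bigl(1-(1-t_r)_+\bigr).
\]
Since $0\le\Pi_{r-1}\le1$ and $0\le1-(1-t_r)_+=\min(t_r,1)\le t_r$, we get $1-\Pi_r\le(1-\Pi_{r-1})+t_r$. The induction then closes.

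For (b) I would expand
\[
\binom{m-1}{r}=\frac{(m-1)(m-2)\cdots(m-r)}{r!},
\]
so that
\[
\frac{r!}{m^r}\binom{m-1}{r}=\prod_{k=1}^r\Bigl(1-\frac km\Bigr).
\]
This needs care only when $r\ge m$. In that case the binomial coefficient is $0$ by the paper's convention. Also the factor with $k=m\le r$ is $(1-1)_+=0$, so both sides vanish and the clipping changes nothing. When $r<m$, every factor is positive and clipping is irrelevant. Applying (a) with $t_k=k/m$ gives the bound $\sum_{k\le r}k/m=r(r+1)/(2m)$.

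For (c) I would differentiate the series of Section~\ref{sec:setting} term by term, which is legitimate since they are entire. The third identity is the most direct:
\[
\frac{(r+1)x^{2r+1}}{r!(r+1)!}=\frac{x\cdot x^{2r}}{(r!)^2},
\]
and summing over $r$ gives $xI_0(2x)$.

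For the second identity, the term with $r=0$ vanishes. For $r\ge1$,
\[
\frac{r(r+1)}{r!(r+1)!}=\frac1{(r-1)!\,r!}.
\]
Shifting $r\mapsto r+1$ gives $\sum_{s\ge0}x^{2s+3}/(s!(s+1)!)=x^2I_1(2x)$.

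For the first identity, I would split $r(r+1)=r^2+r$ and shift indices in each piece. From $r/(r!)^2=1/((r-1)!\,r!)$, the $r$-part equals $\sum_{s}x^{2s+2}/(s!(s+1)!)=xI_1(2x)$. For the $r^2$-part, $r^2/(r!)^2=1/((r-1)!)^2$, which gives $\sum_s x^{2s+2}/(s!)^2=x^2I_0(2x)$. Adding the two pieces gives the claimed $x^2I_0(2x)+xI_1(2x)$.

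None of these steps is a real obstacle. The only point needing care is the clipping convention in (b) when $r\ge m$, and matching it with the convention $\binom nr=0$. The index shifts in (c) are routine once the factorials are cancelled as above.
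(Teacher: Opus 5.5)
Your proof is correct and follows the paper's argument. In (a) your induction handles the clipping directly through $1-(1-t_r)_+=\min(t_r,1)$, whereas the paper first sets aside the case where some $t_k\ge1$ and then applies the Weierstrass product inequality; (b) and (c) match the paper step for step, and the mention of term-by-term differentiation in (c) is unnecessary since you only cancel factorials and shift indices.
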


\begin{proof}
(a) Every factor lies in $[0,1]$, so the product is at most $1$. If some
$t_k\ge1$ the product is $0$ and the sum is at least $1$. Otherwise
$\prod_k(1-t_k)\ge1-\sum_kt_k$ by induction on $r$ (the Weierstrass
product inequality). (b) Note that
$\frac{r!}{m^r}\binom{m-1}{r}=\prod_{k=1}^r\frac{m-k}m$. For $r<m$ every factor
is positive, and for $r\ge m$ both sides vanish, the right side through the
factor $k=m$. The bound is (a) with $t_k=k/m$. (c) Use the series of $I_0(2x)$
and $I_1(2x)$ from Section~\ref{sec:setting}. Write $r(r+1)=r^2+r$ in the
first sum, cancel the factors $r$ and $r+1$ against the factorials, and shift
the index.
\end{proof}

\begin{theorem}[uniform Bessel approximation]\label{thm:bessel}
Put $h=N/2$ and $F(x)=2x\bigl(I_0(2x)+I_1(2x)\bigr)$, and recall that
$u_N=(1-p_N)/p_N$.
\begin{enumerate}[(i)]
\item For every $N\ge2$ and $x>0$,
\begin{equation}\label{eq:uniformbessel}
0\le1-\frac{hS_N(x/h)}{F(x)}\le\frac{x(x+1)}h.
\end{equation}
\item Let $x_N=hu_N$, let $x_B$ be the unique positive root of $F(x_B)=h$, and
put $\delta_N=x_N(x_N+1)/h$. Then $x_B\le x_N$, and
$x_N-x_B\le-\frac12\log(1-\delta_N)$ whenever $\delta_N<1$.
\item With $u_B=x_B/h$ we have $0\le u_N-u_B=O(L^2/N^2)$ as $N\to\infty$.
\end{enumerate}
\end{theorem}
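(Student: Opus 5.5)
The plan is to prove (i) by comparing $hS_N(x/h)$ with the series of $F$ term by term. We write the coefficients of Theorem~\ref{thm:pmf} through the clipped products of Lemma~\ref{lem:clipped}(b) and then sum the errors with Lemma~\ref{lem:clipped}(c). Parts (ii) and (iii) will follow from (i), from the monotonicity of $S_N$ and $F$, and from a lower bound on the logarithmic derivative of $F$.

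\emph{Part (i).} Let $a=m_N$ and $b=N-m_N$, so that $a+b=2h$ and $ab\le h^2$, with equality for even $N$.
\begin{itemize}
\item The words with $2r+1$ switches contribute
\[
h\cdot 2\binom{a-1}{r}\binom{b-1}{r}\Bigl(\frac xh\Bigr)^{2r+1}
=\frac{2x^{2r+1}}{(r!)^2}\prod_{k=1}^r\frac{(a-k)_+(b-k)_+}{h^2}.
\]
The Bessel counterpart of this term is $2x\cdot x^{2r}/(r!)^2$, and these counterparts sum to $2xI_0(2x)$.
\item Since $(a-k)(b-k)=ab-2hk+k^2$ and $ab\ge h^2-\frac14$, each factor is at least $1-t_k$ with $t_k=(2hk+\frac14-k^2)/h^2\le 2k/h$ for $k\ge1$. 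The factors are also at most $1$.
\item By Lemma~\ref{lem:clipped}(a), each odd term therefore lies between $(1-r(r+1)/h)$ times its Bessel counterpart and the counterpart itself. One point needs checking: the product of the clipped pair factors is bounded below by the clipped product of the $(1-t_k)$. This holds because each pair factor is $\ge(1-t_k)_+$ and lies in $[0,1]$.
\item The words with $2r+2$ switches give two summands, $\binom{a-1}{r+1}\binom{b-1}{r}$ and its mirror image. Their sum has prefactor $(a+b)(ab)^r/h^{2r+1}\cdot x^{2r+2}/(r!(r+1)!)=2x\cdot x^{2r+1}/(r!(r+1)!)$ up to clipped products. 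These are the terms of $2xI_1(2x)$.
\item For the even terms the clipped deficits have total at most $(r+1)^2/h$, by the same computation. This uses the products up to $r+1$ on one side and up to $r$ on the other.
\end{itemize}
Summing the deficits with Lemma~\ref{lem:clipped}(c) gives the total error
\[
\frac{2x}{h}\bigl(x^2I_0+xI_1+x^2I_1+xI_0\bigr)=\frac{x(x+1)}h\,F(x),
\]
which is \eqref{eq:uniformbessel}. The step I expect to be most delicate is the odd-$N$ bookkeeping: there $ab<h^2$ and the factor bounds must absorb the $-\frac14$ exactly. The even-switch terms also need care with the asymmetric products.

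\emph{Part (ii).} At $x=x_N$ we have $hS_N(u_N)=h$. The left inequality of (i) then gives $F(x_N)\ge h=F(x_B)$. Since $F$ is increasing, $x_B\le x_N$, and the right inequality gives $F(x_N)(1-\delta_N)\le F(x_B)$.

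It remains to show $(\log F)'\ge2$. Using $I_0'=I_1$ and $I_1'(z)=I_0(z)-I_1(z)/z$ with $z=2x$,
\[
(\log F)'(x)=2+\frac1x\Bigl(1-\frac{I_1}{I_0+I_1}\Bigr)\ge2 .
\]
Integrating this from $x_B$ to $x_N$ gives $2(x_N-x_B)\le-\log(1-\delta_N)$.

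\emph{Part (iii), and Theorem~\ref{thm:root}(c).} First we need an a priori bound $x_N\le L$ for large $N$. At $x=L$ we have $F(L)\ge c\,e^{2L}/\sqrt L=cN^2/\sqrt L$ and $1-L(L+1)/h\ge\frac12$. By (i), $hS_N(L/h)>h$, so $u_N<L/h$, that is, $x_N<L$.

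Then $\delta_N=O(L^2/N)$, and (ii) gives $x_N-x_B=O(L^2/N)$. Dividing by $h$ gives $u_N-u_B=O(L^2/N^2)$. Finally, the asymptotics of $I_\nu$ give $2x_B\sim L$, so $Nu_N\sim L$ and hence $1-p_N\sim L/N$.
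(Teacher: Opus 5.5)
Your proposal is correct and follows the paper's proof essentially step for step: clipped products with Lemma~\ref{lem:clipped}(a) and (c) for (i), $(\log F)'=2+I_0/\bigl(x(I_0+I_1)\bigr)\ge2$ for (ii), and $\delta_N=O(L^2/N)$ for (iii), the only differences being that you keep each pair $(a-k)(b-k)/h^2$ as one factor where the paper splits it into $(a-k)/h$ and $(b-k)/h$, and that you get (iii) from the a priori bound $x_N<L$ rather than from Theorem~\ref{thm:root}(c). Two details need tidying: first, use $t_k=2k/h$ itself in Lemma~\ref{lem:clipped}(a), since your $t_k=(2hk+\frac14-k^2)/h^2$ is negative for $k>N$ and the factorwise bound $(a-k)_+(b-k)_+/h^2\ge(1-t_k)_+$ fails for some $k>a$, whereas with $2k/h$ both sides vanish there; second, for the even terms (where your prefactor $(a+b)(ab)^r/h^{2r+1}$ equals $2$ only for even $N$), normalize by $h^2$ per pair as in the odd terms and add the two mirror summands before bounding, so that the unpaired factors give $\bigl((a-r-1)+(b-r-1)\bigr)/h=2\bigl(1-\frac{r+1}h\bigr)$ and the parity offsets cancel, which is exactly what the paper's averaging of $A_{r+1}B_r$ and $A_rB_{r+1}$ does.
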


Section~\ref{sec:every-n-sub} uses part~(i) to locate $p_N$ at every $N$.

\begin{proof}
(i) \emph{Step 1. The clipped factors.} Let $a=\lfloor N/2\rfloor$ and
$b=\lceil N/2\rceil$. As in Lemma~\ref{lem:clipped}(b),
$A_r=\frac{r!}{h^r}\binom{a-1}{r}=\prod_{k=1}^r(1-\frac{h-a+k}h)_+$, and $B_r$
is the same with $b$ in place of $a$. Since $a+b=2h$ and $h-a\in\{0,\frac12\}$,
the $2r$ numbers $(h-a+k)/h$ and $(h-b+k)/h$ are nonnegative with sum
$r(r+1)/h$, so Lemma~\ref{lem:clipped}(a) gives $0\le1-A_rB_r\le r(r+1)/h$. In
the same way the deficits $1-A_{r+1}B_r$ and $1-A_rB_{r+1}$ are at most
$((r+1)^2+h-a)/h$ and $((r+1)^2+h-b)/h$. Averaging cancels the parity offsets
(the shifts $h-a$ and $h-b$ add to $0$), so $0\le1-\frac12(A_{r+1}B_r+A_rB_{r+1})\le(r+1)^2/h$.

\emph{Step 2. Summing.} With $u=x/h$ the run-count formula of
Theorem~\ref{thm:pmf} becomes
\[
S_N(u)=2u\Bigl[\sum_{r\ge0}\frac{x^{2r}}{(r!)^2}A_rB_r
+\sum_{r\ge0}\frac{x^{2r+1}}{r!(r+1)!}\,\frac{A_{r+1}B_r+A_rB_{r+1}}2\Bigr].
\]
Without the factors $A_r,B_r$ the bracket is $I_0(2x)+I_1(2x)$ and the right
side is $F(x)/h$. By Step~1, Lemma~\ref{lem:clipped}(c) and $(r+1)^2=r(r+1)+(r+1)$, $F(x)/h-S_N(u)$
lies between $0$ and $2u/h$ times
$\sum_rr(r+1)\frac{x^{2r}}{(r!)^2}+\sum_r(r+1)^2\frac{x^{2r+1}}{r!(r+1)!}=x(x+1)\bigl(I_0(2x)+I_1(2x)\bigr)$.
Dividing by $F(x)/h=2u(I_0(2x)+I_1(2x))$ proves (i). This covers every $r$,
including those beyond the degree of the polynomial, where the clipped products
vanish and Lemma~\ref{lem:clipped}(a) still applies.

(ii) Differentiating the series term by term gives $F'=2F+2I_0(2x)$. So $F$
increases from $0$ to $\infty$, $x_B$ is unique, and $(\log F)'\ge2$. At
$x=x_N$ we have $hS_N(u_N)=h$, so (i) gives $h\le F(x_N)$, hence $x_B\le x_N$,
and $F(x_N)\le h/(1-\delta_N)$ when $\delta_N<1$. Integrating $(\log F)'\ge2$
from $x_B$ to $x_N$ gives $2(x_N-x_B)\le\log(F(x_N)/h)\le-\log(1-\delta_N)$.

\emph{Proof of Theorem~\ref{thm:root}(c).} We use only part~(i). Note that
$x^{2r}/(r!)^2\le(2x)^{2r}/(2r)!$ and
$x^{2r+1}/(r!(r+1)!)\le(2x)^{2r+1}/(2r+1)!$ for $x\ge0$, since
$\binom{2r}r\le4^r$ and $\binom{2r+1}r\le2^{2r+1}$. So
$I_0(2x)+I_1(2x)\le e^{2x}$ and $F(x)\le2xe^{2x}$. Fix $\theta\in(0,1)$ and
put $x_\pm=(1\pm\theta)L/2$. Then $F(x_-)\le(1-\theta)LN^{1-\theta}<h$ for
large $N$, so $hS_N(x_-/h)<h$ by (i), and $x_-<x_N$ since $S_N$ increases. On
the other side $F(x_+)\ge2x_+I_0(2x_+)$, which is at least a constant times
$\sqrt L\,N^{1+\theta}$ for large $N$ by the large-argument form of $I_0$,
while $x_+(x_++1)/h\to0$. So $hS_N(x_+/h)\ge F(x_+)(1-x_+(x_++1)/h)>h$ for
large $N$, and $x_N<x_+$. Hence $Nu_N=2x_N\sim L$, so $u_N\to0$ and
$z_N=Nu_N/(1+u_N)\sim L$, which is (c).

(iii) By Theorem~\ref{thm:root}(c), $x_N\sim L/2$. So $\delta_N=O(L^2/N)$,
and (ii) gives $x_N-x_B=O(L^2/N)$. Divide by $h$.
\end{proof}

\subsection{The crossing at every \texorpdfstring{$N$}{N}}\label{sec:every-n-sub}

We measure the crossing by $z_N=N(1-p_N)=Nu_N/(1+u_N)$, the value of $\tau$ at
$p=p_N$. Let $\psi(z)=z+\frac12\log z$ and $c_0=\frac12\log(\pi/8)=-0.46735\ldots$,
and let $\zeta_N$ be the positive root of
\begin{equation}\label{eq:zetaN}
\psi(\zeta_N)=L+c_0,\quad\text{that is,}\quad 8\zeta_Ne^{2\zeta_N}=\pi N^2
\quad\text{or}\quad\zeta_N=\tfrac12W_0(\pi N^2/4),
\end{equation}
with $W_0$ the principal branch of the Lambert function. This is the
heuristic equation from the start of the section with $w$ replaced by $\zeta$.
Since $\zeta e^{2\zeta}$ increases, $\zeta_N$ increases with $N$. Also
$\zeta_N\le L$, since $\psi(L)\ge L+c_0$ when $L\ge\pi/8$. Finally let
$\varepsilon_N=\zeta_N(\zeta_N+1)/N+1/(16\zeta_N^2)$.

\begin{theorem}[the crossing at every $N$]\label{thm:every-n}
For every $N\ge2$,
\[
\zeta_N+\frac1{8\zeta_N}-\varepsilon_N<z_N<\zeta_N+\frac1{8\zeta_N}.
\]
\end{theorem}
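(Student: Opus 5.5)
The proof transfers the root $x_B$ of the Bessel equation to the exact crossing $z_N$, using Theorem~\ref{thm:bessel}. Write $x_N=hu_N$, so $Nu_N=2x_N$ and $z_N=2x_N/(1+u_N)$. By Theorem~\ref{thm:bessel}(ii), $x_B\le x_N\le x_B-\frac12\log(1-\delta_N)$.

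First I would bound $2x_B$ on both sides in terms of $\zeta_N$. The crossing equation $F(x_B)=h$ reads $\psi(2x_B)+\frac12\log\bigl(\sqrt{2\pi\cdot 2x_B}\,e^{-2x_B}(I_0+I_1)(2x_B)/2\bigr)\cdot(\dots)=L+c_0$. Concretely, I would write $I_0(z)+I_1(z)=\frac{2e^z}{\sqrt{2\pi z}}\,g(z)$. The asymptotic form gives $g(z)=1+\frac1{8z}\cdot\frac{(1)-(-3)}{2}\cdot\frac12+\dots$, that is, $g(z)=1-\frac1{8z}\cdot\frac{-1+3}{2}\cdot\ldots$. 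I would fix the constant exactly, so that the equation becomes $\psi(z_B)=L+c_0-\log g(z_B)$. The target form $\zeta_N+\frac1{8\zeta_N}$ requires $-\log g(z)\approx\frac1{8z}$ with a one-sided error. The key analytic input is therefore a pair of explicit two-sided inequalities for $g(z)$, valid for all $z>0$, of the form
\[
e^{-1/(8z)}\bigl(1-\tfrac{c}{z^2}\bigr)\le g(z)\le e^{-1/(8z)}.
\]
I would prove these from integral representations, such as $I_\nu(z)=\frac1\pi\int_0^\pi e^{z\cos\theta}\cos\nu\theta\,d\theta$ minus the tail, or from Laplace-type bounds. This step is the main obstacle, since the bounds have to hold at every $z$ with constants matching $1/(16\zeta_N^2)$. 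If such clean bounds fail for small $z$ (small $N$), I would restrict the analytic argument to $N\ge175$, where $\zeta_N$ is bounded below. The range $2\le N\le174$ is then handled by the exact-arithmetic check described in the introduction.

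Second, I would invert $\psi$. Since $\psi'(z)=1+\frac1{2z}\ge1$, a shift of the right-hand side by $\eta$ moves the root by at most $\eta$. Expanding around $\zeta_N$ gives $\psi(\zeta_N+\frac1{8\zeta_N})\ge L+c_0+\frac1{8\zeta_N}$. Comparing with the right-hand side, which is at most $L+c_0+\frac1{8z}$, and using monotonicity of $z\mapsto z-\frac1{8z}$, I would get $z_B<\zeta_N+\frac1{8\zeta_N}$. Symmetrically, I would get $z_B>\zeta_N+\frac1{8\zeta_N}-\frac1{16\zeta_N^2}$, where the concavity loss in $\psi$ and the $c/z^2$ term are absorbed into $1/(16\zeta_N^2)$.

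Third, I would pass from $z_B$ to $z_N$. For the upper bound, $z_N=2x_N/(1+u_N)$. The factor $1/(1+u_N)\le1-u_N+u_N^2$ loses about $2x_Nu_N=4x_N^2/N$, while $x_N-x_B\le-\frac12\log(1-\delta_N)\approx x_N(x_N+1)/N$. So $z_N\le z_B+2(x_N-x_B)-z_N u_N$, and I must check that $2(x_N-x_B)\le z_Nu_N$ plus the slack from the strict upper bound for $z_B$. This is delicate: $2\delta_N/2\approx2x^2/N$ against $4x^2/N$ does work to leading order, with the $+x$ term covered by the margin. Failing that, I would apply Theorem~\ref{thm:bessel}(i) directly at the candidate point $u$ with $z=\zeta_N+\frac1{8\zeta_N}$ and show $hS_N(u)>h$. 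For the lower bound, I would evaluate $S_N$ at the point $u$ corresponding to $z=\zeta_N+\frac1{8\zeta_N}-\varepsilon_N$ and show $S_N(u)\le F(x)/h<1$, using only the upper half of Theorem~\ref{thm:bessel}(i). Here the term $\zeta_N(\zeta_N+1)/N$ in $\varepsilon_N$ absorbs the shift between $2x=Nu$ and $z=Nu/(1+u)$, together with the $z_B$ slack.
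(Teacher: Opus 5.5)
Your plan is the paper's proof in a different order. Your lower-bound step is Lemma~\ref{lem:bracket}(a), your fallback for the upper bound is Lemma~\ref{lem:bracket}(b), and your bounds on $g=\cE$ play the role of Lemma~\ref{lem:bessel-bounds}. The split at $N=175$, with exact certificates below it, is also the paper's. Passing through the Bessel root $w_B=2x_B$ is harmless, and it shows where the two parts of $\varepsilon_N$ come from. However, the step that gives $w_B<\zeta_N+\frac1{8\zeta_N}$ fails as written. Saying that $L+c_0-\log g(z)$ is at most $L+c_0+\frac1{8z}$ is saying $\cE(z)\ge e^{-1/(8z)}$. That contradicts your own upper bound on $g$, and it is false for large $z$, since $-\log\cE(z)=\frac1{8z}+\frac1{32z^2}+O(z^{-3})$.

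The upper bound is a second-order statement. With $Y=\zeta_N+\frac1{8\zeta_N}$ one has $w_B=Y-\frac1{32\zeta_N^2}+O(\zeta_N^{-3})$. So you must keep the term $\frac12\log(Y/\zeta_N)\ge\frac1{16\zeta_N^2}-\frac1{256\zeta_N^4}$ that you dropped when you wrote $\psi(Y)\ge L+c_0+\frac1{8\zeta_N}$, and show that it beats $-\log\cE(Y)-\frac1{8Y}$.

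With $g(z)\ge e^{-1/(8z)}(1-c/z^2)$, this needs $c$ below about $\frac1{16}$ on all of $z\ge4$, while $c\ge\frac1{32}$ is forced as $z\to\infty$. The cubic lower bound of Lemma~\ref{lem:bessel-bounds} corresponds to $c\approx0.055$ at $z=4$. The margin left at $\zeta_N=4$ is then under $10^{-3}$; this is the positivity of $B$ in Appendix~\ref{app:every-n-large}. So this is where the explicit constants really matter.

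The lower side, by contrast, needs no $c/z^2$ term. Note first that your bound $g\le e^{-1/(8z)}$ cannot hold for all $z>0$, since $\cE(z)\sim\sqrt{\pi z/2}$ as $z\to0$. What you need is only $\cE(z)\le1-\frac1{8z}$ for $z\ge\frac54$. At $z=Y-\frac1{16\zeta_N^2}$ this gives $\psi(z)-\psi(\zeta_N)-\frac1{8z}\le-\frac1{64\zeta_N^3}$, hence $w_B>Y-\frac1{16\zeta_N^2}$.

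Finally, the transfer through Theorem~\ref{thm:bessel}(ii) needs an a priori upper bound on $w_*=Nu_N$. Since $\delta_N$ is defined through $w_*$, you cannot check $\delta_N<1$ or $-\log(1-\delta_N)\le w_*z_N/N$ without it. The direct test at $\bar w=NY/(N-Y)$, which is what the paper does, avoids this.
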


The width $\varepsilon_N$ tends to $0$, since $\zeta_N\le L$ and
$\zeta_N\to\infty$. It is $0.440$, $0.163$, $0.0387$ and $0.00778$ at
$N=10$, $10^2$, $10^3$ and $10^4$ (Table~\ref{tab:every-n}).

The two terms $\zeta_N$ and $1/(8\zeta_N)$ come from the following heuristic.
Let
$\Phi(w)=F(w/2)=w\bigl(I_0(w)+I_1(w)\bigr)$ and
$\cE(z)=\sqrt{\pi z/2}\,e^{-z}\bigl(I_0(z)+I_1(z)\bigr)$, which tends to $1$ as
$z\to\infty$. Put $w_*=Nu_N$ and $\Lambda_N=2\Phi(w_*)/N$. At the crossing, the identity
\eqref{eq:bracket-log} of Lemma~\ref{lem:bracket} and the bounds
\eqref{eq:uniformbessel} give
$\psi(w_*)+\log \cE(w_*)=L+c_0+\log\Lambda_N$ with
$0\le\log\Lambda_N\le-\log(1-w_*(w_*+2)/(2N))$ whenever $w_*(w_*+2)<2N$. With $\cE=\Lambda_N=1$ this
would say $w_*=\zeta_N$. Since $\log \cE(w)=-1/(8w)+O(w^{-2})$ and
$z_N=w_*+O(w_*^2/N)$, this equation suggests
$z_N=\zeta_N+1/(8\zeta_N)+O(\zeta_N^{-2}+\zeta_N^2/N)$. The theorem makes this
error explicit and one-sided. Its proof uses two lemmas. Lemma~\ref{lem:bracket}
turns a comparison of $z_N$ with a test value into a sign condition, and
Lemma~\ref{lem:bessel-bounds} bounds $\cE$ with explicit constants.

\begin{lemma}[root bracket]\label{lem:bracket}
Let $N\ge2$ and $w>0$. Then
\begin{equation}\label{eq:bracket-log}
\log\Phi(w)-\log\tfrac N2=\psi(w)-\psi(\zeta_N)+\log \cE(w).
\end{equation}
(a) If $\Phi(w)\le N/2$, then $Nw/(N+w)\le z_N$, strictly if $\Phi(w)<N/2$.
(b) If $\Phi(w)\bigl(1-\frac{w(w+2)}{2N}\bigr)>N/2$, then $z_N<Nw/(N+w)$.
\end{lemma}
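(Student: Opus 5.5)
The identity \eqref{eq:bracket-log} is pure bookkeeping. By the definition of $\cE$, $\Phi(w)=w(I_0(w)+I_1(w))=w\,\cE(w)e^w\sqrt{2/(\pi w)}$, so
\[
\log\Phi(w)=w+\tfrac12\log w+\tfrac12\log(2/\pi)+\log\cE(w).
\]
Subtracting $\log(N/2)=L-\log2$ gives
\[
\log\Phi(w)-\log\tfrac N2=\psi(w)-L+\tfrac12\log(2/\pi)+\log 2+\log\cE(w).
\]
Since $\tfrac12\log(2/\pi)+\log2=\tfrac12\log(8/\pi)=-c_0$ and $\psi(\zeta_N)=L+c_0$ by \eqref{eq:zetaN}, this is $\psi(w)-\psi(\zeta_N)+\log\cE(w)$, as claimed.

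For (a) and (b) I would reduce to Theorem~\ref{thm:bessel}(i) with $h=N/2$ and $x=w/2$. Then $x/h=w/h\cdot\frac12=w/N=:u$ and $F(x)=\Phi(w)$, and \eqref{eq:uniformbessel} reads
\[
\Phi(w)\Bigl(1-\tfrac{(w/2)(w/2+1)}{N/2}\Bigr)\le\tfrac N2S_N(w/N)\le\Phi(w),
\]
where $\frac{(w/2)(w/2+1)}{N/2}=\frac{w(w+2)}{2N}$. In case (a), $\frac N2S_N(u)\le\Phi(w)\le N/2$ gives $S_N(u)\le1$, with strict inequality if $\Phi(w)<N/2$. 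In case (b), the lower bound gives $\frac N2 S_N(u)>N/2$, so $S_N(u)>1$.

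By Theorem~\ref{thm:root}(a), $S_N$ is strictly increasing on $[0,\infty)$ with unique root $u_N$ of $S_N=1$. Hence $S_N(u)\le1$ forces $u\le u_N$ (strictly when $S_N(u)<1$), and $S_N(u)>1$ forces $u>u_N$. Finally, $v\mapsto Nv/(1+v)$ is strictly increasing, and it maps $u=w/N$ to $Nw/(N+w)$ and $u_N$ to $z_N$ by \eqref{eq:zN}. This transfers the inequalities to the stated ones.

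There is no real obstacle here. The only care needed is matching the parametrization $x=w/2$, $h=N/2$ so that the error factor becomes exactly $w(w+2)/(2N)$, and tracking strictness through the monotone maps.
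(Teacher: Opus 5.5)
Your proposal is correct and follows the paper's proof step for step. The identity comes from $\log\Phi(w)=\psi(w)+\frac12\log\frac2\pi+\log\cE(w)$ and $\psi(\zeta_N)=L+c_0$, and parts (a) and (b) come from Theorem~\ref{thm:bessel}(i) at $x=w/2$, $h=N/2$, together with the strict monotonicity of $S_N$ and of $v\mapsto Nv/(1+v)$, with the strict case handled exactly as the paper does.
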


\begin{proof}
Note that $\log\Phi(w)=\psi(w)+\frac12\log\frac2\pi+\log \cE(w)$ and
$\log\frac N2-\psi(\zeta_N)=-\log2-c_0=\frac12\log\frac2\pi$, which gives
\eqref{eq:bracket-log}. Now apply \eqref{eq:uniformbessel} with $x=w/2$. Then
$F(x)=\Phi(w)$ and $x(x+1)/h=w(w+2)/(2N)$, so
\[
\Phi(w)\Bigl(1-\frac{w(w+2)}{2N}\Bigr)\le\frac N2\,S_N\Bigl(\frac wN\Bigr)\le\Phi(w).
\]
(a) The hypothesis gives $S_N(w/N)\le1=S_N(u_N)$, strictly if $\Phi(w)<N/2$.
Since $S_N$ is strictly increasing, $w/N\le u_N$, strictly in that case. The
map $v\mapsto Nv/(1+v)$ increases and sends $w/N$ to $Nw/(N+w)$ and $u_N$ to
$z_N$. (b) Now $S_N(w/N)>1$, so $w/N>u_N$.
\end{proof}

\begin{lemma}[Bessel bounds]\label{lem:bessel-bounds}
For every $z>0$,
\[
1-\frac1{8z}-\frac3{128z^2}-\frac{45}{512z^3}\le \cE(z)\le1-\frac1{8z}-\frac3{128z^2}+\frac{15}{512z^3},
\]
and $\cE(z)\le1$. In particular $\cE(z)\le1-\frac1{8z}$ for $z\ge\frac54$.
\end{lemma}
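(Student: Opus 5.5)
We would work from the classical integral representation $e^{-z}I_\nu(z)=\frac1\pi\int_0^\pi e^{-z(1-\cos\theta)}\cos(\nu\theta)\,d\theta$ for $\nu=0,1$, which gives
\[
e^{-z}\bigl(I_0(z)+I_1(z)\bigr)=\frac1\pi\int_0^\pi e^{-z(1-\cos\theta)}(1+\cos\theta)\,d\theta .
\]
Substituting $t=1-\cos\theta$, so that $d\theta=dt/\sqrt{t(2-t)}$ and $1+\cos\theta=2-t$, this becomes
\[
\cE(z)=\sqrt{\tfrac{\pi z}2}\cdot\frac{\sqrt2}{\pi}\int_0^2 e^{-zt}\,t^{-1/2}\Bigl(1-\tfrac t2\Bigr)^{1/2}dt .
\]
All the bounds then reduce to polynomial bounds on $g(s)=(1-s)^{1/2}$ on $s=t/2\in[0,1]$, followed by the Gamma integrals $\int_0^\infty e^{-zt}t^{k-1/2}\,dt=\Gamma(k+\frac12)z^{-k-1/2}$. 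Normalizing by $\Gamma(\frac12)$, the factors are $1,\frac12,\frac34,\frac{15}8$ for $k=0,1,2,3$. For $k=0$ the prefactor is exactly $1$.

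The easy claim $\cE(z)\le1$ comes first. Since $g\le1$, the integrand is at most $e^{-zt}t^{-1/2}$. Extending the range from $[0,2]$ to $[0,\infty)$ only increases the integral, and the result is exactly $1$ after normalization.

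For the two-sided expansion we use the Taylor series $g(s)=1-\frac s2-\frac{s^2}8-\frac{s^3}{16}-\dots$, whose coefficients after the first are all negative. Truncating after any term therefore gives an upper bound for $g$. Truncating with the Lagrange remainder at $s^3$, or at $s^4$ with the tail bounded by the next term times $(1-s)^{-1/2}$-type control, gives a lower bound. On $[0,2]$ this yields
\[
1-\tfrac t4-\tfrac{t^2}{32}-C\,t^3\le\bigl(1-\tfrac t2\bigr)^{1/2}\le1-\tfrac t4-\tfrac{t^2}{32}-\tfrac{t^3}{128}
\]
for a suitable constant $C$. Integrating against $e^{-zt}t^{-1/2}$ over $[0,\infty)$ produces $1-\frac1{8z}-\frac3{128z^2}-\frac{15}{1024z^3}$ at the cubic order. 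The replacement of $[0,2]$ by $[0,\infty)$ leaves a tail $\int_2^\infty e^{-zt}t^{-1/2}\,p(t)\,dt$, which is $O(e^{-2z}\,\mathrm{poly}(z))$.

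The main obstacle is the bookkeeping in this step. We must bound these tails, and the slack in the lower polynomial bound, by explicit multiples of $z^{-3}$ valid for every $z>0$, using $e^{-2z}z^{k}\le \max_z(\cdot)$. We must also check that the constants fit into the stated $+\frac{15}{512z^3}$ and $-\frac{45}{512z^3}$. The sign of each tail is determined by the sign of the truncated polynomial on $[2,\infty)$, so we treat the upper and lower bounds separately. For small $z$, say $z\le1$, where the cubic bounds become weak, we would note that the lower bound is negative there and hence trivial. For the upper bound we would compare instead with $\cE\le1$, since $1-\frac1{8z}-\frac3{128z^2}+\frac{15}{512z^3}\ge1$ for small $z$.

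Finally, the last claim follows from the upper bound. Indeed $-\frac3{128z^2}+\frac{15}{512z^3}\le0$ exactly when $z\ge\frac{15\cdot128}{3\cdot512}=\frac54$, so $\cE(z)\le1-\frac1{8z}$ for $z\ge\frac54$.
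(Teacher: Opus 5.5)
Your framework is the paper's: the same integral representation and substitution (the paper also rescales $v=zt$), the same Gamma moments $1,\frac12,\frac34,\frac{15}8$, the same proof of $\cE\le1$, and the same final step at $z\ge\frac54$. The gap is the step that proves the two inequalities, namely the choice of cubic comparison polynomials for $\sqrt{1-s}$, and what you put in its place does not work. The stated constants essentially dictate this choice. A term $\kappa s^3$ integrates to $\frac{15\kappa}{64z^3}$, so $-\frac{45}{512z^3}$ and $+\frac{15}{512z^3}$ correspond to $\kappa=-\frac38$ and $\kappa=+\frac18$.

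\emph{Lower bound.} The Lagrange remainder at $s^3$ gives no uniform constant, since $\frac{d^3}{ds^3}\sqrt{1-s}=-\frac38(1-s)^{-5/2}$ is unbounded on $[0,1)$. Nor can $C$ be left as ``suitable''. Because $\sqrt{1-s}$ vanishes at $s=1$, the inequality $1-\frac s2-\frac{s^2}8-Cs^3\le\sqrt{1-s}$ forces $C\ge\frac38$ (that is, $\frac3{64}$ in your $t$), while the stated constant forces $C\le\frac38$. The value $\frac38$ does work: the Taylor coefficients $\lambda_k$ of $1-\sqrt{1-s}$ are positive and sum to $1$, so $\sum_{k\ge3}\lambda_ks^k\le\frac38s^3$ on $[0,1]$. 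With this choice the cubic vanishes at $s=1$ and is negative beyond it. Extending the integral to $[0,\infty)$ therefore only lowers it, and there is no tail to estimate. Your small-$z$ fallback is also false: the lower bound equals $0.7636\ldots>0$ at $z=1$, and it is negative only for $z$ below about $0.51$.

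\emph{Upper bound.} Your truncation $1-\frac s2-\frac{s^2}8-\frac{s^3}{16}$ becomes negative for $s$ above about $1.30$, so extending the range goes the wrong way. You would then have to bound the resulting tail by $\frac{45}{1024z^3}$ for every $z$ above about $0.4$ (below that, $\cE\le1$ suffices), and there $e^{-2z}$ is not small. This is plausibly true, but it is the bulk of the proof, and the proposal leaves it undone. The paper avoids it by loosening the cubic coefficient on purpose. The polynomial $1-\frac s2-\frac{s^2}8+\frac{s^3}8$ still lies above $\sqrt{1-s}$ on $[0,1]$. It is positive for all $s\ge-1$, because eight times it equals $(s-2)^2(s+1)+2(s-1)^2+2$. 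So extending the integral to $[0,\infty)$ only raises it. These two sign choices give both inequalities for every $z>0$, with no tail estimates and no case split in $z$. They are also why the stated upper bound has a plus sign in its cubic term.
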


The lemma is the large-argument expansion of $I_0+I_1$ with explicit errors.
We prove it in Appendix~\ref{app:bessel-bounds} by bounding $\sqrt{1-y}$ by cubics
in an integral form of $\cE$.

\begin{proof}[Proof of Theorem~\ref{thm:every-n}]
\emph{Step 1. The split at $N=175$.} By \eqref{eq:zetaN},
$N=\sqrt{8\zeta_N/\pi}\,e^{\zeta_N}$, and the right side increases in
$\zeta_N$. Since $\sqrt{32/\pi}\,e^4=174.25\ldots$, we have $\zeta_N\ge4$
exactly when $N\ge175$. Write $\zeta=\zeta_N$ and $Y=\zeta+\frac1{8\zeta}$.

\emph{Step 2. The upper bound for $N\ge175$.} We apply
Lemma~\ref{lem:bracket}(b) at $\bar w=NY/(N-Y)$, for which $N\bar w/(N+\bar w)=Y$. By
\eqref{eq:bracket-log} it suffices that
$\mathcal G=\psi(\bar w)-\psi(\zeta)+\log \cE(\bar w)+\log(1-\bar w(\bar w+2)/(2N))>0$. Expanding each
term with Lemma~\ref{lem:bessel-bounds} gives
$\mathcal G\ge\frac1{32\zeta^2}+\frac{\zeta(\zeta-1)}{2N}+O(\zeta^{-3}+N^{-1}+\zeta^4N^{-2})$,
and Appendix~\ref{app:every-n} makes every term explicit for $\zeta\ge4$.

\emph{Step 3. The lower bound for $N\ge175$.} We apply
Lemma~\ref{lem:bracket}(a) at $\bar w'=NY'/(N-Y')$ with
$Y'=Y-\varepsilon_N$. It suffices that
$\mathcal G'=\psi(\bar w')-\psi(\zeta)+\log \cE(\bar w')<0$, and we get
$\mathcal G'\le-\frac3{128\zeta^2}-\frac\zeta N+O(\zeta^{-3}+N^{-1}+\zeta^4N^{-2})$,
again with explicit constants for every $\zeta\ge4$ in
Appendix~\ref{app:every-n}. The term $\zeta(\zeta+1)/N$ in $\varepsilon_N$
absorbs the contributions of order $\zeta^2/N$, so that only $-\zeta/N$ remains.

\emph{Step 4. Short chains, by certificates.} Here $2\le N\le174$. Let
$d=10^9$, let $c=\lfloor du_N\rfloor$ and let $c'=c+1$. At $N=2$, where
$du_2=5\cdot10^8$ is an integer, we take $c=du_2-1$ and $c'=c+2$. The sign of $S_N(c/d)-1$ is the sign of $n_N(m_N)-d^{N-1}$, where
$n_N(m_N)=d^{N-1}S_N(c/d)$ is the integer of Remark~\ref{rem:tables} at
$(\alpha,\beta)=(d,c)$ (Appendix~\ref{app:certificates}). We computed these
integers at $c$ and $c'$ by two exact methods, the run-count sum of
Theorem~\ref{thm:pmf} and the recursion \eqref{eq:dp} with integer weights
$d,c$ in place of $p,q$. They agreed for every $N\le174$ and gave
$c/d<u_N<c'/d$, so $z(c/d)<z_N<z(c'/d)$ with $z(u)=Nu/(1+u)$. We then checked
$\zeta_N+\frac1{8\zeta_N}-\varepsilon_N<z(c/d)$ and
$z(c'/d)<\zeta_N+\frac1{8\zeta_N}$ with outward interval arithmetic
at 80 decimal digits. To enclose $\pi$, we used
$\pi=16\arctan(1/5)-4\arctan(1/239)$ and 80 terms of each alternating
series, with the first omitted term bounding its remainder. Each arithmetic
operation rounds outward. For a logarithm, we take the neighboring decimal
on each side of the correctly rounded endpoint value.

Newton's method only proposes a value of $\zeta_N$. We certify the bracket
of half-width $10^{-45}$ around it by enclosing
$\psi(\zeta)-L-c_0$ at both ends and checking opposite signs.
Evaluating both comparisons on this whole bracket gives margins greater than
$0.03319$ (upper) and $0.08559$ (lower), with the smallest at $N=174$.
The script \nolinkurl{verify_certified_bounds.py} replays these checks from
the stored rational endpoints, including all rounding and remainder bounds.
Lean checks the short-chain comparisons independently with rational bounds on $\zeta_N$ and $\pi$. We
also ran the certificates up to $N=1000$ as a sanity check (both methods up to
$400$, then only the run-count sum), and they all passed.
\end{proof}

\begin{table}[ht]
\centering\small
\begin{tabular}{@{}rllrrr@{}}
\toprule
$N$ & $z_N$ & $\zeta_N$ & $z_N-\zeta_N-\frac1{8\zeta_N}$ & $-\varepsilon_N$ & $R_N$\\
\midrule
2 & 0.666666666667 & 0.5368290974 & $-0.1030112$ & $-0.62938$ & $+0.050643$\\
10 & 1.615958871376 & 1.6001732978 & $-0.0623310$ & $-0.44048$ & $-0.056660$\\
100 & 3.503367368092 & 3.5100055489 & $-0.0422507$ & $-0.16337$ & $-0.043265$\\
174 & 3.996779704730 & 3.9987131703 & $-0.0331935$ & $-0.11878$ & $-0.033450$\\
175 & 4.001967989170 & 4.0038072816 & $-0.0330596$ & $-0.11838$ & $-0.033304$\\
1000 & 5.590351166975 & 5.5807388611 & $-0.0127862$ & $-0.03873$ & $-0.011887$\\
$10^4$ & 7.734265514999 & 7.7210118331 & $-0.0029359$ & $-0.00778$ & $-0.002051$\\
\bottomrule
\end{tabular}
\caption{The crossing against Theorem~\ref{thm:every-n}, which says that the fourth
column lies strictly between the fifth and $0$, and the remainder $R_N$ of
\eqref{eq:crossing-eq}, which lies between $-(z_N^2+z_N)/N$ and $1/(16z_N^2)$. We computed $z_N$ by run-count bisection in 50-digit decimal arithmetic
and by a floating-point dynamic program, which agree to the digits shown.}
\label{tab:every-n}
\end{table}

The bracket is in terms of $\zeta_N$ (a value of the Lambert function). The
next corollary states it for $p_N$ and
expands it in $L=\log N$.

\begin{corollary}\label{cor:every-n}
Put $\nu_N=L-\frac12\log L+c_0$, $Y_N=\zeta_N+\frac1{8\zeta_N}$ and
$Y_N'=Y_N-\varepsilon_N$.
\begin{enumerate}[(a)]
\item For every $N\ge2$ we have $0<Y_N'<Y_N<N$,
$1-Y_N/N<p_N<1-Y_N'/N$ and, for the coupling
$J_N^{\mathrm F}=-\frac12\log u_N$ at the crossing,
\[
\frac12\log\Bigl(\frac N{Y_N}-1\Bigr)<J_N^{\mathrm F}<\frac12\log\Bigl(\frac N{Y_N'}-1\Bigr).
\]
\item For every $N\ge2$, $\nu_N\le\zeta_N\le\nu_N+\frac{\log L-2c_0}{4\nu_N}$ and
\begin{equation}\label{eq:secondorder}
-\varepsilon_N<z_N-\nu_N<\frac{\log L-2c_0+\frac12}{4\nu_N}.
\end{equation}
In particular $z_N=L-\frac12\log L+c_0+O(\log L/L)$ as $N\to\infty$.
\item As $N\to\infty$,
\begin{equation}\label{eq:thirdorder}
z_N=\nu_N+\frac{\frac14\log L-\frac12c_0+\frac18}{L}+O\Bigl(\frac{(\log L)^2}{L^2}\Bigr).
\end{equation}
\item (Computer-assisted.) For every $N\ge2774$ we have $z_N>\zeta_N$.
\end{enumerate}
\end{corollary}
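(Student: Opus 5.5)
Parts (a) and (b) follow from Theorem~\ref{thm:every-n} together with elementary properties of $\psi(z)=z+\frac12\log z$. Part (c) is an expansion of the Lambert root $\zeta_N$, and part (d) combines the lower bound of Theorem~\ref{thm:every-n} with a finite computation.

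\emph{Part (a).} First I check $0<Y_N'$ and $Y_N<N$.
\begin{itemize}
\item For the upper inequality, $\zeta_N\le L$ (shown after \eqref{eq:zetaN}) gives $Y_N\le L+\frac1{8\zeta_N}<N$. The term $1/(8\zeta_N)$ is controlled because $\zeta_N$ increases with $N$ and $\zeta_2\approx0.537$.
\item Positivity of $Y_N'$ is the only non-automatic point. For $N\ge175$, where $\zeta_N\ge4$, I use $\varepsilon_N\le\zeta_N(\zeta_N+1)/N+1/256$ and $\zeta_N\le L$, so $\varepsilon_N$ is tiny compared with $\zeta_N$.
\item For $2\le N\le174$ it is read off the certified enclosures of Step~4 of the proof of Theorem~\ref{thm:every-n}, which already bound $Y_N'$ against a positive rational $z(c/d)$. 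Alternatively, a direct monotonicity check works: $\varepsilon_N\le\zeta_N(\zeta_N+1)/2+1/(16\zeta_N^2)$ against $\zeta_N+1/(8\zeta_N)$ is fine once $\zeta_N$ is not small, and the worst case is $N=2$, where $Y_2'\approx0.14$.
\end{itemize}
Then $z_N=N(1-p_N)$ turns the bracket of Theorem~\ref{thm:every-n} into $1-Y_N/N<p_N<1-Y_N'/N$. Since $u_N=z_N/(N-z_N)$, we get $J_N^{\mathrm F}=\frac12\log(N/z_N-1)$, which is decreasing in $z_N$ on $(0,N)$. This gives the coupling bounds.

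\emph{Part (b).} Since $\psi$ is increasing, $\nu_N\le\zeta_N$ is equivalent to $\psi(\nu_N)\le L+c_0$, that is $\frac12\log(\nu_N/L)\le0$. This holds because $0<\nu_N\le L$, as $c_0<0$ and $-\frac12\log L+c_0<0$ for $N\ge2$ (a direct check at $N=2$). For the upper bound, note $\zeta_N-\nu_N=\frac12\log(L/\zeta_N)\le(L-\zeta_N)/(2\zeta_N)$. Also $L-\zeta_N=\frac12\log\zeta_N-c_0\le\frac12\log L-c_0$, and $\zeta_N\ge\nu_N$. Together these give $\zeta_N-\nu_N\le(\log L-2c_0)/(4\nu_N)$. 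Inserting this and $1/(8\zeta_N)\le1/(8\nu_N)$ into Theorem~\ref{thm:every-n} yields \eqref{eq:secondorder}, and its lower side uses $\zeta_N\ge\nu_N$.

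\emph{Part (c).} I iterate once more. Write $\zeta_N-\nu_N=-\frac12\log\bigl(1-(L-\zeta_N)/L\bigr)$ with $L-\zeta_N=\frac12\log L-c_0+O(\log L/L)$. This gives $\zeta_N=\nu_N+(\frac14\log L-\frac12c_0)/L+O((\log L)^2/L^2)$. Then $1/(8\zeta_N)=1/(8L)+O(\log L/L^2)$ and $\varepsilon_N=O(L^2/N+L^{-2})$, so Theorem~\ref{thm:every-n} squeezes $z_N$ to \eqref{eq:thirdorder}.

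\emph{Part (d).} By Theorem~\ref{thm:every-n} it suffices that $\varepsilon_N\le1/(8\zeta_N)$, that is,
\[
\frac{\zeta_N(\zeta_N+1)}{N}<\frac1{8\zeta_N}-\frac1{16\zeta_N^2}.
\]
Writing $N=\sqrt{8\zeta/\pi}\,e^{\zeta}$, the quantity $N\bigl(\frac1{8\zeta}-\frac1{16\zeta^2}\bigr)/(\zeta(\zeta+1))$ is eventually increasing in $\zeta$. So this holds for all $N$ beyond an explicit threshold found by one interval-arithmetic evaluation and a monotonicity argument. Between $2774$ and that threshold, where the sufficient condition may fail, I would instead certify $z_N>\zeta_N$ directly as in Step~4: compute $c/d<u_N$ in exact integers and compare $z(c/d)$ with an enclosure of $\zeta_N$.

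The main obstacle is making (d) rigorous. The analytic threshold is likely much larger than $2774$, so the certificate range may be long and needs the cheaper run-count sum. The sharp constant $2774$ is presumably just where the computed sign of $z_N-\zeta_N$ last changes.
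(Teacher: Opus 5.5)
The proposal follows the paper's proof in (b), (c), (d) and in all of (a) except the claim $Y_N'>0$. Neither of the routes you offer for that claim works.

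\emph{Route through Step~4.} Step~4 of the proof of Theorem~\ref{thm:every-n} certifies $Y_N'<z(c/d)$. That is an upper bound on $Y_N'$ and says nothing about its sign.

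\emph{Direct estimate.} This route has its direction reversed. With $1/N\le\frac12$, the bound $\zeta_N(\zeta_N+1)/2+1/(16\zeta_N^2)$ grows quadratically in $\zeta_N$. It exceeds $Y_N=\zeta_N+1/(8\zeta_N)$ once $\zeta_N$ is above about $1.1$, which already happens at $N=6$, so it covers only $N\le5$.

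\emph{The missing step.} The paper uses a uniform observation:
\begin{itemize}
\item $\zeta_N(\zeta_N+1)/N<\zeta_N$, because $\zeta_N+1\le L+1<N$;
\item $1/(16\zeta_N^2)<1/(8\zeta_N)$, because $\zeta_N\ge\zeta_2>\frac12$.
\end{itemize}
Adding the two gives $\varepsilon_N<Y_N$ for every $N\ge2$, with no split at $175$ and no certificates.

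There are two smaller points.

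In (b), comparing $\psi(\nu_N)$ with $L+c_0$ presupposes $\nu_N>0$, but the reasons you give only show $\nu_N<L$. Positivity holds because $\nu_N$ increases in $N$ and $\nu_2>0.40$.

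In (d), your plan is the paper's, but you misread the constant. $2774$ is not where the sign of $z_N-\zeta_N$ last changes; that is $N=219$ (Remark~\ref{rem:sign}). It is exactly where your sufficient condition starts to hold. Put $n(\zeta)=\sqrt{8\zeta/\pi}\,e^\zeta$ and $f(\zeta)=n(\zeta)\bigl(\frac1{8\zeta}-\frac1{16\zeta^2}\bigr)/(\zeta(\zeta+1))$. The logarithmic derivative of $f$ is positive for $\zeta\ge3$. One interval evaluation gives $f(\zeta_{2774})>1$, and also $f(\zeta_{2773})<1$, so the intermediate certificate range you anticipate is empty. You do need an explicit range of monotonicity like this, rather than \emph{eventually} increasing, because $\zeta_{2774}\approx6.5$ must lie inside it for the single check to propagate to all larger $N$.
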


\begin{proof}
(a) Since $\frac12<\zeta_2\le\zeta_N$, we have $\frac1{16\zeta_N^2}<\frac1{8\zeta_N}$, and
since $\zeta_N\le L<N-1$, we have $\zeta_N(\zeta_N+1)/N<\zeta_N$. So
$\varepsilon_N<Y_N$, that is, $Y_N'>0$. Also $Y_N<L+\frac14<N$. The bounds are
Theorem~\ref{thm:every-n} with $p_N=1-z_N/N$ and, by \eqref{eq:zN},
$J_N^{\mathrm F}=\frac12\log(N/z_N-1)$, which decreases in $z_N$.

(b) By \eqref{eq:zetaN}, $\zeta_N=L+c_0-\frac12\log\zeta_N$. Since
$\zeta_N\le L$, this gives $\zeta_N\ge\nu_N$, and $\nu_N>0$ since it increases
in $N$ and $\nu_2>0.40$. Also $\log y\le y-1$ gives
$\zeta_N-\nu_N=\frac12\log(L/\zeta_N)\le(L-\zeta_N)/(2\zeta_N)$, and
$L-\zeta_N=\frac12\log\zeta_N-c_0\le\frac12\log L-c_0$. Hence
$\zeta_N-\nu_N\le(\log L-2c_0)/(4\nu_N)$. For $z_N$, add
Theorem~\ref{thm:every-n} and $0<\frac1{8\zeta_N}\le\frac1{8\nu_N}$. The last
claim follows since $\varepsilon_N=O(L^2/N+L^{-2})$.

(c) Put $\mu=\zeta_N-\nu_N=\frac12\log(L/\zeta_N)$. By (b),
$L-\zeta_N=\frac12\log L-c_0+O(\log L/L)$, so
$\mu=\frac{L-\zeta_N}{2L}+O\bigl(\frac{(\log L)^2}{L^2}\bigr)
=\frac{\frac14\log L-\frac12c_0}L+O\bigl(\frac{(\log L)^2}{L^2}\bigr)$.
Also $\frac1{8\zeta_N}=\frac1{8L}+O(\frac{\log L}{L^2})$ and
$\varepsilon_N=O(L^2/N+L^{-2})$, and Theorem~\ref{thm:every-n} gives (c).

(d) By Theorem~\ref{thm:every-n}, $z_N-\zeta_N>\frac1{8\zeta_N}-\varepsilon_N$,
which is positive when $f(\zeta_N)>1$, where
$f(\zeta)=n(\zeta)\bigl(\frac1{8\zeta}-\frac1{16\zeta^2}\bigr)/(\zeta(\zeta+1))$
and $n(\zeta)=\sqrt{8\zeta/\pi}\,e^\zeta$, so that $n(\zeta_N)=N$. The
logarithmic derivative of $f$ is
$1+\frac1{2\zeta}-\frac2\zeta+\frac1{2\zeta^2-\zeta}-\frac1{\zeta+1}>1-\frac3{2\zeta}-\frac1{\zeta+1}>0$
for $\zeta\ge3$. Since $\zeta_N$ increases with $N$, once $f(\zeta_N)>1$ it
stays above $1$. This step is computer-assisted. With the outward intervals
of Step~4 of the proof of Theorem~\ref{thm:every-n}, we evaluate
$f(\zeta_{2774})$ with $n(\zeta_{2774})$ replaced by the exact integer $2774$,
and obtain $f(\zeta_{2774})-1>2.0\times10^{-4}$ on the whole bracket. Since $\zeta_{2774}>3$, this proves (d). The same interval
check gives $f(\zeta_{2773})<1$.
The conclusion for every $N\ge2774$ is also proved in Lean. That proof uses
rational Taylor bounds for $\zeta_{2774}$ and an exponential comparison
to extend the bound to larger $N$.
\end{proof}

The heuristic before Lemma~\ref{lem:bracket} suggests that $\psi(z_N)\approx L+c_0+1/(8z_N)$. The next
proposition proves this at every $N$ with an explicit remainder.

\begin{proposition}[the crossing equation]\label{prop:crossing-eq}
For every $N\ge2$,
\begin{equation}\label{eq:crossing-eq}
z_N+\tfrac12\log z_N=L+c_0+\frac1{8z_N}+R_N,\qquad
-\frac{z_N^2+z_N}N\le R_N\le\frac1{16z_N^2}.
\end{equation}
We also have $L/2\le z_N\le L$, so $|R_N|\le\frac1{4L^2}+\frac{L^2+L}N$ for every
$N\ge2$.
\end{proposition}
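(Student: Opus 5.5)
The plan is to work at the exact crossing $w_*=Nu_N$ and transfer to $z_N=w_*/(1+u_N)$, rather than going through the bracket of Theorem~\ref{thm:every-n}. That bracket loses $\varepsilon_N$, which is too much for the upper bound $R_N\le1/(16z_N^2)$, since that bound has no term in $N$.

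\emph{Step 1 (the exact identity).} Apply \eqref{eq:bracket-log} at $w=w_*$, together with the two-sided bound $\Phi(w_*)(1-\frac{w_*(w_*+2)}{2N})\le\frac N2\le\Phi(w_*)$ from the proof of Lemma~\ref{lem:bracket}. This gives
\[
\psi(w_*)+\log\cE(w_*)=L+c_0+\log\Lambda_N,\qquad 0\le\log\Lambda_N\le-\log\Bigl(1-\frac{w_*(w_*+2)}{2N}\Bigr).
\]
Next relate the two scales. Since $w_*=z_N(1+u_N)$ and $u_N=z_N/(N-z_N)$, the gap is $w_*-z_N=z_N^2/(N-z_N)$. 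Because $\psi$ is increasing and concave, the mean value theorem gives
\[
(1+\tfrac1{2w_*})(w_*-z_N)\le\psi(w_*)-\psi(z_N)\le(1+\tfrac1{2z_N})(w_*-z_N).
\]
Hence $R_N=\bigl[\psi(w_*)-\psi(z_N)\bigr]^{-}+\log\Lambda_N-\log\cE(w_*)-\frac1{8z_N}$, where the first bracket enters with a minus sign.

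\emph{Step 2 (lower bound on $R_N$).} Drop $\log\Lambda_N\ge0$. For $w_*\ge\frac54$, use $-\log\cE(w_*)\ge-\log(1-\frac1{8w_*})\ge\frac1{8w_*}$ from Lemma~\ref{lem:bessel-bounds}. This leaves
\[
R_N\ge-(1+\tfrac1{2z_N})\frac{z_N^2}{N-z_N}-\Bigl(\frac1{8z_N}-\frac1{8w_*}\Bigr).
\]
The last difference equals $\frac{w_*-z_N}{8z_Nw_*}$, which is tiny. Together with the factor $N/(N-z_N)$, it has to fit inside $(z_N^2+z_N)/N$. The slack $z_N/N$ is far more than what is needed, at least once $z_N\le L$ is known. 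For small $w_*$, where Lemma~\ref{lem:bessel-bounds} does not give $\cE\le1-1/(8w)$, the plan is to fall back on the short-chain certificates of Step~4 of Theorem~\ref{thm:every-n}. The intervals $c/d<u_N<c'/d$ bound both sides of \eqref{eq:crossing-eq} directly, as in the last column of Table~\ref{tab:every-n}.

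\emph{Step 3 (upper bound on $R_N$; the main obstacle).} Use the lower bound of Lemma~\ref{lem:bessel-bounds}, which gives $-\log\cE(w)\le\frac1{8w}+\frac1{16w^2}$ after expanding the logarithm with explicit constants for $w$ not too small. Then
\[
R_N\le-\frac{w_*-z_N}{1}\Bigl(1+\frac1{2w_*}\Bigr)+\log\Lambda_N+\frac1{8w_*}-\frac1{8z_N}+\frac1{16w_*^2}.
\]
The delicate point is that $\log\Lambda_N\approx w_*^2/(2N)$ is positive and of order $L^2/N$, with no counterpart on the right side of \eqref{eq:crossing-eq}. I expect it to be cancelled by the negative term $-(w_*-z_N)\approx-z_N^2/N$, which is about twice as large. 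So I would verify the explicit inequality $-\log(1-\frac{w(w+2)}{2N})\le\frac{z^2}{N-z}(1+\frac1{2w})$ with $w=z(1+\frac z{N-z})$. This needs to hold in the range $z\le L$, $N\ge175$, and it should follow from $-\log(1-t)\le t+t^2$ and $z\le L\ll\sqrt N$. After that, $\frac1{16w_*^2}\le\frac1{16z_N^2}$ and $\frac1{8w_*}\le\frac1{8z_N}$ close the argument. Chains with $N\le174$ again go to the certificates.

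\emph{Step 4 (the range of $z_N$).} For $L/2\le z_N\le L$, the upper bound is Corollary~\ref{cor:every-n}(b), since $\nu_N+\frac{\log L-2c_0+1/2}{4\nu_N}\le L$ for large $N$ and the certificates cover the rest. The lower bound follows from $z_N>Y_N'$ and $\zeta_N\ge\nu_N$, again with certificates for small $N$. Substituting $z_N\ge L/2$ into $\frac1{16z_N^2}$, and $z_N\le L$ into $\frac{z_N^2+z_N}N$, gives $|R_N|\le\frac1{4L^2}+\frac{L^2+L}N$.
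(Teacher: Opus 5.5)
Your proposal is correct and is essentially the paper's proof. Both arguments apply \eqref{eq:bracket-log} at the exact crossing $w_*=Nu_N$. The lower bound comes from dropping $\log\Lambda_N\ge0$ and using $-\log\cE(w_*)\ge\frac1{8w_*}$. The upper bound comes from letting $-(w_*-z_N)\approx-w_*^2/N$ absorb $\log\Lambda_N\approx w_*^2/(2N)$. The range $L/2\le z_N\le L$ is taken from Theorem~\ref{thm:every-n}, and the certificates handle $N\le174$. For that last step, note that passing from the rational endpoints to $z_N$ uses that $\psi(z)-L-c_0-\frac1{8z}$ increases in $z$ while both bounds decrease.

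The remaining differences are cosmetic. You bound $\psi(w_*)-\psi(z_N)$ by concavity, where the paper splits it exactly as $w_*z_N/N+\frac12\log(1+w_*/N)$. Your cruder Bessel constant $\frac1{16}$ (the paper gets $0.0559$) still holds once $w_*\ge3.88$, and the bracket $z_N>Y_N'$ guarantees this for $N\ge175$.
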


We prove Proposition~\ref{prop:crossing-eq} in Appendix~\ref{app:crossing-eq}.
The proof writes $R_N$ exactly, using \eqref{eq:uniformbessel} and
\eqref{eq:bracket-log} at $w=w_*$ (formula \eqref{eq:RN-formula}). For
$N\ge175$, Lemma~\ref{lem:bessel-bounds} puts the Bessel part of this formula
between $0$ and $0.0559/z_N^2$, and the other terms lie between
$-(z_N^2+z_N)/N$ and $0$. For $N\le174$ the proof uses the same certificates as
Theorem~\ref{thm:every-n}.

\begin{remark}[the sign of $z_N-\zeta_N$, computer-assisted]\label{rem:sign}
We have $z_N>\zeta_N$ for $N\le15$, $z_N<\zeta_N$ for $16\le N\le218$, and
$z_N>\zeta_N$ for every $N\ge219$. For $N\le2773$ it follows from the sign of $S_N-1$, computed in exact integers,
at rationals that separate $u_N$ from $\zeta_N/(N-\zeta_N)$ (the odds at which
$z(u)=\zeta_N$). The scripts list these rationals and replay the comparisons with
outward intervals (Section~\ref{sec:verify}). The closest case is $N=218$,
where $\zeta_N$ is more than $1.65\times10^{-5}$ from the certified bracket
of $z_N$. For $N\ge2774$ the sign is Corollary~\ref{cor:every-n}(d).
\end{remark}

The ratios $u_B/u_N$ of the Bessel root of Theorem~\ref{thm:bessel} to the
true odds are $0.9761$, $0.9864$,
$0.9924$ at $N=100,200,400$ and $0.99579$, $0.99769$, $0.99874$ at
$N=800,1600,3200$. So $1-u_B/u_N$ is close to $L/(2N)$, which is $0.0230$ at
$N=100$ and $0.00126$ at $N=3200$. This is in line with
Theorem~\ref{thm:bessel}(iii), which gives a relative error $O(L/N)$ since
$u_N\sim L/N$. Figure~\ref{fig:crossing} compares the exact crossing
with the expansions of Corollary~\ref{cor:every-n} up to $N=10^4$.

\begin{figure}[t]
\centering
\includegraphics{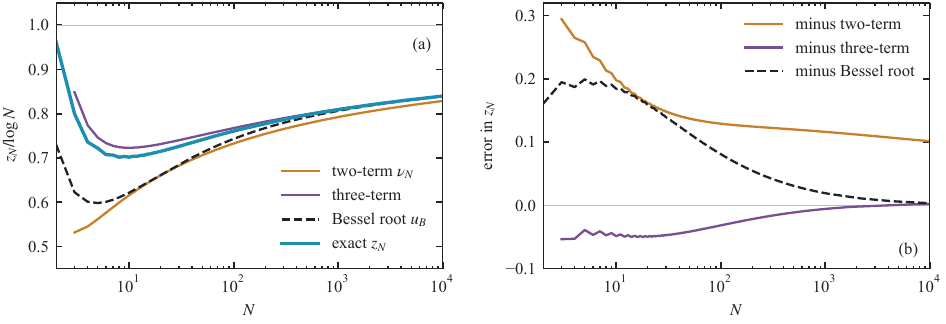}
\caption{(a) The scaled crossing $z_N/\log N$ from the exact run polynomial
(thick line), and the same quantity from the Bessel root $u_B$ of
Theorem~\ref{thm:bessel} (dashed), from the two-term expansion $\nu_N$ of
\eqref{eq:secondorder} and from the three-term expansion
$\nu_N+(\frac14\log L-\frac12c_0+\frac18)/L$ of \eqref{eq:thirdorder}. The
ratio tends to $1$ very slowly. (b) The exact $z_N$ minus each
approximation. The small oscillations for $N$ below about $30$ come from the
parity of $N$.}
\label{fig:crossing}
\end{figure}

\section{The crossing of every bin}\label{sec:allbin}

For each bin $a$ of the free chain we find the value $p_{a,N-a}$ of $p$ at
which bin $a$ is exactly as likely as an endpoint (Section~\ref{sec:periodic}
treats periodic ends). The switching probability $1-p_{a,N-a}$ at the
crossing, shown in Figure~\ref{fig:allbin}(a) for three values of $N$,
decreases as the bin moves in from
the edge, from order $N^{-1/2}$ next to the edge to order $(\log N)/N$ in the
middle.

\subsection{Ordering the crossings}\label{sec:allbin-order}

Let $a,b\ge1$ be integers, $N=a+b$ and $p>0$, and recall that
$S_{a,b}(u)=P_N(a)/P_N(0)$ with $u=(1-p)/p$. Theorem~\ref{thm:pmf} and
$\binom{m-1}{r+1}=\frac{m-1-r}{r+1}\binom{m-1}{r}$ for $m=a,b$ give
\begin{equation}\label{eq:allbin-polynomial}
 S_{a,b}(u)=\sum_{r\ge0}\binom{a-1}{r}\binom{b-1}{r}
 \Bigl(2u^{2r+1}+\frac{N-2-2r}{r+1}u^{2r+2}\Bigr).
\end{equation}
Its coefficients are nonnegative, as $N-2-2r\ge|b-a|$ for $r<\min(a,b)$, and
the linear one is $2$. So there is a unique $u_{a,b}>0$ with
$S_{a,b}(u_{a,b})=1$, and $p_{a,b}=1/(1+u_{a,b})$. As $S_{a,b}(u)\ge2u$, we
have $u_{a,b}\le\frac12$ and $p_{a,b}\in[\frac23,1)$, with $p_{a,b}=\frac23$
only for $a=b=1$.

\begin{proposition}[ordering the crossings]\label{prop:allbin-order}
If $a<b-1$, then
\[
 u_{a+1,b-1}<u_{a,b},\qquad p_{a+1,b-1}>p_{a,b}.
\]
Also $p_{a,b+1}>p_{a,b}$ for all positive $a,b$. In addition,
$S_{a,b+1}\ge S_{a,b}$ coefficientwise, and if $a<b-1$ then
$S_{a+1,b-1}\ge S_{a,b}$ coefficientwise, in each case with at least one
strict coefficient.
\end{proposition}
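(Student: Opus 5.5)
The plan is to compare the two polynomials coefficient by coefficient and then pass from coefficients to roots using monotonicity, in the same way as in the proof of Theorem~\ref{thm:root}(b). Both families of roots are defined by $S(u)=1$ for a polynomial $S$ with nonnegative coefficients and linear coefficient $2$. Such an $S$ is strictly increasing on $[0,\infty)$. Suppose $S'\ge S$ coefficientwise with at least one strict coefficient. Then $S'(u)>S(u)$ for every $u>0$, so $S'(u_S)>1$ at the root $u_S$ of $S$, and the root of $S'$ is smaller. Hence the statements about $u$ and $p$ follow from the coefficientwise statements, because $p=1/(1+u)$ is decreasing in $u$.

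\emph{Step 1: the comparison $S_{a,b+1}\ge S_{a,b}$.} I will use the form of the coefficients in Theorem~\ref{thm:pmf}. The coefficient of $u^{2r+1}$ is $2\binom{a-1}r\binom{b-1}r$. The coefficient of $u^{2r+2}$ is $\binom{a-1}{r+1}\binom{b-1}r+\binom{a-1}r\binom{b-1}{r+1}$. Every binomial $\binom{b-1}{\cdot}$ is nondecreasing in $b$, so each coefficient is nondecreasing when $b$ is replaced by $b+1$. The coefficient of $u^2$ is $a+b-2=N-2$, and it increases strictly.

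\emph{Step 2: the comparison $S_{a+1,b-1}\ge S_{a,b}$ for $a<b-1$.} Here $N$ is unchanged, so I will use \eqref{eq:allbin-polynomial}. In that form the terms of index $r$ are $\binom{a-1}r\binom{b-1}r$ times $2u^{2r+1}+\frac{N-2-2r}{r+1}u^{2r+2}$, and the bracket depends only on $N$ and $r$. It therefore suffices to show $\binom{a}r\binom{b-2}r\ge\binom{a-1}r\binom{b-1}r$ for every $r\ge0$. Multiplying both sides by $(r!)^2$ turns this into
\[
\prod_{k=1}^r(a+1-k)(b-1-k)\ \ge\ \prod_{k=1}^r(a-k)(b-k).
\]
I will compare the two products factor by factor. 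Put $x=a-k$ and $y=b-k$. Then $(x+1)(y-1)=xy+(b-a-1)\ge xy$, since $a\le b-2$. To justify the factorwise comparison, consider the range of $k$. If $k\le a-1$, all factors on both sides are positive, because $b-1-k\ge a+1-k>0$. If $r\ge a$, the right-hand product contains the factor $a-a=0$, so it vanishes. The left-hand product is then a product of nonnegative factors: its factors are $a+1-k\ge0$ for $k\le a+1$, and if $r$ is larger it too contains a zero factor. So the inequality holds in every case.

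\emph{Step 3: strictness.} For Step 2, I will check the coefficient of $u^3$, which is the case $r=1$: it goes from $2(a-1)(b-1)$ to $2a(b-2)$. The difference is $2(b-a-1)>0$.

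The main obstacle is minor: the clipping at small $a$, that is, the indices $r\ge a$ at which binomials vanish. I handle it by writing each coefficient as a product of linear factors and checking signs, as in Lemma~\ref{lem:clipped}(b). The rest is mechanical.
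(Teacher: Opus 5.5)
Your proposal is correct and follows essentially the paper's proof. You compare coefficients (via \eqref{eq:allbin-polynomial} for the move $(a,b)\to(a+1,b-1)$, where each factor grows by $b-a-1$, and via the run counts of Theorem~\ref{thm:pmf} for $b\to b+1$), take strictness from the $u^3$ and $u^2$ coefficients, and pass to the roots by monotonicity.

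One step is left unjustified: reducing the $u^{2r+2}$ comparison to $\binom ar\binom{b-2}r\ge\binom{a-1}r\binom{b-1}r$ needs the multiplier $(N-2-2r)/(r+1)$ to be nonnegative wherever either product is nonzero. The paper checks this explicitly: a nonzero product forces $r\le a$, so $N-2-2r\ge b-a-2\ge0$. For larger $r$ both products vanish, so the comparison there is $0=0$.
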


So a bin closer to the middle, or in a longer row, needs a larger $p$ to fall
to the endpoint height.

\begin{proof}
For fixed $N$ and $r\le a-1$,
$\binom{a-1}{r}\binom{b-1}{r}=(r!)^{-2}\prod_{j=0}^{r-1}(a-1-j)(b-1-j)$, a
product of positive factors. Each factor grows when $(a,b)$ becomes
$(a+1,b-1)$, by $(a-j)(b-2-j)-(a-1-j)(b-1-j)=b-a-1>0$. The multiplier
$(N-2-2r)/(r+1)$ of $u^{2r+2}$ is unchanged, and it is nonnegative where either product is
nonzero, since then $r\le a$ and $N-2-2r\ge b-a-2\ge0$. New summands are
nonnegative, and the cubic coefficient grows strictly, from $2(a-1)(b-1)$ to
$2a(b-2)$. Increasing $b$ with $a$ fixed decreases no coefficient of
\eqref{eq:allbin-polynomial} and increases the quadratic coefficient $N-2$
strictly. A
polynomial with nonnegative coefficients that dominates another
coefficientwise, strictly somewhere, is strictly larger at every $u>0$, so its
solution of $S=1$ is smaller.
\end{proof}

The ordering of the bin heights in Proposition~\ref{prop:adjacent} also
follows from the interior log-concavity of Dekking and
Kong~\cite[Proposition~4.3]{dekkingkong2011}, with the symmetry of the row,
and they remark after that proposition that a sharper proof gives strict
log-concavity. We use the coefficients because they give the strict
inequalities directly and also compare different rows. The formula for
$p_N^{\mathrm{adj}}$ in Proposition~\ref{prop:adjacent} is already implicit in
their work. For $N\ge4$ and $0<p<1$ we have $P_N(1)<P_N(2)$, so
by~\cite[Theorem~4.1]{dekkingkong2011} the row is trimodal exactly when
$P_N(0)>P_N(1)$. Their Example~4.2, with both switching probabilities equal
to $q$ and the stationary start, gives $P_N(1)/P_N(0)=2u+(N-2)u^2$, which is
$S_{1,N-1}(u)$. The same ratio follows from the exact values of $P_N(0)$ and
$P_N(1)$ given by Jensen~\cite[Eq.~(A.1)]{jensen2013}. Zheng, Lin and Zhang~\cite[Eqs.~(7) to (9) of the arXiv version]{zheng2026}
derive the counts of Theorem~\ref{thm:pmf} by Ising's method and the
coefficientwise comparison of Proposition~\ref{prop:allbin-order}, and
conclude that the Landau free energy per spin is smallest at zero
magnetization in the thermodynamic limit. For $p>p_N^{\mathrm{adj}}$ the row is not monotone near its edges, since the
endpoints are more likely than the adjacent bins. So at $p_N$ the bin
probabilities have their
maxima at the edges and in the middle. The next proposition proves this, with
the comparison $p_N>p_N^{\mathrm{adj}}$ for $N\ge4$.

\begin{proposition}[adjacent bins and global extrema]\label{prop:adjacent}
For $N\ge2$ and $0<p<1$, an adjacent bin has the same probability as an
endpoint precisely at
\[
p_N^{\mathrm{adj}}=\frac{1+\sqrt{N-1}}{2+\sqrt{N-1}}.
\]
For $0<p<1$, the endpoints are the least likely bins if
$p<p_N^{\mathrm{adj}}$, bins $1$ and $N-1$ are if $p>p_N^{\mathrm{adj}}$, and
they tie at equality. For $N\ge4$, the central crossing satisfies
$p_N>p_N^{\mathrm{adj}}$. At $p=p_N$, the middle bin and the endpoints attain the
global maximum and bins $1$ and $N-1$ attain the global minimum. For $N=2$
and $N=3$ we have $p_N=p_N^{\mathrm{adj}}$, and every bin ties at the crossing.
\end{proposition}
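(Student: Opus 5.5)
We reduce everything to the ratio polynomials $S_{a,b}$ of \eqref{eq:allbin-polynomial} and the ordering of Proposition~\ref{prop:allbin-order}.

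\emph{The adjacent crossing.} For $a=1$, $b=N-1$, only $r=0$ contributes, so $S_{1,N-1}(u)=2u+(N-2)u^2$. We solve $S_{1,N-1}(u)=1$. The positive root is
\[
u=\frac{-1+\sqrt{N-1}}{N-2}=\frac1{1+\sqrt{N-1}}
\]
for $N\ge3$; for $N=2$ the root is $u=\frac12$, which agrees. Then $p=1/(1+u)$ gives the stated $p_N^{\mathrm{adj}}$. Since $S_{1,N-1}$ is strictly increasing, $P_N(1)>P_N(0)$ exactly when $p<p_N^{\mathrm{adj}}$.

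\emph{The least likely bins.} Bins $1,\dots,N-1$ form a symmetric row. By Proposition~\ref{prop:allbin-order}, $S_{a+1,b-1}\ge S_{a,b}$ coefficientwise, strictly somewhere, whenever $a<b-1$. Hence for every $u>0$,
\[
P_N(1)<P_N(2)<\dots<P_N(m_N),
\]
and the other half of the row follows by the symmetry $k\mapsto N-k$. So among the interior bins, bins $1$ and $N-1$ are the strict minimum and the middle bins are the strict maximum when $N\ge4$. Comparing with the endpoints through $S_{1,N-1}$ and the previous paragraph gives which bins are least likely for $p$ below, above and at $p_N^{\mathrm{adj}}$.

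\emph{Comparing the crossings for $N\ge4$.} We need $u_N<u_{1,N-1}$. Apply the coefficientwise ordering repeatedly, moving from $(1,N-1)$ to $(m_N,N-m_N)$. Each step satisfies $a<b-1$ as long as $a<m_N$, and there is at least one step since $m_N\ge2$. This gives $S_N(u)>S_{1,N-1}(u)$ for all $u>0$. Evaluating at $u=u_{1,N-1}$ gives $S_N(u_{1,N-1})>1$, so $u_N<u_{1,N-1}$, that is, $p_N>p_N^{\mathrm{adj}}$.

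\emph{Extrema at $p=p_N$.} At $p=p_N$ the middle bins equal the endpoints by definition. The interior bins increase strictly toward the middle, so every other interior bin lies strictly below the middle. Since $p_N>p_N^{\mathrm{adj}}$, bins $1$ and $N-1$ lie strictly below the endpoints. Hence the middle bins and the endpoints attain the global maximum, and bins $1$ and $N-1$ attain the global minimum.

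\emph{Small cases.} For $N=2$ and $N=3$ the middle bins are the adjacent bins, so $p_N=p_N^{\mathrm{adj}}$ and all bins tie at the crossing; Theorem~\ref{thm:root}(b) confirms this, with $p_2=\frac23$ and $p_3=1/\sqrt2$.

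There is no real obstacle here. The only care needed is the strictness of the monotone chain of interior bins and the boundary cases $N\le3$.
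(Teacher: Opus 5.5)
Your proof is correct and follows the paper's argument: the explicit quadratic $S_{1,N-1}(u)=2u+(N-2)u^2$ and its root, the strict increase of the interior bins toward the middle from Proposition~\ref{prop:allbin-order} together with symmetry, and the comparison of the bins at $p=p_N$. The one cosmetic difference is in showing $p_N>p_N^{\mathrm{adj}}$: instead of chaining the coefficientwise ordering, the paper notes that $S_N$ has the same linear and quadratic terms as $S_{1,N-1}$ and a positive cubic coefficient $2(\lfloor N/2\rfloor-1)(\lceil N/2\rceil-1)$ for $N\ge4$.
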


So for $N\ge4$ the row at the central crossing has the shape of a W. It is high at the two
edges and in the middle, and lowest next to the edges.

\begin{proof}
By \eqref{eq:allbin-polynomial}, $S_{1,N-1}(u)=2u+(N-2)u^2$, with positive
root $u^{\mathrm{adj}}=1/(1+\sqrt{N-1})$. This gives $p_N^{\mathrm{adj}}$ and
the sign change. For $0<p<1$ we have $P_N(k)=P_N(0)S_{k,N-k}(u)$ with
$P_N(0)>0$, so if $1\le k<N-k-1$, then Proposition~\ref{prop:allbin-order} gives
$P_N(k)<P_N(k+1)$. By symmetry the middle is the largest interior bin and bins
$1,N-1$ are the smallest, which gives the claims about the least likely bins.
For $N\ge4$ the cubic coefficient $2(\lfloor N/2\rfloor-1)(\lceil N/2\rceil-1)$
of $S_N$ is positive, so $2u_N+(N-2)u_N^2<S_N(u_N)=1$, hence
$u_N<u^{\mathrm{adj}}$ and $p_N>p_N^{\mathrm{adj}}$. At $p_N$ the middle ties
with the endpoints, which gives the global maximum, and bins $1,N-1$ are below
the endpoints, which gives the global minimum. For $N=2,3$ we have
$S_N=S_{1,N-1}$, and symmetry gives the ties.
\end{proof}

So $1-p_N^{\mathrm{adj}}\sim N^{-1/2}$, much larger than
$1-p_N\sim(\log N)/N$. Away from the ends, the single letter $\sR$ of bin $1$
needs two switches (one into it and one out of it), so $S_{1,N-1}\approx Nu^2$ reaches $1$ only
at $u\approx N^{-1/2}$. (At $p=0$ and $p=1$ there are also ties between bins of
probability $0$.)

\subsection{A Bessel estimate uniform in the bin}\label{sec:allbin-bessel}

Words in bin $a$ with $r+1$ runs of each letter have $2r+1$ switches.
Their contribution to $S_{a,b}$ is about $2u(abu^2)^r/(r!)^2$.
So the natural variable is $abu^2$, and a bin near the
edge acts like the middle of a shorter row with $\sqrt{ab}$ in place of $N/2$.
Put
\[
 s=\sqrt{ab},\quad A=\frac{ab}{a+b},\quad
 c=\frac{a+b}{2\sqrt{ab}},\quad
 \eta=\frac1c,\quad x=su,\quad \mathcal J_c(x)=I_0(2x)+cI_1(2x),
\]
and $\widehat S_{a,b}(u)=2u\mathcal J_c(su)$. Here $c\ge1$ and $0<\eta\le1$, with
$\eta=1$ exactly when $a=b$. This Bessel form is classical. The fixed-scale
law of Herrmann and Vallois~\cite[Remark~3.4]{herrmann2010} with a fair start
has atoms $e^{-\tau}/2$ at $\pm1$ and density
$\frac12\tau e^{-\tau}\{I_0(\tau\rho)+I_1(\tau\rho)/\rho\}$ at magnetization
per spin $m$, where $\rho=\sqrt{1-m^2}$. At $m=(a-b)/N$ we have
$\rho=2s/N$, and with $\tau$ replaced by $Nu$, the density times the bin
width $2/N$, divided by an atom, is $\widehat S_{a,b}(u)$. The next theorem
bounds how far the exact ratio is from it, at every $N$.

\begin{theorem}[uniform estimate for every bin]\label{thm:allbin-bessel}
For all positive integers $a,b$ and all $u>0$,
\begin{equation}\label{eq:allbin-bessel}
 0\le 1-\frac{S_{a,b}(u)}{\widehat S_{a,b}(u)}
 \le \frac{(a+b)u^2}{2}+u
 =\frac{x^2}{2A}+\frac xs.
\end{equation}
In particular, for fixed $N$ and $u$ the bound does not depend on the bin.
The inequality still holds when the right side exceeds $1$, but then it gives no
lower bound on $S_{a,b}$.
\end{theorem}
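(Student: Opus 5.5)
The plan is to redo the proof of Theorem~\ref{thm:bessel}(i), with the middle pair replaced by a general pair $(a,b)$ and $h$ replaced by the separate scales $a$ and $b$. First I will write every binomial in the run count of Theorem~\ref{thm:pmf} through Lemma~\ref{lem:clipped}(b):
\[
\binom{a-1}{r}=\frac{a^r}{r!}\,U_r(a),\qquad \binom{b-1}{r}=\frac{b^r}{r!}\,U_r(b).
\]
With $x=su$ and $s^2=ab$, the words with $2r+1$ switches then contribute
\[
2u\,\frac{x^{2r}}{(r!)^2}\,U_r(a)U_r(b).
\]
The words with $2r+2$ switches contribute
\[
2u\,\frac{x^{2r+1}}{r!\,(r+1)!}\cdot\frac{a\,U_{r+1}(a)U_r(b)+b\,U_r(a)U_{r+1}(b)}{2s}.
\]
If all $U$'s are replaced by $1$, the last fraction becomes $(a+b)/(2s)=c$. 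Summing over $r$ then gives exactly $2u\bigl(I_0(2x)+cI_1(2x)\bigr)=\widehat S_{a,b}(u)$. Since every $U_r\in[0,1]$, this already proves the lower inequality $S_{a,b}\le\widehat S_{a,b}$.

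For the upper inequality I will bound the deficits termwise with Lemma~\ref{lem:clipped}(a), using that $U_r(m)$ is a clipped product with terms $k/m$.

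\emph{Odd terms.} The deficit satisfies
\[
1-U_r(a)U_r(b)\le \frac{r(r+1)}2\Bigl(\frac1a+\frac1b\Bigr)=\frac{r(r+1)}{2A}.
\]

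\emph{Even terms.} Here I keep the weights $a$ and $b$ inside the sum rather than averaging as in Step~1 of Theorem~\ref{thm:bessel}. The weighted deficit
\[
a\bigl(1-U_{r+1}(a)U_r(b)\bigr)+b\bigl(1-U_r(a)U_{r+1}(b)\bigr)
\]
is at most
\[
(r+1)(r+2)+\frac{r(r+1)}2\cdot\frac{a^2+b^2}{ab}.
\]
Since $(r+1)(r+2)=r(r+1)+2(r+1)$, this equals $2(r+1)+r(r+1)\,(a+b)^2/(2ab)=2(r+1)+2c^2\,r(r+1)$.

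Next I will sum these bounds with the identities of Lemma~\ref{lem:clipped}(c). The odd part of the total deficit, divided by $2u$, is at most
\[
\frac{x^2I_0(2x)+xI_1(2x)}{2A}.
\]
The even part is at most
\[
\frac1{2s}\bigl(2c^2x^2I_1(2x)+2xI_0(2x)\bigr)=\frac{c^2x^2}{s}\,I_1(2x)+\frac xs\,I_0(2x).
\]
The key simplification is $1/(2A)=(a+b)/(2ab)=c/s$. With it, $c^2x^2I_1/s=cx^2I_1/(2A)$ and $xI_1/(2A)=(x/s)\,cI_1$. So the total deficit is at most
\[
2u\Bigl(\frac{x^2}{2A}+\frac xs\Bigr)\bigl(I_0(2x)+cI_1(2x)\bigr)=\Bigl(\frac{x^2}{2A}+\frac xs\Bigr)\widehat S_{a,b}(u).
\]
Dividing by $\widehat S_{a,b}$ gives \eqref{eq:allbin-bessel}. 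The identity with the other form of the bound is immediate: $x^2/(2A)=abu^2(a+b)/(2ab)=(a+b)u^2/2$ and $x/s=u$.

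As in Theorem~\ref{thm:bessel}, the sums run over all $r\ge0$. For $r$ beyond the degree the clipped products vanish, and Lemma~\ref{lem:clipped}(a) still applies, so no truncation issues arise. The same reasoning covers the case where the right side exceeds $1$. The main obstacle is the even-switch terms: the naive averaging used for the middle bin fails when $a\ne b$, and one has to notice that keeping the weights $a,b$ makes the cross term $(a^2+b^2)/(2ab)$ combine with $1$ into $c^2$. That is what makes the bound collapse exactly onto the common factor $I_0+cI_1$. I would verify this bookkeeping first, because the rest is routine.
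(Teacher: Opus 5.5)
Your proposal is correct and follows the paper's proof essentially step by step. Your fraction $\bigl(aU_{r+1}(a)U_r(b)+bU_r(a)U_{r+1}(b)\bigr)/(2s)$ is exactly the paper's $cE_r$ with the weighted average $E_r$, your even-term bound $\bigl(2(r+1)+2c^2r(r+1)\bigr)/(2s)$ is the paper's bound on $c(1-E_r)$, and the final collapse onto $I_0+cI_1$ uses the same identity $1/(2A)=c/s$.
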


When $u$ is about $L/N$, the bound is $O(L^2/N)$ for every bin at once. The
proof follows that of Theorem~\ref{thm:bessel}(i), with the two letters
weighted by $a$ and $b$.

\begin{proof}
Let $U_r=U_r(a)$ and $V_r=U_r(b)$ as in Lemma~\ref{lem:clipped}(b), and note
that $1/a+1/b=1/A$. Lemma~\ref{lem:clipped}(a), applied to the $2r$ clipped
factors of $U_rV_r$, gives
\begin{equation}\label{eq:allbin-odd-deficit}
 0\le1-U_rV_r\le\frac{r(r+1)}{2A}.
\end{equation}
Lemma~\ref{lem:clipped}(a) gives $1-U_{r+1}V_r\le\frac{(r+1)(r+2)}{2a}+\frac{r(r+1)}{2b}$,
and the same with $a$ and $b$ exchanged for $U_rV_{r+1}$. So the weighted
average $E_r=(aU_{r+1}V_r+bU_rV_{r+1})/(a+b)$ satisfies
\begin{align}
 0\le1-E_r
 &\le\frac{(r+1)(r+2)}{a+b}
 +\frac{r(r+1)}{2(a+b)}\Bigl(\frac ab+\frac ba\Bigr)\notag\\
 &=\frac{r(r+1)}{2A}+\frac{2(r+1)}{a+b}.\label{eq:allbin-even-deficit}
\end{align}
Since $\binom{a-1}r=a^rU_r/r!$, equation~\eqref{eq:allbin-polynomial} becomes
\begin{equation}\label{eq:allbin-normalized-series}
 S_{a,b}(u)=2u\Bigl[
 \sum_{r\ge0}\frac{x^{2r}}{(r!)^2}U_rV_r
 +c\sum_{r\ge0}\frac{x^{2r+1}}{r!(r+1)!}E_r\Bigr],
\end{equation}
and dropping $U_rV_r$ and $E_r$ gives $\widehat S_{a,b}(u)$. So by
Lemma~\ref{lem:clipped}(c), with $I_j=I_j(2x)$,
$(\widehat S_{a,b}-S_{a,b})/(2u)$ lies between $0$ and
\[
 \frac1{2A}(x^2I_0+xI_1+cx^2I_1)+\frac{2c\,xI_0}{a+b}
 =\Bigl(\frac{x^2}{2A}+\frac xs\Bigr)\mathcal J_c(x),
\]
since $c/s=1/(2A)$ and $2c/(a+b)=1/s$. Divide by $\widehat S_{a,b}/(2u)=\mathcal J_c(x)$.
\end{proof}

In Ising terms the theorem is a bound on the partition function of the
fixed-magnetization ensemble, at every $N$, every coupling and every
magnetization $|\mu|<N$.

\begin{corollary}[fixed-magnetization weight]\label{cor:fixed-M}
Let $|\mu|<N$ with $\mu\equiv N\pmod2$, and let $Z_N(\mu)$ be the total
weight of the configurations of the free chain with magnetization $\mu$, as
in Section~\ref{sec:setting}. Put $\rho=\sqrt{1-\mu^2/N^2}$ and
$v=Ne^{-2J}\rho$. Then for every real $J$,
\[
Z_N(\mu)=2e^{(N-3)J}\Bigl(I_0(v)+\frac{I_1(v)}\rho\Bigr)(1-\theta),\qquad
0\le\theta\le\frac N2e^{-4J}+e^{-2J}.
\]
\end{corollary}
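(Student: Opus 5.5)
This corollary is Theorem~\ref{thm:allbin-bessel} rewritten in Ising variables, so the plan is to translate each quantity and then check the algebra. Put $a=(N+\mu)/2$ and $b=(N-\mu)/2$. These are positive integers because $|\mu|<N$ and $\mu\equiv N\pmod2$. Put $u=e^{-2J}>0$, which is allowed for every real $J$; the theorem holds for all $u>0$, and $J\le0$ only gives $u\ge1$.

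First, the Ising dictionary of Section~\ref{sec:setting} gives $Z_N(\mu)=Z_N(N)\,S_{a,b}(u)$ with $Z_N(N)=e^{(N-1)J}$. This is pure weight bookkeeping. A configuration with $j$ walls has weight $e^{(N-1-2j)J}=e^{(N-1)J}u^j$, and Theorem~\ref{thm:pmf} counts the configurations in bin $a$ with $j$ walls. So it holds for every real $J$ and does not depend on $p\in(0,1)$.

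Second, I identify the Bessel form. We have $s=\sqrt{ab}=\frac12\sqrt{N^2-\mu^2}=N\rho/2$, so $2x=2su=Ne^{-2J}\rho=v$. Also $c=(a+b)/(2\sqrt{ab})=N/(N\rho)=1/\rho$. Hence $\widehat S_{a,b}(u)=2u\bigl(I_0(v)+I_1(v)/\rho\bigr)$. Multiplying by $e^{(N-1)J}$ turns the factor $2u\,e^{(N-1)J}$ into $2e^{(N-3)J}$. I then define $\theta=1-S_{a,b}(u)/\widehat S_{a,b}(u)$, which gives the displayed identity exactly.

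Third, the bounds on $\theta$ are \eqref{eq:allbin-bessel}. Its right side is $(a+b)u^2/2+u=\frac N2e^{-4J}+e^{-2J}$, and its left side gives $\theta\ge0$.

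There is no real obstacle here. The only points to verify are the positivity and parity conditions that make $a,b$ positive integers, and the exponent bookkeeping $e^{(N-1)J}u=e^{(N-3)J}$.
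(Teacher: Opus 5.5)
Your proposal is correct and is essentially the paper's own proof: put $a=(N+\mu)/2$, $b=(N-\mu)/2$ and $u=e^{-2J}$, use $Z_N(\mu)=e^{(N-1)J}S_{a,b}(u)$, check that $2su=v$, $c=1/\rho$ and $2ue^{(N-1)J}=2e^{(N-3)J}$, and read off the bounds on $\theta$ from Theorem~\ref{thm:allbin-bessel}. You also justify the dictionary for every real $J$ by counting domain walls directly, a detail the paper leaves implicit.
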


\begin{proof}
Put $a=(N+\mu)/2$ and $b=(N-\mu)/2$. Then $Z_N(\mu)=e^{(N-1)J}S_{a,b}(u)$ with
$u=e^{-2J}$, and $2s=2\sqrt{ab}=N\rho$, so $2su=v$ and $c=1/\rho$. Now
$\widehat S_{a,b}(u)=2u(I_0(v)+I_1(v)/\rho)$ and $2ue^{(N-1)J}=2e^{(N-3)J}$, and
Theorem~\ref{thm:allbin-bessel} bounds $\theta$.
\end{proof}

\begin{remark}[the sign of the first correction]\label{rem:sign-correction}
Theorem~\ref{thm:allbin-bessel} shows that the exact weight lies below its
Bessel form, so the first correction to the Bessel form is negative. Its size
is of the order of the bound, since the clipped products $U_rV_r$ fall short
of $1$ by about $r(r+1)/(2A)$. In the published Appendix~D of Dantchev,
Tonchev and Rudnick~\cite[p.~19, Eqs.~(D.4) and (D.6)]{dantchev2023}, the
correction to the Bessel form of ${}_2F_1(-a,-b;1;z)$ has the opposite sign
(their $a$ and $b$ are our $a-1$ and $b-1$). Their
Eq.~(D.4) writes $\binom an n!$ as $(a+n-1)!/(a-1)!$, while it is the falling
factorial $a!/(a-n)!$, and with the falling factorial the sign of the
correction is reversed. For example, with their parameters $a=b=200$ and $abz=25$, the
exact ${}_2F_1(-a,-b;1;z)$ is $9.5$ percent below $I_0(2\sqrt{abz})$, while
their Eq.~(D.6) is $10.1$ percent above it and the same formula with the
opposite sign is within $0.8$ percent of the exact value. This concerns
only the sign of that correction.
\end{remark}

To turn Theorem~\ref{thm:allbin-bessel} into a bound on the crossing, let $x_{a,b}=su_{a,b}$ and let
$\hat x_{a,b}$ be the positive root of
\begin{equation}\label{eq:allbin-bessel-root}
 G_\eta(\hat x_{a,b})=A,\qquad
 G_\eta(x)=x\{I_1(2x)+\eta I_0(2x)\}.
\end{equation}
Since $c\eta=1$ and $2c/s=1/A$, we have $\widehat S_{a,b}(u)=G_\eta(su)/A$.
Differentiating term by term gives
\begin{equation}\label{eq:allbin-logderivative}
 G_\eta'(x)-2G_\eta(x)
 =2x(1-\eta)\{I_0(2x)-I_1(2x)\}+\eta I_0(2x)\ge0,
\end{equation}
since $I_0(2x)-I_1(2x)=\frac1\pi\int_0^\pi e^{2x\cos t}(1-\cos t)\,dt>0$
by \cite[Eq.~10.32.3]{dlmf}. So $(\log G_\eta)'\ge2$ for $x>0$ and
$0\le\eta\le1$, and $\hat x_{a,b}$ is unique.

\begin{corollary}[quantitative crossing bound]\label{cor:allbin-root-bound}
We have $x_{a,b}\ge \hat x_{a,b}$. If
\[
 \delta_{a,b}=\frac{x_{a,b}^2}{2A}+\frac{x_{a,b}}s<1,
\]
then
\begin{equation}\label{eq:allbin-root-bound}
 0\le x_{a,b}-\hat x_{a,b}\le-\tfrac12\log(1-\delta_{a,b}).
\end{equation}
\end{corollary}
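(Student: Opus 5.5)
The argument mirrors the proof of Theorem~\ref{thm:bessel}(ii), with $G_\eta$ in place of $F$ and Theorem~\ref{thm:allbin-bessel} in place of \eqref{eq:uniformbessel}. The first step is to rewrite \eqref{eq:allbin-bessel} in terms of $G_\eta$. Since $\widehat S_{a,b}(u)=G_\eta(su)/A$, the bound says that for every $u>0$, with $x=su$,
\[
\frac{G_\eta(x)}{A}\Bigl(1-\frac{x^2}{2A}-\frac xs\Bigr)\le S_{a,b}(u)\le\frac{G_\eta(x)}{A}.
\]

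Next I evaluate this at the crossing $u=u_{a,b}$, where $S_{a,b}(u_{a,b})=1$ and $x=x_{a,b}$. The upper inequality gives $G_\eta(x_{a,b})\ge A=G_\eta(\hat x_{a,b})$. By \eqref{eq:allbin-logderivative}, $G_\eta$ is strictly increasing on $(0,\infty)$: it is positive there and $(\log G_\eta)'\ge2$. Hence $x_{a,b}\ge\hat x_{a,b}$, which proves the first claim.

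If $\delta_{a,b}<1$, the lower inequality at the crossing reads $G_\eta(x_{a,b})(1-\delta_{a,b})\le A$, so $G_\eta(x_{a,b})/A\le1/(1-\delta_{a,b})$. Integrating $(\log G_\eta)'\ge2$ from $\hat x_{a,b}$ to $x_{a,b}$, which is legitimate since $\hat x_{a,b}\le x_{a,b}$ and both are positive, gives
\[
2(x_{a,b}-\hat x_{a,b})\le\log\frac{G_\eta(x_{a,b})}{G_\eta(\hat x_{a,b})}=\log\frac{G_\eta(x_{a,b})}A\le-\log(1-\delta_{a,b}).
\]
This is \eqref{eq:allbin-root-bound}.

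There is no real obstacle. The only points to check are that $\delta_{a,b}$ is exactly the right side of \eqref{eq:allbin-bessel} evaluated at $x=x_{a,b}$, and that the log-derivative bound \eqref{eq:allbin-logderivative} holds for the relevant $\eta\in(0,1]$. Both are already established before the statement.
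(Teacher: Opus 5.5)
Your proposal is correct and follows the paper's proof: evaluate the two-sided bound of Theorem~\ref{thm:allbin-bessel} at the crossing, rewritten as $A\le G_\eta(x_{a,b})\le A/(1-\delta_{a,b})$ via $\widehat S_{a,b}(u)=G_\eta(su)/A$. Then use the monotonicity of $G_\eta$ and integrate $(\log G_\eta)'\ge2$ from $\hat x_{a,b}$ to $x_{a,b}$, exactly as in Theorem~\ref{thm:bessel}(ii).
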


\begin{proof}
At
the crossing $S_{a,b}=1$, so Theorem~\ref{thm:allbin-bessel} gives
$A\le G_\eta(x_{a,b})\le A/(1-\delta_{a,b})$. As $G_\eta(\hat x_{a,b})=A$, this gives
$x_{a,b}\ge \hat x_{a,b}$, and integrating $(\log G_\eta)'\ge2$ from $\hat x_{a,b}$ to
$x_{a,b}$ gives \eqref{eq:allbin-root-bound}. This is the argument of
Theorem~\ref{thm:bessel}(ii). For even $N$ and $a=b$ the two statements
coincide, since then $\eta=1$, $s=h$ and $2G_1=F$, so $\hat x_{a,b}=x_B$.
\end{proof}

\begin{remark}[Jacobi polynomials]\label{rem:jacobi}
The run sums of this paper can be written in terms of Jacobi polynomials, and
their Bessel and Laguerre forms are limits of Jacobi polynomials near the
point $1$.
For $a\le b$ and $0\le t<1$,
\[
 \sum_{r\ge0}\binom{a-1}{r}\binom{b-1}{r}t^r
 ={}_2F_1(1-a,1-b;1;t)
 =(1-t)^{a-1}P_{a-1}^{(0,\,b-a)}\Bigl(\frac{1+t}{1-t}\Bigr),
\]
and for $b>a$ and $0\le u<1$,
\[
 \sum_{r\ge0}\binom{a-1}{r}\binom{b-1}{r+1}u^{2r+2}
 =\frac{(b-1)u^2}{a}(1-u^2)^{a-1}P_{a-1}^{(1,\,b-a-1)}\Bigl(\frac{1+u^2}{1-u^2}\Bigr).
\]
For fixed $\alpha,\beta>-1$, the Mehler--Heine limit
\cite[Eq.~18.11.5]{dlmf} holds uniformly on bounded sets of complex
arguments~\cite[Theorem~8.1.1]{szego1975}. Taken at the argument $iw$, it gives
$n^{-\alpha}P_n^{(\alpha,\beta)}(1+w^2/(2n^2))
\to(w/2)^{-\alpha}I_\alpha(w)$ as $n\to\infty$, uniformly for bounded $w\ge0$,
with the value at $w=0$ taken by continuity.
Elliott~\cite[Section~3, Eq.~(3.10)]{elliott1971} gives formal modified-Bessel
expansions in the plane cut along $(-\infty,1]$, away from $-1$, for fixed
parameters with $\alpha\ge0$. They apply on the real axis to the right of $1$,
where the arguments above lie, but he defers truncation-error bounds to a
proposed later paper~\cite[Section~1]{elliott1971}.
Dunster~\cite[Section~2, Eqs.~(2.43) to~(2.50)]{dunster1999} gives
approximations by Whittaker functions on unbounded complex domains when the
second Jacobi parameter is large and the degree is at most a fixed multiple
of it. In the ultraspherical case $\alpha=\beta$, which includes the
first sum above when $a=b$, he gives Bessel approximations that hold up
to and beyond the point $1$~\cite[Section~5, Eqs.~(5.11), (5.15)
and~(5.16)]{dunster1999}. He states that the general theories he applies
give explicit error bounds for all of these approximations~\cite[p.~287]{dunster1999}. They use transformed arguments
and prefactors. Our estimates instead compare the run sums with the stated
Bessel forms, with a sign and explicit constants, for every finite chain
and every interior bin.
For fixed $a$ and $y\ge0$, put $u^2=y/b$ and $\beta=b-a-1$. The argument of
the second Jacobi polynomial is $1-2\tilde x/\beta$, where
$\tilde x=-\beta y/(b-y)\to-y$ as $b\to\infty$.
The fixed-degree Jacobi--Laguerre limit~\cite[Eq.~18.7.21]{dlmf} is locally
uniform in its argument, by coefficient convergence of polynomials of fixed
degree. It makes the last sum tend to
$(y/a)L_{a-1}^{(1)}(-y)=H_a(y)$ of \eqref{eq:edge-polynomial}.
The odd-run contribution, including its factor $2u$, and the other even-run
contribution are respectively $O_a(b^{-1/2})$ and $O_a(b^{-1})$, uniformly
for $y$ in compact sets. Temme~\cite[Section~4.2]{temme1990} also compares
Jacobi with Laguerre polynomials, and Gil, Segura and
Temme~\cite[Section~2 and Remark~1]{gil2018} expand
$P_n^{(\alpha,\beta)}(1-2x/\beta)$ in inverse powers of $\beta$ with
Laguerre coefficients, uniformly for bounded $n$, $\alpha$ and $x$ (complex
$x$ included). This covers the fixed degree $a-1$ of
Section~\ref{sec:allbin-edge}, while Theorem~\ref{thm:allbin-expansion} lets
the degree grow, uniformly in $b\ge a$. Also, Theorem~\ref{thm:edge-laguerre}
is about the root of $H_a(y)=1$, a level of the Laguerre polynomial at the
negative argument $-y$, and not about its zeros. Here $b-a$ can be as large as
$N-2$, and Theorem~\ref{thm:allbin-bessel} gives an explicit relative error at
every $N$.
\end{remark}

\subsection{Every growing distance from an edge}\label{sec:allbin-growing}

The Bessel root satisfies an equation of the form
$z+\frac12\log z=\ell+C+B/z+\dots$. The next lemma inverts such equations.

\begin{lemma}[inverting $z+\frac12\log z$]\label{lem:inversion}
Fix $K>0$. There is a constant $C_K$ with the following property. Let $z\ge1$
and $\ell\ge2$, and suppose that $z+\tfrac12\log z=\ell+C+B/z+\omega$ with
$|C|,|B|\le K$ and $|\omega|\le1$. Then
\[
\Bigl|z-\ell+\tfrac12\log\ell-C-\frac{\frac14\log\ell-\frac12C+B}{\ell}\Bigr|
\le C_K\Bigl(\frac{(\log\ell)^2}{\ell^2}+|\omega|\Bigr).
\]
\end{lemma}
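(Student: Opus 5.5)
The plan is a two-pass bootstrap: first a crude a priori bound $z=\ell+O_K(\log\ell)$, then two rounds of Taylor expansion of $\frac12\log z$ and $B/z$ about $z=\ell$. Throughout, all implied constants depend only on $K$.

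\emph{Step 1: crude localization.} Since $z\ge1$ we have $\log z\ge0$ and $B/z\le K$. The equation therefore gives
\[
z\le\ell+C+B/z+\omega\le\ell+2K+1.
\]
In the other direction,
\[
z\ge\ell-2K-1-\tfrac12\log z\ge\ell-2K-1-\tfrac12\log(\ell+2K+1).
\]
Hence $d:=z-\ell$ satisfies $|d|\le D_K\log\ell$ for $\ell\ge2$.

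\emph{Step 2: reduction to large $\ell$.} Choose $\ell_0=\ell_0(K)$ with $D_K\log\ell\le\ell/2$ for $\ell\ge\ell_0$. For $2\le\ell\le\ell_0$, the quantity inside the absolute value on the left of the claimed inequality is bounded by a constant depending only on $K$, because $|d|\le D_K\log\ell_0$, $|C|\le K$ and $\frac{\frac14\log\ell-\frac12C+B}{\ell}$ is bounded. On the other hand $(\log\ell)^2/\ell^2\ge(\log 2)^2/\ell_0^2>0$ there. Enlarging $C_K$ therefore covers this range, and we may assume $\ell\ge\ell_0$, so that $z\ge\ell/2$ and $|d|/\ell\le\frac12$.

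\emph{Step 3: first expansion.} Write $\frac12\log z=\frac12\log\ell+\frac12\log(1+d/\ell)$. Using $|\log(1+t)-t|\le t^2$ for $|t|\le\frac12$, and $|B/z-B/\ell|\le2K|d|/\ell^2$, the equation becomes
\[
d=-\tfrac12\log\ell+C+\frac B\ell-\frac d{2\ell}+\omega+O\Bigl(\frac{(\log\ell)^2}{\ell^2}\Bigr).
\]
Inserting only $|d|\le D_K\log\ell$ on the right gives the first-order statement
\[
d=-\tfrac12\log\ell+C+O\Bigl(\frac{\log\ell}{\ell}+|\omega|\Bigr).
\]

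\emph{Step 4: second pass.} Substitute this first-order form back into the term $-d/(2\ell)$:
\[
-\frac d{2\ell}=\frac{\frac14\log\ell-\frac12C}{\ell}+O\Bigl(\frac{\log\ell}{\ell^2}+\frac{|\omega|}{\ell}\Bigr).
\]
Collecting terms gives exactly the claimed expansion, with error $O\bigl((\log\ell)^2/\ell^2+|\omega|\bigr)$.

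The only delicate point is Step 1: it must be uniform, with constants depending only on $K$, and it uses $z\ge1$ so that $\log z\ge0$ and $|B/z|\le K$. The rest is routine bookkeeping of Taylor remainders.
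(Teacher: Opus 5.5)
Your proposal is correct and follows the paper's proof essentially step for step. Both arguments use $z\ge1$ to get the a priori bound $|z-\ell|=O_K(\log\ell)$ and absorb bounded $\ell$ into the constant. They then expand $\tfrac12\log z$ and $B/z$ about $\ell$ for $|z-\ell|\le\ell/2$, and feed the first-order solution back into the term $-(z-\ell)/(2\ell)$ to get the $1/\ell$ correction.
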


\begin{proof}
Put $w=z-\ell$. As $z\ge1$, $\log z\ge0$ and $|B/z|\le K$, so $z\le\ell+2K+1$
and $|w|\le2K+1+\frac12\log(\ell+2K+1)=O_K(\log\ell)$. This proves the claim
for $\ell$ bounded in terms of $K$, since then both sides are $O_K(1)$ and
$(\log\ell)^2/\ell^2$ is bounded below. For larger $\ell$ we have
$|w|\le\ell/2$, so $\log z=\log\ell+w/\ell+O((w/\ell)^2)$ and
$B/z=B/\ell+O_K(|w|/\ell^2)$, and
$w=C-\frac12\log\ell-w/(2\ell)+B/\ell+\omega+O_K((\log\ell)^2/\ell^2)$. This
gives first $w=d+O_K(\log\ell/\ell+|\omega|)$ with $d=C-\frac12\log\ell$, and
then, putting this back into the term $w/(2\ell)$,
$w=d+(B-d/2)/\ell+O_K((\log\ell)^2/\ell^2+|\omega|)$.
\end{proof}

\begin{theorem}[uniform crossing expansion]\label{thm:allbin-expansion}
Let $1\le a\le b$, let $A$ and $\eta$ be as in Section~\ref{sec:allbin-bessel}, and put
\[
 \ell=\log A,\qquad
 C(\eta)=\tfrac12\log(8\pi)-\log(1+\eta),\qquad
 B(\eta)=\frac{3-\eta}{8(1+\eta)}.
\]
As $a\to\infty$, uniformly over $b\ge a$,
\begin{align}
 2\sqrt{ab}\,u_{a,b}
 ={}&\ell-\tfrac12\log\ell+C(\eta)\notag\\
 &+\frac{\tfrac14\log\ell-\tfrac12C(\eta)+B(\eta)}{\ell}
 +O\left(\frac{(\log\ell)^2}{\ell^2}+\frac{(\log a)^2}{a}\right).
 \label{eq:allbin-expansion}
\end{align}
That is, there are $a_0$ and $K'$ such that the error term is at most $K'$
times the bracket for all $a\ge a_0$ and all $b\ge a$. The same holds with
$1-p_{a,b}$ in place of $u_{a,b}$.
\end{theorem}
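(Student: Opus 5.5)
The plan is to reduce the statement to the Bessel root $\hat x_{a,b}$ of \eqref{eq:allbin-bessel-root} using Corollary~\ref{cor:allbin-root-bound}. We then expand the equation $G_\eta(\hat x)=A$ at large argument, uniformly in $\eta\in(0,1]$, and invert it with Lemma~\ref{lem:inversion}. The uniformity over $b\ge a$ comes from $a/2\le A\le a$: this gives $\ell=\log a+O(1)$, and every error will be controlled by $a$ alone. The quantity $2\sqrt{ab}\,u_{a,b}$ equals $2x_{a,b}$, so we work with $z=2\hat x_{a,b}$ and then transfer to $2x_{a,b}$.

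\emph{Step 1: a priori size of $x_{a,b}$.} Following the proof of Theorem~\ref{thm:root}(c), take $x_\pm=(1\pm\theta)\ell/2$. The bound $G_\eta(x)\le x(1+\eta)e^{2x}$ gives $\widehat S_{a,b}(x_-/s)<1$, hence $S_{a,b}(x_-/s)<1$ by Theorem~\ref{thm:allbin-bessel}. At $x_+$ we use $G_\eta(x)\ge xI_1(2x)$ together with the large-argument form of $I_1$. The deficit of Theorem~\ref{thm:allbin-bessel} at $x_+$ is at most $x_+^2/(2A)+x_+/s=O(\ell^2/a)$, so $S_{a,b}(x_+/s)>1$. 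Hence $x_{a,b}\asymp\ell$ for all large $a$, uniformly in $b\ge a$. Consequently $\delta_{a,b}=x_{a,b}^2/(2A)+x_{a,b}/s\le x_{a,b}^2/a+x_{a,b}/a=O((\log a)^2/a)$, and Corollary~\ref{cor:allbin-root-bound} gives $0\le 2x_{a,b}-2\hat x_{a,b}=O((\log a)^2/a)$.

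\emph{Step 2: the Bessel equation.} We use $I_\nu(2x)=\frac{e^{2x}}{\sqrt{4\pi x}}\bigl(1-\frac{4\nu^2-1}{16x}+O(x^{-2})\bigr)$ for $\nu=0,1$. This gives
\[
G_\eta(x)=\frac{x\,e^{2x}}{\sqrt{4\pi x}}\Bigl((1+\eta)-\frac{3-\eta}{16x}+O(x^{-2})\Bigr),
\]
with the $O$ uniform in $\eta\in[0,1]$, because $\eta$ enters linearly and $1+\eta\ge1$. Take logarithms in $G_\eta(\hat x)=A$ and put $z=2\hat x$. The result is
\[
z+\tfrac12\log z=\ell+\tfrac12\log(8\pi)-\log(1+\eta)+\frac{B(\eta)}{z}+\omega,\qquad \omega=O(z^{-2})=O(\ell^{-2}),
\]
which is exactly the form with $C(\eta)$ and $B(\eta)$ as stated. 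Since $|C|$ and $|B|$ are bounded by an absolute $K$, Lemma~\ref{lem:inversion} applies for large $a$ (so $z\ge1$ and $\ell\ge2$). It yields \eqref{eq:allbin-expansion} for $z=2\hat x_{a,b}$ with error $O((\log\ell)^2/\ell^2)$. Adding the Step~1 error gives the claim for $2\sqrt{ab}\,u_{a,b}$.

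\emph{Step 3: from $u$ to $1-p$.} We have $1-p_{a,b}=u_{a,b}/(1+u_{a,b})$. Therefore $2s\bigl(u_{a,b}-(1-p_{a,b})\bigr)\le 2s\,u_{a,b}^2=O(\ell^2/s)=O((\log a)^2/a)$, using $s\ge a$, and this is absorbed in the error term.

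The main obstacle is Step~2's uniformity. The large-argument expansions must hold with an $O(x^{-2})$ remainder that is uniform in $\eta$, including the degenerate edge regime $\eta\to0$ as $b/a\to\infty$. They also need a lower bound on $\hat x$ so that the expansion is in its valid range. I would handle this by taking the fixed-$\nu$ asymptotics for $I_0$ and $I_1$ separately, which are independent of $\eta$, and combining them linearly; Step~1 supplies the lower bound on $\hat x$.
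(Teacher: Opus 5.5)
Your proposal is correct and follows the paper's proof essentially step for step. You bound $x_{a,b}$ a priori, pass to the Bessel root via Corollary~\ref{cor:allbin-root-bound}, expand $I_1(z)+\eta I_0(z)$ uniformly in $\eta$ to get $z+\frac12\log z=\ell+C(\eta)+B(\eta)/z+O(z^{-2})$, invert with Lemma~\ref{lem:inversion}, and absorb $2s\,u_{a,b}^2=O((\log a)^2/a)$ to handle $1-p_{a,b}$. The only cosmetic difference is that the paper uses just the one-sided bound $x_{a,b}\le\log a$ and gets $z=2\hat x_{a,b}\to\infty$ directly from $G_\eta(\hat x_{a,b})=A$, whereas your two-sided bracket $x_\pm=(1\pm\theta)\ell/2$ supplies both facts at once.
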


Here $A=ab/(a+b)$ is half the harmonic mean of $a$ and $b$. There is no
condition on how $a$ compares with $N$, so the theorem covers a bin at
distance $\sqrt N$ from the edge as well as the middle. The proof uses
Corollary~\ref{cor:allbin-root-bound} to pass to the Bessel root $\hat x_{a,b}$. The
large-argument form of $I_0$ and $I_1$ then turns
\eqref{eq:allbin-bessel-root} into an equation that Lemma~\ref{lem:inversion}
solves.

\begin{proof}
Here $a/2\le A\le a$, $s\ge a$ and $0<\eta\le1$.

\emph{Step 1. The crossing is at $x_{a,b}\le\log a$.} At $x=\log a$ the bound
of Theorem~\ref{thm:allbin-bessel} is at most $2(\log a)^2/a$, while
$\widehat S_{a,b}=G_\eta(x)/A\ge xI_1(2x)/a\to\infty$, both uniformly in $b$. So $S_{a,b}>1$ there, and since
$S_{a,b}$ increases, $x_{a,b}\le\log a$ for all $a\ge a_0$ and all $b\ge a$. Then
$\delta_{a,b}\le2(\log a)^2/a$, and Corollary~\ref{cor:allbin-root-bound}
gives
\begin{equation}\label{eq:allbin-root-error}
 x_{a,b}-\hat x_{a,b}=O((\log a)^2/a).
\end{equation}

\emph{Step 2. The Bessel root.} Write $z=2\hat x_{a,b}$, so $z\to\infty$ as
$A\to\infty$, uniformly in $\eta$. By the large-argument form of $I_0$ and
$I_1$ \cite[Eq.~10.40.1]{dlmf},
$I_1(z)+\eta I_0(z)=(1+\eta)e^z(2\pi z)^{-1/2}(1-B(\eta)/z+O(z^{-2}))$
uniformly in $0\le\eta\le1$. Taking logarithms in
\eqref{eq:allbin-bessel-root} gives
$z+\frac12\log z=\ell+C(\eta)+B(\eta)/z+O(z^{-2})$. As $|C(\eta)|\le2$ and
$0<B(\eta)\le\frac38$, Lemma~\ref{lem:inversion} with $K=2$ expands $z$.

\emph{Step 3. Back to the crossing.} By \eqref{eq:allbin-root-error} the
expansion moves to $2x_{a,b}=2\sqrt{ab}\,u_{a,b}$. Finally
$2s(u-u/(1+u))\le2x^2/s=O((\log a)^2/a)$ at $u=u_{a,b}$, which gives the same
expansion for $1-p_{a,b}=u_{a,b}/(1+u_{a,b})$.
\end{proof}

Hence, uniformly in $b$,
\begin{equation}\label{eq:allbin-leading}
 1-p_{a,b}\sim\frac{\log a}{2\sqrt{ab}}
 \qquad(a\to\infty,\ b\ge a).
\end{equation}
If $a\to\infty$ with $a=o(b)$, then $\eta\to0$ and
$\log A=\log a+o(1)$, so
\begin{equation}\label{eq:allbin-growing-edge}
 2\sqrt{ab}(1-p_{a,b})
 =\log a-\tfrac12\log\log a+\tfrac12\log(8\pi)+o(1).
\end{equation}
If $a/N\to\alpha\in(0,\frac12]$, however slowly, then
\[
 2\sqrt{a(N-a)}(1-p_{a,N-a})
 =\log N-\tfrac12\log\log N+\log\{\alpha(1-\alpha)\}+C(2\sqrt{\alpha(1-\alpha)})+o(1).
\]
At the center for even $N$, $a=b=N/2$ gives $A=N/4$ and $\eta=1$, and changing from
$\log A$ to $\log N$ recovers \eqref{eq:thirdorder}.

\begin{figure}[t]
\centering
\includegraphics{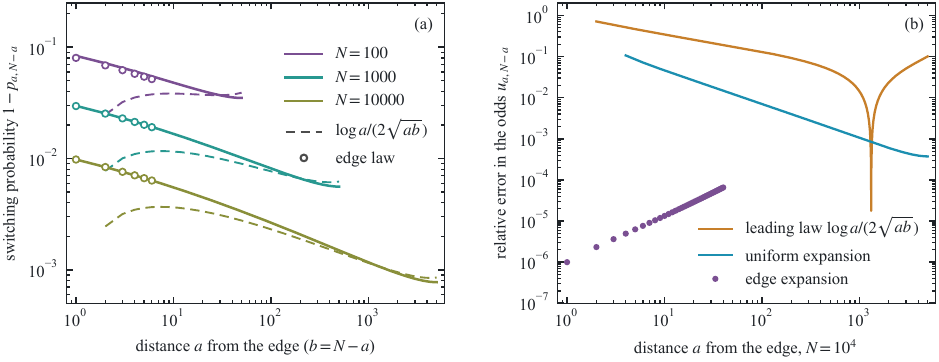}
\caption{(a) The switching probability $1-p_{a,N-a}$ at the crossing of
each bin, from the exact run polynomial, against its distance $a$ from the
nearer edge, for $N=100$, $1000$ and $10^4$. The dashed lines are the leading
law \eqref{eq:allbin-leading}. It is asymptotic in $a$, so it departs from
the exact curves near the edge. The circles are the first two terms of the
edge law
\eqref{eq:edge-q-expansion} for $a\le6$. The right end of each line is the
central crossing $1-p_N$. (b) For $N=10^4$, the absolute relative error in the odds
$u_{a,N-a}$ of the leading law $(\log a)/(2\sqrt{ab})$ with $b=N-a$, of the main terms of
the uniform expansion \eqref{eq:allbin-expansion} for $a\ge4$, and of the
first three terms of the edge expansion \eqref{eq:edge-crossing-expansion}
for $a\le40$. The dip near $a=1300$ is where the error of the leading law changes sign.}
\label{fig:allbin}
\end{figure}

\subsection{Fixed bins and the Laguerre edge law}\label{sec:allbin-edge}

In Ising terms, bin $a$ with $a$ fixed is magnetization $2a-N$, a few up spins
in a sea of down spins, and we ask when it is as likely as full order. Let
$b\to\infty$. A run of letters $\sR$ away from the
ends needs two switches, which gives a factor $u^2$, and it can go in about $b$
places. So the right variable is $y=bu^2$. Define
\begin{equation}\label{eq:edge-polynomial}
 H_a(y)=\sum_{r=0}^{a-1}\binom{a-1}{r}\frac{y^{r+1}}{(r+1)!}
 =\frac ya L_{a-1}^{(1)}(-y),
\end{equation}
with the standard Laguerre polynomial $L_{a-1}^{(1)}$. In a field these
polynomials give the classical P\'olya--Aeppli law
(Section~\ref{sec:field}). Let $y_a$ be the
positive root of $H_a(y_a)=1$ and $c_a=\sqrt{y_a}$. Then $y_1=1$,
$y_2=\sqrt3-1$, and $y_a$ decreases strictly in $a$, by coefficient
comparison.

\begin{theorem}[fixed-edge expansion]\label{thm:edge-laguerre}
For each fixed positive integer $a$, as $b\to\infty$,
\begin{equation}\label{eq:edge-crossing-expansion}
 u_{a,b}=\frac{c_a}{\sqrt b}-\frac1b
 +\frac{\gamma_a}{b^{3/2}}+O_a(b^{-2}),
\end{equation}
where
\[
 \gamma_a=
 \frac{1+y_a+(y_a+y_a^2/2)H_a''(y_a)/H_a'(y_a)}{2c_a}.
\]
Thus the second coefficient in the odds expansion is $-1$ for every fixed
bin. For the switching probability,
\begin{equation}\label{eq:edge-q-expansion}
 1-p_{a,b}=\frac{c_a}{\sqrt b}-\frac{1+c_a^2}{b}+O_a(b^{-3/2}).
\end{equation}
\end{theorem}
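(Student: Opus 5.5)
The plan is to rescale the exact polynomial \eqref{eq:allbin-polynomial} in the edge variable and then apply the implicit function theorem to the rescaled equation. Set $\epsilon=b^{-1/2}$ and $u=v\epsilon$, so that $bu^2=v^2$. Since $a$ is fixed, the sum in \eqref{eq:allbin-polynomial} has only the terms $0\le r\le a-1$. Each coefficient is a polynomial in $b$ of degree at most $r+1$, because $\binom{b-1}{r}$ is a polynomial in $b$ of degree $r$. Expanding each coefficient in powers of $b$ turns $S_{a,b}(v\epsilon)$ into a polynomial in $(v,\epsilon)$. To see this, write the odd term $2u^{2r+1}\binom{a-1}r\binom{b-1}r$ as $2v^{2r+1}\epsilon\,\binom{a-1}r\,\epsilon^{2r}\binom{b-1}r$. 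Since $\epsilon^{2r}\binom{b-1}r$ is a polynomial in $\epsilon^2$, this term is $\epsilon$ times a polynomial in $(v,\epsilon^2)$. The even term is handled the same way, using $(N-2-2r)\epsilon^2=1+(a-2-2r)\epsilon^2$.

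This gives
\[
S_{a,b}(v\epsilon)=\Psi(v,\epsilon)=\Psi_0(v)+\epsilon\Psi_1(v)+\epsilon^2\Psi_2(v)+\epsilon^3\Psi_3(v)+O(\epsilon^4),
\]
uniformly for $v$ in compact subsets of $(0,\infty)$. The terms are identified as follows.
\begin{itemize}
\item $\Psi_0(v)=\sum_r\binom{a-1}r\frac{v^{2r+2}}{r!(r+1)}=H_a(v^2)$, using $\frac1{r!(r+1)}=\frac1{(r+1)!}$. This agrees with \eqref{eq:edge-polynomial} and with Remark~\ref{rem:jacobi}.
\item $\Psi_1(v)=2\sum_r\binom{a-1}r\frac{v^{2r+1}}{r!}$, coming from the odd terms.
\item $\Psi_2$ collects two contributions: the $1/b$ correction of $\binom{b-1}r$, which is $-\binom{r+1}2/r!\cdot b^{r-1}$, and the $(a-2-2r)$ part of the even coefficient.
\item $\Psi_3$ comes from the odd terms' $1/b$ corrections.
\end{itemize}

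The key step is to check that $\Psi_1$ is tied to $\Psi_0$. Put $y=v^2$. Then $\Psi_1(v)=2v\sum_r\binom{a-1}r y^r/r!=2v\,H_a'(y)$, because $H_a'(y)=\sum_r\binom{a-1}r y^r/r!$. On the other hand, $\frac{d}{dv}\Psi_0=2vH_a'(y)$. So $\Psi_1=\partial_v\Psi_0$.

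Now solve $\Psi(v,\epsilon)=1$. At $\epsilon=0$ the root is $v_0=c_a$, and it is nondegenerate: $\partial_v\Psi_0(c_a)=2c_aH_a'(y_a)>0$. Since $\Psi$ is smooth (indeed polynomial), the implicit function theorem gives a smooth branch $v(\epsilon)=c_a+v_1\epsilon+v_2\epsilon^2+O(\epsilon^3)$. This branch coincides with $u_{a,b}\sqrt b$, because $S_{a,b}$ has a unique positive root and a priori $u_{a,b}\sqrt b$ stays bounded and away from $0$. To see the bounds, note that $S_{a,b}\ge S_{1,b}$ coefficientwise by Proposition~\ref{prop:allbin-order}, and that $S_{a,b}\le\widehat S$-type bounds, or simply $\Psi_0$ plus small terms, give an upper estimate.

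The first-order coefficient is then forced by the identity above:
\[
v_1=-\Psi_1(c_a)/\Psi_0'(c_a)=-1.
\]
This is exactly the claim that the coefficient of $1/b$ in \eqref{eq:edge-crossing-expansion} is $-1$. Dividing by $\sqrt b$ gives the first two terms of \eqref{eq:edge-crossing-expansion}.

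The third coefficient $\gamma_a=v_2$ comes from the second-order term of the implicit expansion:
\[
v_2=-\frac{\tfrac12\Psi_0''v_1^2+\Psi_1'v_1+\Psi_2}{\Psi_0'}\quad\text{at }c_a.
\]
Here $\Psi_1'=\Psi_0''$, so the first two terms combine to $-\frac12\Psi_0''$. This leaves the routine but careful task of computing $\Psi_2(c_a)$ explicitly in terms of $H_a$, $H_a'$ and $H_a''$ at $y_a$, and of converting $\Psi_0''(v)=2H_a'+4yH_a''$. I expect this bookkeeping, and matching it to the stated formula for $\gamma_a$, to be the main obstacle. I would try to express $\Psi_2$ through $y^2H_a''$ and $yH_a'$ by recognising sums such as $\sum_r\binom{a-1}r r\,y^{r+1}/(r+1)!$ as $yH_a'-H_a$, and then use $H_a(y_a)=1$ to simplify.

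Finally, \eqref{eq:edge-q-expansion} follows from $1-p=u/(1+u)=u-u^2+O(u^3)$ with $u^2=c_a^2/b+O(b^{-3/2})$. This gives the coefficient $-(1+c_a^2)$ of $1/b$.
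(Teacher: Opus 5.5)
Your route is the paper's proof written in $v=\sqrt{bu^2}$ instead of $y=bu^2$: you rescale \eqref{eq:allbin-polynomial} to the edge variable, apply the implicit function theorem at $(\epsilon,v)=(0,c_a)$ and match coefficients. Your identity $\Psi_1=\Psi_0'$ plays the role of the paper's first coefficient $d_1=-2c_a$ in $y$. Uniqueness of the positive root already identifies the branch, so the a priori bounds you sketch are not needed.

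The bookkeeping you left open takes two lines. Your $\Psi_2$ is
\[
\Psi_2=\sum_r\binom{a-1}{r}\frac{y^{r+1}}{(r+1)!}\Bigl(a-2-2r-\tfrac{r(r+1)}{2}\Bigr),\qquad y=v^2 .
\]
Write $a-2-2r=(a-1-r)-(r+1)$ and use $\binom{a-1}{r}\frac{a-1-r}{r+1}=\binom{a-1}{r+1}$. This gives
\[
\Psi_2=\Bigl(y-\tfrac{y^2}{2}\Bigr)H_a''(y)-yH_a'(y),
\]
hence
\[
\tfrac12\Psi_0''-\Psi_2=(1+y)H_a'+\Bigl(y+\tfrac{y^2}{2}\Bigr)H_a''.
\]
Dividing by $\Psi_0'(c_a)=2c_aH_a'(y_a)$ gives exactly $\gamma_a$. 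You do not need $H_a(y_a)=1$ for this.
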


For bin $1$, where $y_1=1$ and $H_1''=0$, this gives
$u_{1,b}=b^{-1/2}-b^{-1}+b^{-3/2}+\dots$, the expansion of the exact root
$1/(1+\sqrt b)$ of Proposition~\ref{prop:adjacent}.

\begin{proof}
Put $\theta=b^{-1/2}$, $y=bu^2$ and $V_r=U_r(b)$. For $b\ge a$,
\eqref{eq:allbin-polynomial} reads
\begin{align}
 S_{a,b}(u)={}&2\sqrt{y/b}\sum_r\binom{a-1}{r}\frac{y^r}{r!}V_r\notag\\
 &+\frac yb\sum_r\binom{a-1}{r+1}\frac{y^r}{r!}V_r
 +y\sum_r\binom{a-1}{r}\frac{y^r}{(r+1)!}V_{r+1}.
 \label{eq:edge-exact-series}
\end{align}
These sums have at most $a$ terms, and with $V_r=1$ they are $H_a'(y)$,
$H_a''(y)$ and $H_a(y)$. Expanding $V_r=1-r(r+1)/(2b)+O_a(b^{-2})$ and using
$\sum_r(r+1)(r+2)\binom{a-1}{r}\frac{y^{r+1}}{(r+1)!}=y^2H_a''(y)+2yH_a'(y)$
gives, uniformly near $y=y_a$,
\begin{align}
 S_{a,b}(u)={}&H_a(y)+2\sqrt y\,H_a'(y)\theta\notag\\
 &+\{(y-y^2/2)H_a''(y)-yH_a'(y)\}\theta^2+O_a(\theta^3).
 \label{eq:edge-implicit-expansion}
\end{align}
With $b=\theta^{-2}$ the right side of \eqref{eq:edge-exact-series} is
analytic in $(\theta,y)$ near $(0,y_a)$, and $H_a'(y_a)>0$. So the
implicit-function theorem gives an analytic solution
$y(\theta)=y_a+d_1\theta+d_2\theta^2+O_a(\theta^3)$ of $S_{a,b}=1$,
and by the uniqueness of the crossing it is the value of $y$ at $u=u_{a,b}$.
Matching coefficients in \eqref{eq:edge-implicit-expansion} gives $d_1=-2c_a$
and $d_2=2+y_a+(y_a+y_a^2/2)H_a''(y_a)/H_a'(y_a)$. Expanding $u=\theta\sqrt y$
and then $u/(1+u)$ proves both expansions.
\end{proof}

The next theorem shows that for large $a$ the edge constant $2\sqrt{ay_a}$,
the limit of $2\sqrt{ab}\,u_{a,b}$ as $b\to\infty$, has the expansion of the
growing-distance law \eqref{eq:allbin-growing-edge}.

\begin{theorem}[matching the edge and Bessel regimes]\label{thm:edge-matching}
For every positive integer $a$ and every $x>0$,
\begin{equation}\label{eq:laguerre-bessel-bound}
 0\le1-\frac{aH_a(x^2/a)}{xI_1(2x)}\le\frac{x^2}{2a}.
\end{equation}
Consequently, with $\ell=\log a$ and with $C(0)=\tfrac12\log(8\pi)$ from
Theorem~\ref{thm:allbin-expansion}, as $a\to\infty$,
\begin{align}
 2\sqrt{a y_a}
 ={}&\ell-\tfrac12\log\ell+C(0)\notag\\
 &+\frac{\tfrac14\log\ell-\tfrac12C(0)+3/8}{\ell}
 +O\left(\frac{(\log\ell)^2}{\ell^2}+\frac{(\log a)^2}{a}\right).
 \label{eq:edge-matching-expansion}
\end{align}
\end{theorem}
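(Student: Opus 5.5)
The plan is to prove \eqref{eq:laguerre-bessel-bound} term by term with the clipped products of Lemma~\ref{lem:clipped}, exactly as in Theorem~\ref{thm:bessel}(i), and then to copy Steps~2 and~3 of the proof of Theorem~\ref{thm:allbin-expansion} with $\eta=0$.

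\emph{The bound.} Put $y=x^2/a$ and write $\binom{a-1}{r}=a^rU_r(a)/r!$ as in Lemma~\ref{lem:clipped}(b). Then
\[
aH_a(x^2/a)=a\sum_{r\ge0}\binom{a-1}{r}\frac{x^{2r+2}}{a^{r+1}(r+1)!}=\sum_{r\ge0}\frac{x^{2r+2}}{r!\,(r+1)!}\,U_r(a).
\]
With $U_r=1$ this is exactly $xI_1(2x)$, by the series in Section~\ref{sec:setting}. Lemma~\ref{lem:clipped}(b) gives $0\le1-U_r(a)\le r(r+1)/(2a)$, including for $r\ge a$, where $U_r(a)=0$. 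So $xI_1(2x)-aH_a(x^2/a)$ lies between $0$ and
\[
\frac1{2a}\sum_{r\ge0}r(r+1)\frac{x^{2r+2}}{r!(r+1)!}=\frac{x}{2a}\,x^2I_1(2x),
\]
by the second identity of Lemma~\ref{lem:clipped}(c). Dividing by $xI_1(2x)>0$ gives \eqref{eq:laguerre-bessel-bound}.

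\emph{The expansion.} Let $x_a=\sqrt{ay_a}$, so that $aH_a(x_a^2/a)=a$. Let $\hat x_a$ be the root of $G_0(\hat x_a)=\hat x_aI_1(2\hat x_a)=a$, which is unique by \eqref{eq:allbin-logderivative} with $\eta=0$. The bound gives $a\le G_0(x_a)\le a/(1-x_a^2/(2a))$, so $x_a\ge\hat x_a$.

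To bound the gap, note first that at $x=\log a$ the ratio $aH_a/a\ge G_0(\log a)(1-(\log a)^2/(2a))/a$ exceeds $1$ for large $a$. The Laguerre side $H_a$ is increasing, so $x_a\le\log a$. Integrating $(\log G_0)'\ge2$ from $\hat x_a$ to $x_a$ then gives
\[
0\le x_a-\hat x_a\le-\tfrac12\log\bigl(1-(\log a)^2/(2a)\bigr)=O\bigl((\log a)^2/a\bigr).
\]

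For $\hat x_a$, put $z=2\hat x_a$. The large-argument form of $I_1$ gives
\[
\tfrac z2I_1(z)=\tfrac12\sqrt{z/(2\pi)}\,e^z\bigl(1-\tfrac3{8z}+O(z^{-2})\bigr).
\]
Setting this equal to $a$ and taking logarithms yields
\[
z+\tfrac12\log z=\log a+\tfrac12\log(8\pi)+\frac{3/8}{z}+O(z^{-2}).
\]
This is the equation of Step~2 of Theorem~\ref{thm:allbin-expansion} with $C(0)=\frac12\log(8\pi)$, $B(0)=\frac38$ and $\ell=\log a$ in place of $\log A$. Lemma~\ref{lem:inversion} with $K=2$ expands $z$, and the $O(z^{-2})$ term is $O(\ell^{-2})$. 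Replacing $2\hat x_a$ by $2x_a=2\sqrt{ay_a}$ costs $O((\log a)^2/a)$, which gives \eqref{eq:edge-matching-expansion}.

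\emph{Main obstacle.} The expected main obstacle is the first step, namely spotting that $aH_a(x^2/a)$ is the $I_1$ series damped by exactly one clipped product $U_r(a)$. Once that identification is made, everything else is a direct reuse of earlier arguments. The only remaining care is to check that $z\to\infty$, which follows from $G_0(\hat x_a)=a$, so that the large-argument form applies.
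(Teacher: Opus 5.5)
Your proposal is correct and follows the paper's proof step for step. You use the identity $aH_a(x^2/a)=\sum_{r\ge0}U_r(a)\,x^{2r+2}/(r!\,(r+1)!)$ with Lemma~\ref{lem:clipped}(b),(c) for the bound, then $x_a\le\log a$, the root comparison of Corollary~\ref{cor:allbin-root-bound} with $G_0$, the large-argument form of $I_1$ giving $C(0)=\tfrac12\log(8\pi)$ and $B(0)=\tfrac38$, and finally Lemma~\ref{lem:inversion}, adding only slightly more explicit detail than the paper at each step.
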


\begin{proof}
With $U_r=U_r(a)$, $aH_a(x^2/a)=x\sum_{r\ge0}U_rx^{2r+1}/(r!(r+1)!)$, so
Lemma~\ref{lem:clipped}(b) and the second identity of
Lemma~\ref{lem:clipped}(c) give \eqref{eq:laguerre-bessel-bound}. Put
$\omega_a=\sqrt{ay_a}$, so $aH_a(\omega_a^2/a)=a$, and $\omega_a<\log a$ for large $a$ by
\eqref{eq:laguerre-bessel-bound}. So $a\le G_0(\omega_a)\le a/(1-\omega_a^2/(2a))$, and
as in Corollary~\ref{cor:allbin-root-bound}, $\omega_a-\hat\omega_a=O((\log a)^2/a)$, where
$G_0(\hat\omega_a)=a$. So $z=2\hat\omega_a$ satisfies
$z+\frac12\log z=\ell+C(0)+3/(8z)+O(z^{-2})$ by \cite[Eq.~10.40.1]{dlmf}, as in
the proof of Theorem~\ref{thm:allbin-expansion} with $\eta=0$ and $A=a$, and
Lemma~\ref{lem:inversion} gives the expansion.
\end{proof}

Figure~\ref{fig:allbin} compares these laws with the exact crossings.

\subsection{The moving boundary layer}\label{sec:allbin-layer}

The crossings also give the shape of a whole row at odds $u=v_N>0$. Between
the central crossing ($v_N$ about $(\log N)/N$) and the adjacent one ($v_N$
about $N^{-1/2}$), the middle is above the endpoints and a layer of bins next
to each edge is below them. (This is a layer of bins near each end of the
row, that is, in the magnetization, not a region of the chain.) The width of
this layer is
\[
 \mathcal L_N=\#\{1\le k\le\lfloor N/2\rfloor:P_N(k)<P_N(0)\}.
\]

\begin{corollary}[boundary-layer width]\label{cor:boundary-layer}
Suppose that, as $N\to\infty$,
\[
 \Omega_N=\frac1{\sqrt N\,v_N}\longrightarrow\infty,
 \qquad \frac{\log\Omega_N}{Nv_N}\longrightarrow0.
\]
Then
\begin{equation}\label{eq:boundary-layer-width}
 \mathcal L_N\sim\Omega_N^2\log^2\Omega_N
 =\frac{\log^2(1/(\sqrt N\,v_N))}{Nv_N^2}.
\end{equation}
In particular, if $v_N=N^{-\kappa}$ with $1/2<\kappa<1$, then
\[
 \mathcal L_N\sim(\kappa-\tfrac12)^2N^{2\kappa-1}\log^2N.
\]
By symmetry, the total number of interior bins below the endpoint height
is $2\mathcal L_N$ for all sufficiently large $N$.
\end{corollary}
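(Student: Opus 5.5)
We reduce $\mathcal L_N$ to a threshold in the distance $a$ from the edge, and then locate that threshold with the leading law of Theorem~\ref{thm:allbin-expansion}.

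\emph{Reduction.} For $1\le k\le N-1$ we have $P_N(k)=P_N(0)S_{k,N-k}(v_N)$. Since $S_{k,N-k}$ is strictly increasing with its unique root at $u_{k,N-k}$, bin $k$ is below the endpoints exactly when $v_N<u_{k,N-k}$. By Proposition~\ref{prop:allbin-order}, $u_{a,N-a}$ decreases strictly in $a$ for $1\le a\le\lfloor N/2\rfloor$. The step from $a$ to $a+1$ needs $a<b-1$; when $a+1=b-1$ or $a+1=b$, the two bins either tie by symmetry or are both middle bins. Hence the set counted by $\mathcal L_N$ is an initial segment $\{1,\dots,\mathcal L_N\}$. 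So it suffices to show, for each fixed $\epsilon\in(0,1)$ and all large $N$, that $u_{a,N-a}>v_N$ at $a_-=\lfloor(1-\epsilon)\Omega_N^2\log^2\Omega_N\rfloor$ and $u_{a,N-a}<v_N$ at $a_+=\lceil(1+\epsilon)\Omega_N^2\log^2\Omega_N\rceil$.

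\emph{Scale of $a_\pm$.} We have $\Omega_N^2\log^2\Omega_N=\log^2\Omega_N/(Nv_N^2)$, so
\[
\frac{a_\pm}N\approx\Bigl(\frac{\log\Omega_N}{Nv_N}\Bigr)^2\to0
\]
by the second hypothesis. Also $a_\pm\to\infty$ because $\Omega_N\to\infty$. Hence $b=N-a_\pm\sim N$, $A=a_\pm b/N\sim a_\pm$, and $a_\pm\le\lfloor N/2\rfloor$ for large $N$.

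\emph{Locating the threshold.} The leading law \eqref{eq:allbin-leading}, which comes from \eqref{eq:allbin-expansion} and holds uniformly in $b\ge a$, gives $2\sqrt{ab}\,u_{a,b}=\log A\,(1+o(1))=\log a\,(1+o(1))$ as $a\to\infty$. At $a=\lambda\Omega_N^2\log^2\Omega_N$ with $\lambda=1\pm\epsilon$, using $\Omega_N\sqrt N\,v_N=1$,
\[
2\sqrt{ab}\,v_N=2\sqrt{\lambda}\,\Omega_N\log\Omega_N\sqrt{N(1+o(1))}\,v_N=2\sqrt\lambda\,\log\Omega_N\,(1+o(1)).
\]
On the other side,
\[
\log a=2\log\Omega_N+2\log\log\Omega_N+\log\lambda=2\log\Omega_N\,(1+o(1)).
\]
So $u_{a,b}/v_N\to1/\sqrt\lambda$. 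This ratio is $>1$ for $\lambda=1-\epsilon$ and $<1$ for $\lambda=1+\epsilon$, which gives the two inequalities and hence \eqref{eq:boundary-layer-width}. The step that needs care is that the $o(1)$ in \eqref{eq:allbin-leading} is uniform, so it can be applied along the moving sequence $a_\pm(N)$ with $b=N-a_\pm$. Theorem~\ref{thm:allbin-expansion} supplies exactly this uniformity in $b\ge a$. The remaining bookkeeping is the floors and ceilings and checking $\log A-\log a=O(a/N)=o(1)$.

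\emph{Power case.} If $v_N=N^{-\kappa}$, then $\Omega_N=N^{\kappa-1/2}\to\infty$, and $\log\Omega_N/(Nv_N)=(\kappa-\frac12)\log N/N^{1-\kappa}\to0$. Substituting $\Omega_N^2=N^{2\kappa-1}$ and $\log\Omega_N=(\kappa-\frac12)\log N$ gives the stated asymptotics.

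\emph{Total count.} First we check that the middle bins are above the endpoints, that is, $v_N>u_N$. By Theorem~\ref{thm:root}(c), $u_N\sim\log N/N$. Suppose $v_N\le C\log N/N$ along a subsequence. Then $\Omega_N\ge\sqrt N/(C\log N)$, so $\log\Omega_N/(Nv_N)\ge(1+o(1))/(2C)$, which contradicts the second hypothesis. Hence $Nv_N/\log N\to\infty$ and $v_N>u_N$ for large $N$. So the layer $\{1,\dots,\mathcal L_N\}$ does not reach the middle bins. Its mirror image $\{N-\mathcal L_N,\dots,N-1\}$ is disjoint from it, and every bin strictly between the two layers is above the endpoint height. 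Therefore the number of interior bins below the endpoint height is exactly $2\mathcal L_N$.
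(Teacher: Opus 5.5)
Your proof is correct and follows the paper's argument. Monotonicity of $u_{k,N-k}$ from Proposition~\ref{prop:allbin-order} reduces $\mathcal L_N$ to a threshold, and the uniform leading law \eqref{eq:allbin-leading} at $a_\pm=(1\pm\epsilon)\Omega_N^2\log^2\Omega_N$ gives $u_{a_\pm,N-a_\pm}/v_N\to1/\sqrt{1\pm\epsilon}$. The only difference is the total count. The paper just notes that $\mathcal L_N<a_+=o(N)$, so the middle bin is already above the endpoints by the same monotonicity. Your separate check $v_N>u_N$ via Theorem~\ref{thm:root}(c) is therefore correct but unnecessary.
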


\begin{proof}
Set $d_N=\Omega_N^2\log^2\Omega_N$. Then $d_N\to\infty$ and
$d_N/N=(\log\Omega_N/(Nv_N))^2\to0$. Fix $\epsilon\in(0,1)$ and integers
$a_\pm=(1\pm\epsilon)d_N+O(1)$. Then $\log a_\pm\sim2\log\Omega_N$ and
$\sqrt{a_\pm(N-a_\pm)}\,v_N\sim\sqrt{1\pm\epsilon}\,\log\Omega_N$, so
\eqref{eq:allbin-leading} gives
$u_{a_\pm,N-a_\pm}/v_N\to1/\sqrt{1\pm\epsilon}$. Bin $k$ is below the
endpoint exactly when $v_N<u_{k,N-k}$, and $u_{k,N-k}$ decreases strictly for
$k\le N/2$ by Proposition~\ref{prop:allbin-order}. So
$a_-\le\mathcal L_N<a_+$ for large $N$, and we let $\epsilon\downarrow0$.
Finally $\mathcal L_N=o(N)$, so the reflected intervals are disjoint.
\end{proof}

Figure~\ref{fig:layer} shows one row inside this regime and the width
$\mathcal L_N$ for three exponents $\kappa$. If $\mathcal L_N$ is instead divided by the
distance $a$ at which the main terms of \eqref{eq:allbin-expansion}, divided
by $2\sqrt{a(N-a)}$, equal $v_N$, the ratio is within $0.013$ of $1$ for every $N$ in panel (b).

\begin{figure}[t]
\centering
\includegraphics{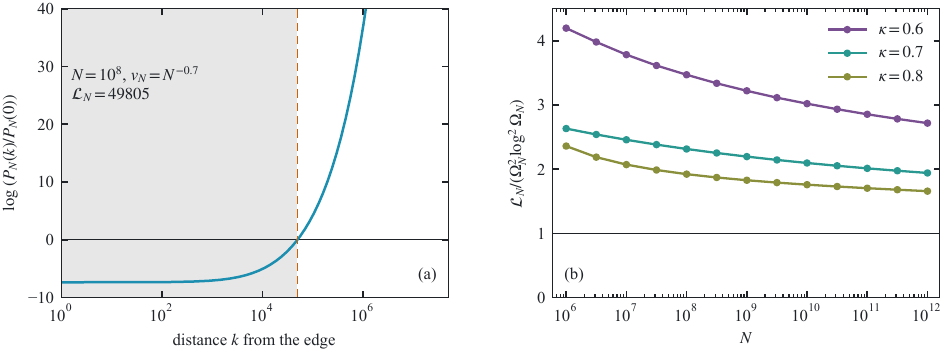}
\caption{(a) The logarithm $\log(P_N(k)/P_N(0))$ of the ratio of each bin to an endpoint,
from the exact run polynomial, against the distance $k$ from the nearer edge,
at $N=10^8$ and odds $v_N=N^{-0.7}$. The shaded bins are below the endpoint
height, and there are $\mathcal L_N=49805$ of them in the left half. (b) The
width $\mathcal L_N$ divided by its leading term $\Omega_N^2\log^2\Omega_N$
of \eqref{eq:boundary-layer-width}, for $v_N=N^{-\kappa}$ ($\kappa=0.6$,
$0.7$ and $0.8$) and $10^6\le N\le10^{12}$. The ratios decrease toward $1$ only slowly.}
\label{fig:layer}
\end{figure}

\section{Periodic boundaries and the crossing window}\label{sec:periodicwindow}

\subsection{Periodic boundaries}\label{sec:periodic}

We now close the sequence of steps into a loop, so that the last step also
interacts with the first. With this closing bond the spins form the periodic
Ising chain, with weight proportional to
$\exp(J\sum_{i=1}^N\sigma_i\sigma_{i+1})$, $\sigma_{N+1}=\sigma_1$ and again
$u=e^{-2J}$. Garc\'{\i}a-Pelayo~\cite[Eq.~(15)]{garciapelayo2009} gives its exact
magnetization law in hypergeometric form. Antal, Droz and R\'acz~\cite{antal2004} give its magnetization law
at a fixed switching scale $\tau$, and Dantchev and Rudnick~\cite{dantchev2022} give exact
coefficients. We find how much the closing bond moves the central crossing,
and then describe the row near the crossing for both boundary conditions.

Write $S^{\mathrm F}_{N,k}=S_{k,N-k}$ and $S^{\mathrm P}_{N,k}$ for the
bin-to-endpoint ratios for free and periodic ends. Put $a=k$, $b=N-k$
and $s=\sqrt{ab}$. Counting cyclic words by their runs gives
\begin{equation}\label{eq:periodicratio}
S^{\mathrm P}_{N,k}(u)=\sum_{r=1}^{\min(a,b)}\frac Nr
 \binom{a-1}{r-1}\binom{b-1}{r-1}u^{2r}.
\end{equation}
To see this, note that a cyclic word with $a$ letters $\sR$ and $b$ letters
$\sL$ switches an even number $2r$ of times and has $r$ runs of each letter.
Choose compositions of $a$ and of $b$ into $r$ positive runs and a position for
the start of a distinguished run of letters $\sR$, and divide by the $r$ choices of
that run. This is the count of~\cite[Appendix~B, Eqs.~(B2') to~(B4)]{garciapelayo2009},
\cite[Eq.~(30)]{antal2004} and~\cite[Eq.~(2.9)]{dantchev2022}. The normalization of the periodic
measure cancels in the ratio. In
particular $S^{\mathrm P}_{N,1}=Nu^2$, which gives the adjacent crossing
$J=(\log N)/4$ of Stepanyan et~al.\@~\cite{stepanyan2024}. Every interior
ratio increases strictly from $0$ to $\infty$ in $u>0$, and the ratios
increase strictly toward the middle, apart from the symmetric middle tie, by
the coefficient comparison in the proof of Proposition~\ref{prop:allbin-order},
applied to \eqref{eq:periodicratio}.

\begin{theorem}[periodic relative error]\label{thm:periodic-bessel}
For $N\ge2$, $1\le k\le N-1$, and $u>0$, define
\[
\widehat S^{\mathrm P}_{N,k}(u)=\frac{Nu}{\sqrt{k(N-k)}}
 I_1\!\left(2u\sqrt{k(N-k)}\right).
\]
Then
\begin{equation}\label{eq:periodicbound}
0\le1-\frac{S^{\mathrm P}_{N,k}(u)}{\widehat S^{\mathrm P}_{N,k}(u)}
\le\frac{Nu^2}{2}.
\end{equation}
\end{theorem}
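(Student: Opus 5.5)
Following the proof of Theorem~\ref{thm:allbin-bessel}, we write the exact periodic ratio \eqref{eq:periodicratio} as the series of $I_1$ with clipped factors inserted, and then bound the deficit with Lemma~\ref{lem:clipped}. Put $s=\sqrt{ab}$ and $x=su$, and reindex by $r\mapsto r+1$. The summand becomes
\[
\frac N{r+1}\binom{a-1}{r}\binom{b-1}{r}u^{2r+2}
=\frac{N}{r+1}\,\frac{a^rb^r}{(r!)^2}U_r(a)U_r(b)\,u^{2r+2},
\]
where we used $\binom{m-1}r=m^rU_r(m)/r!$ from Lemma~\ref{lem:clipped}(b). Since $a^rb^ru^{2r+2}=x^{2r+2}/s^2$, this equals
\[
\frac{N}{s^2}\,\frac{x^{2r+2}}{r!\,(r+1)!}\,U_r(a)U_r(b)
=\frac{Nu}{s}\,\frac{x^{2r+1}}{r!\,(r+1)!}\,U_rV_r,
\]
with $U_r=U_r(a)$ and $V_r=U_r(b)$. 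Dropping the factors $U_rV_r$ and summing over $r\ge0$ gives $\frac{Nu}{s}I_1(2x)=\widehat S^{\mathrm P}_{N,k}(u)$ by the series of $I_1$. So
\[
S^{\mathrm P}_{N,k}(u)=\frac{Nu}s\sum_{r\ge0}\frac{x^{2r+1}}{r!(r+1)!}\,U_rV_r .
\]
This holds for all $r$, because the clipped products vanish for $r\ge\min(a,b)$, which matches the upper limit $\min(a,b)$ of \eqref{eq:periodicratio}.

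For the deficit, we apply Lemma~\ref{lem:clipped}(a) to the $2r$ clipped factors of $U_rV_r$, exactly as in \eqref{eq:allbin-odd-deficit}. This gives $0\le1-U_rV_r\le r(r+1)/(2A)$ with $1/A=1/a+1/b=N/(ab)$. Since all terms are nonnegative, $S^{\mathrm P}_{N,k}\le\widehat S^{\mathrm P}_{N,k}$, which is the left inequality. For the right inequality, the second identity of Lemma~\ref{lem:clipped}(c) gives
\[
\widehat S^{\mathrm P}_{N,k}-S^{\mathrm P}_{N,k}\le\frac{Nu}{s}\cdot\frac{1}{2A}\sum_{r\ge0}\frac{r(r+1)x^{2r+1}}{r!(r+1)!}
=\frac{Nu}{s}\cdot\frac{x^2I_1(2x)}{2A}.
\]
Dividing by $\widehat S^{\mathrm P}_{N,k}=\frac{Nu}sI_1(2x)$ bounds the relative error by $x^2/(2A)=ab\,u^2\cdot N/(2ab)=Nu^2/2$. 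This is \eqref{eq:periodicbound}.

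No step here is a real obstacle. The one thing to get right is the reindexing and the normalization: the factor $N/r$ in \eqref{eq:periodicratio} must become $N/(r+1)$, and the powers of $s$ must combine into the prefactor $Nu/s$. Once that is checked, the argument is the odd-run half of the proof of Theorem~\ref{thm:allbin-bessel}. The even-run terms, and with them the $x/s$ part of that bound, do not appear because a cyclic word always has an even number of switches. That is why the periodic bound has only the term $Nu^2/2$.
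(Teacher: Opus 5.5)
Your proof is correct and is the paper's argument: you shift the index in \eqref{eq:periodicratio} to get $(Nu/s)\sum_r U_rV_r\,x^{2r+1}/(r!(r+1)!)$, apply \eqref{eq:allbin-odd-deficit} with $A=ab/N$, and sum with the second identity of Lemma~\ref{lem:clipped}(c) to obtain $x^2/(2A)=Nu^2/2$. You also check the normalization and that $U_rV_r$ vanishes for $r\ge\min(a,b)$, which the paper's shorter proof leaves implicit.
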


So for periodic ends the ratio of every bin to an endpoint is close to
a single Bessel function $I_1$, with the same kind of error as in Theorem~\ref{thm:allbin-bessel}.

\begin{proof}
Put $x=su$, $A=ab/N$, $U_r=U_r(a)$ and $V_r=U_r(b)$. Shifting the index in
\eqref{eq:periodicratio} gives
$S^{\mathrm P}_{N,k}(u)=(Nu/s)\sum_{r\ge0}U_rV_rx^{2r+1}/(r!(r+1)!)$, so the
ratio to $\widehat S^{\mathrm P}_{N,k}$ is an average of the $U_rV_r$ with
weights $x^{2r+1}/(r!(r+1)!)$, whose sum is $I_1(2x)$. By
\eqref{eq:allbin-odd-deficit}, $0\le1-U_rV_r\le r(r+1)/(2A)$, and the second
identity of Lemma~\ref{lem:clipped}(c) bounds the deficit by
$x^2/(2A)=Nu^2/2$.
\end{proof}

A cyclic word switches an
even number of times, so only $I_1$ appears in the periodic approximation.
The free words with an odd number of switches start and end on
different letters, and they give the extra $I_0$ term of the free one.
As $I_0\approx I_1$ for large arguments, the free ratio is about twice the
periodic one near the crossing. Both grow like $e^{Nu}$, so the periodic
crossing needs $Nu$ about $\log2$ larger. Antal, Droz and
R\'acz~\cite[Eqs.~(37) and~(47)]{antal2004} work at a fixed value of $\tau=Nq$, their scaling variable
(Section~\ref{sec:setting}). There the endpoint atoms are
$1/(2\cosh \tau)$ (periodic) and $e^{-\tau}/2$ (free), which again differ by a
factor of about $2$ for large $\tau$, while their densities at zero magnetization
agree to leading order as $\tau\to\infty$.

\begin{theorem}[periodic crossing and boundary shift]\label{thm:periodic-crossing}
Let $u_N^{\mathrm P}$ be the positive root of $S^{\mathrm P}_{N,m_N}(u)=1$,
let $p_N^{\mathrm P}=1/(1+u_N^{\mathrm P})$, and let $u_N^{\mathrm F}=u_N$ be
the free central root. Put $c_{\mathrm P}=\tfrac12\log(\pi/2)$. As
$N\to\infty$,
\begin{align}
Nu_N^{\mathrm P}
 &=L-\tfrac12\log L+c_{\mathrm P}
 +\frac{\tfrac14\log L-\tfrac12c_{\mathrm P}+\tfrac38}{L}
 +O\!\left(\frac{(\log L)^2}{L^2}\right),\label{eq:periodicasym}\\
N(u_N^{\mathrm P}-u_N^{\mathrm F})
 &=\log2+\frac{\tfrac14-\tfrac12\log2}{L}
 +O\!\left(\frac{(\log L)^2}{L^2}\right).\label{eq:boundaryshift}
\end{align}
Replacing $Nu_N^{\mathrm P}$ by $N(1-p_N^{\mathrm P})$ preserves
\eqref{eq:periodicasym}. At the periodic crossing the coupling
$J_N^{\mathrm P}=-\frac12\log u_N^{\mathrm P}$ satisfies
\begin{equation}\label{eq:isingtemperature}
J_N^{\mathrm P}
=\tfrac12(\log N-\log\log N)
+\frac{\tfrac14\log\log N-\tfrac12c_{\mathrm P}}{\log N}
+O\!\left(\frac{(\log\log N)^2}{(\log N)^2}\right).
\end{equation}
\end{theorem}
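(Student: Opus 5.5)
We follow the route of Theorem~\ref{thm:allbin-expansion}: pass from the exact periodic root to a Bessel root with Theorem~\ref{thm:periodic-bessel}, expand the Bessel equation, and invert it with Lemma~\ref{lem:inversion}. Put $a=m_N$, $b=N-m_N$, $s=\sqrt{ab}$ and $A=ab/N$, so $s=N/2+O(1/N)$ and $A=N/4+O(1/N)$.

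\emph{Step 1: a Bessel root.} In the notation of the proof of Theorem~\ref{thm:periodic-bessel}, $\widehat S^{\mathrm P}_{N,m_N}(u)=I_1(2x)\,x/A$ with $x=su$. Define $\hat x^{\mathrm P}$ as the positive root of $xI_1(2x)=A$; this is $G_0(x)=A$ in \eqref{eq:allbin-bessel-root}. By \eqref{eq:allbin-logderivative} with $\eta=0$, $(\log G_0)'\ge2$, so the root is unique.

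Now argue as in Corollary~\ref{cor:allbin-root-bound}, using \eqref{eq:periodicbound} in place of \eqref{eq:allbin-bessel}. This gives
\[
0\le su_N^{\mathrm P}-\hat x^{\mathrm P}\le-\tfrac12\log\bigl(1-N(u_N^{\mathrm P})^2/2\bigr).
\]
A first crude localization $Nu_N^{\mathrm P}\le2\log N$ is needed to make this bound $O(L^2/N)$. We get it as in Step~1 of Theorem~\ref{thm:allbin-expansion}: evaluate $S^{\mathrm P}$ at $x=\log N$, where the Bessel form is huge and the relative error is small. Also $2(s-N/2)u=O(L/N^2)$, so $Nu_N^{\mathrm P}=2\hat x^{\mathrm P}+O(L^2/N)$.

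\emph{Step 2: expanding the Bessel equation.} Put $z=2\hat x^{\mathrm P}$, so that $zI_1(z)=2A=N/2+O(1/N)$. By the large-argument form, $I_1(z)=e^z(2\pi z)^{-1/2}(1-3/(8z)+O(z^{-2}))$. Taking logarithms,
\[
z+\tfrac12\log z=L-\log2+\tfrac12\log(2\pi)+\frac3{8z}+O(z^{-2}).
\]
Here $-\log2+\frac12\log(2\pi)=\frac12\log(\pi/2)=c_{\mathrm P}$. Lemma~\ref{lem:inversion} with $\ell=L$, $C=c_{\mathrm P}$, $B=\frac38$ and $\omega=O(z^{-2})+O(L^2/N)$ then gives \eqref{eq:periodicasym}; note that $z\sim L$, so $\omega=O(L^{-2})$. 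Replacing $u$ by $u/(1+u)$ changes $Nu$ by $Nu^2=O(L^2/N)$.

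\emph{Step 3: the shift and the coupling.} For \eqref{eq:boundaryshift}, subtract the free expansion. Corollary~\ref{cor:every-n}(c) gives $z_N$, and $Nu_N=z_N+O(L^2/N)$, which yields $\nu_N+(\frac14\log L-\frac12c_0+\frac18)/L$. The difference of the constants is $c_{\mathrm P}-c_0=\log2$. The difference of the $1/L$ terms is
\[
\Bigl(-\tfrac12\log2+\tfrac38-\tfrac18\Bigr)\Big/L=\Bigl(\tfrac14-\tfrac12\log2\Bigr)\Big/L .
\]

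For \eqref{eq:isingtemperature}, write $J=-\frac12\log u=\frac12\log N-\frac12\log(Nu)$. Substitute $Nu=L(1+\epsilon)$ with
\[
\epsilon=\frac{-\tfrac12\log L+c_{\mathrm P}}{L}+O\Bigl(\frac{\log L}{L^2}\Bigr),
\]
and use $\log(1+\epsilon)=\epsilon+O(\epsilon^2)$, where $\epsilon^2=O((\log L)^2/L^2)$. This gives
\[
-\tfrac12\log(Nu)=-\tfrac12\log L+\frac{\tfrac14\log L-\tfrac12c_{\mathrm P}}{L}+O\Bigl(\frac{(\log L)^2}{L^2}\Bigr),
\]
which is the displayed formula with $L=\log N$.

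\emph{Main obstacle.} The main difficulty is bookkeeping rather than depth. One must check that every error term, namely the Bessel relative error $O(L^2/N)$, the $s$ versus $N/2$ discrepancy, and the remainder in the asymptotics of $I_1$, is absorbed into $O((\log L)^2/L^2)$. One must also confirm the crude localization in Step~1, which needs an explicit lower bound for $I_1(2x)$ at $x=\log N$.
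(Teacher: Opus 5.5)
Your proposal is correct and follows the paper's proof: localize the root, take logarithms using the large-argument form of $I_1$, invert with Lemma~\ref{lem:inversion} (with $C=c_{\mathrm P}$ and $B=\frac38$), and then subtract \eqref{eq:thirdorder} and take $-\frac12\log u$. The only difference is bookkeeping. You pass through the auxiliary root of $G_0(x)=A$ and the bound $(\log G_0)'\ge2$, as in Theorem~\ref{thm:allbin-expansion}, whereas the paper takes logarithms of $\widehat S^{\mathrm P}$ directly at the exact root, where \eqref{eq:periodicbound} gives $\log\widehat S^{\mathrm P}=O(L^2/N)$, and absorbs that error into $\omega$.
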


Since $c_{\mathrm P}=c_0+\log2$, the periodic expansion is the free one
\eqref{eq:thirdorder} with $c_0$ replaced by $c_0+\log2$, and with $\frac38$ in
place of $\frac18$ in the $1/L$ term.

\begin{proof}
\emph{Step 1. The first order.} At the middle $2s=N+O(1/N)$. At
$u=(1\pm\theta)L/N$ with fixed $\theta\in(0,1)$ the bound of
Theorem~\ref{thm:periodic-bessel} tends to $0$, and the large-argument form of
$I_1$ makes $\widehat S^{\mathrm P}$ tend to $\infty$ with the plus sign and
to $0$ with the minus sign. Since $S^{\mathrm P}_{N,m_N}$ increases in $u$, we
get $Nu_N^{\mathrm P}\sim L$.

\emph{Step 2. The crossing equation.} Write $z=Nu_N^{\mathrm P}$. At the root
$S^{\mathrm P}=1$, so $\log\widehat S^{\mathrm P}=O(L^2/N)$ by
\eqref{eq:periodicbound}. Replacing the argument $2su$ by $z$, and the factor
$Nu/s$ by $2z/N$, changes $\log\widehat S^{\mathrm P}$ by $O(L/N^2)$.
Here $(\log I_1)'=I_0/I_1-1/z=O(1)$ for large positive arguments
by~\cite[Eqs.~10.29.2, 10.40.1]{dlmf}. Also
$I_1(z)=e^z(2\pi z)^{-1/2}(1-3/(8z)+O(z^{-2}))$~\cite[Eq.~10.40.1]{dlmf}. So
\[
z+\tfrac12\log z=L+c_{\mathrm P}+\frac3{8z}+O(z^{-2}+L^2/N),
\]
and Lemma~\ref{lem:inversion} with $\ell=L$, $C=c_{\mathrm P}$ and
$B=\frac38$ gives \eqref{eq:periodicasym}. As $N(u-u/(1+u))=O(L^2/N)$ for
$Nu=O(L)$, it also holds for $N(1-p_N^{\mathrm P})$.

\emph{Step 3. The shift and the coupling.} By \eqref{eq:zN},
$Nu_N=z_N+O(L^2/N)$. Subtracting \eqref{eq:thirdorder}, where
$c_0=c_{\mathrm P}-\log2$ and the constant in the $1/L$ term is $\frac18$, gives
\eqref{eq:boundaryshift}. Finally
$J=-\frac12\log u=\frac12(L-\log z)$, and
$\log z=\log L-(\frac12\log L-c_{\mathrm P})/L+O((\log L)^2/L^2)$ gives
\eqref{eq:isingtemperature}.
\end{proof}

\subsection{The crossing window}\label{sec:window}

Before we return to the coupling~\eqref{eq:isingtemperature} in
Section~\ref{sec:coupling}, we describe the whole row near the crossing. The factor $e^{-2y^2}$ in
the next theorem has a simple heuristic. Moving $yN/\sqrt{\log N}$ away from
the middle multiplies the Bessel argument
by $2\sqrt{k(N-k)}/N\approx1-2y^2/\log N$. The argument is about $\log N$, so
it drops by about $2y^2$, and the ratio, which grows like the exponential of the argument,
drops by about the factor $e^{-2y^2}$.

\begin{theorem}[crossing window in the magnetization]\label{thm:critical-window}
Let $X\in\{\mathrm F,\mathrm P\}$ and let $u_N^X$ be the exact central
crossing in the odds under boundary $X$ (so $u_N^{\mathrm F}=u_N$). For fixed real $t,y$ set
\[
u=u_N^X+\frac tN,\qquad
k_N(y)=\operatorname{nint}\!\left(\frac N2+\frac{yN}{\sqrt{\log N}}\right),
\]
where either nearest-integer convention may be used. Then, as $N\to\infty$,
uniformly on compact sets of $(t,y)$,
\begin{equation}\label{eq:criticalprofile}
S^X_{N,k_N(y)}(u_N^X+t/N)\longrightarrow e^{t-2y^2}.
\end{equation}
If $M_N^X(t)$ is the number of interior bins with probability strictly above
the endpoint probability at the odds $u_N^X+t/N$, then
\begin{equation}\label{eq:criticalcount}
\frac{\sqrt{\log N}}N M_N^X(t)\longrightarrow\sqrt{2\max(t,0)}.
\end{equation}
\end{theorem}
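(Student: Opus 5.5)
We plan to reduce \eqref{eq:criticalprofile} to the Bessel forms, and then read off \eqref{eq:criticalcount} from the profile together with the monotonicity of the bin ratios.

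\emph{Step 1: passing to the Bessel forms.} For fixed $(t,y)$ in a compact set, put $u=u_N^X+t/N$. By Theorem~\ref{thm:bessel} and Theorem~\ref{thm:periodic-crossing}, $Nu=O(L)$. Theorem~\ref{thm:allbin-bessel} for $X=\mathrm F$ and Theorem~\ref{thm:periodic-bessel} for $X=\mathrm P$ then show that $S^X_{N,k}(u)$ equals its Bessel form up to a factor $1-O(L^2/N)$. This holds uniformly in $k$, so it suffices to prove \eqref{eq:criticalprofile} for $\widehat S_{a,b}(u)=2u\mathcal J_c(su)$, respectively $\widehat S^{\mathrm P}_{N,k}(u)$.

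\emph{Step 2: ratio to the central bin.} We compare with the middle bin $k=m_N$ at $u_N^X$, where the exact ratio is $1$. By Step~1, the Bessel form there is $1+O(L^2/N)$. For $k=k_N(y)$ we have $a,b=N/2\pm yN/\sqrt L+O(1)$. Hence
\[
2s=2\sqrt{ab}=N\bigl(1-4y^2/L\bigr)^{1/2}+O(1)=N\bigl(1-2y^2/L\bigr)+O(N/L^2),
\]
and $c=N/(2s)=1+O(1/L)$. The Bessel argument is therefore
\[
2su=(Nu_N^X+t)\bigl(1-2y^2/L+O(L^{-2})\bigr)=Nu_N^X+t-2y^2+O\bigl(\log L/L\bigr),
\]
because $Nu_N^X=L+O(\log L)$. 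In both boundary cases the Bessel form is a prefactor $u$ (times $N/s$ in the periodic case, or times a bounded $c$) multiplied by $I_0+cI_1$, respectively $I_1$, evaluated at an argument tending to $\infty$. We use the large-argument asymptotics $I_\nu(z)=e^z(2\pi z)^{-1/2}(1+O(1/z))$. The prefactor ratio is $1+O(1/L)$, since $u/u_N^X=1+O(1/L)$ and $s$ changes by a factor $1+O(1/L)$. The factor $z^{-1/2}$ changes by $1+O(1/L)$, and $c$ changes the sum $I_0+cI_1$ by $1+O(1/L)$. The exponential changes by $e^{t-2y^2+o(1)}$. Taking the quotient of the two Bessel forms gives \eqref{eq:criticalprofile}, and every error term is uniform on compact sets. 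The rounding in $\operatorname{nint}$ only changes $s$ by $O(1)$, which is harmless.

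\emph{Step 3: the count.} By Proposition~\ref{prop:allbin-order} and its periodic analogue, the ratio is strictly increasing toward the middle. So the bins above the endpoint height form a symmetric interval around $N/2$, and $M_N^X(t)$ is $2\#\{k\ge N/2: S^X_{N,k}>1\}$ up to $O(1)$. If $t\le0$, a compact-set version of Step~2 with $y\ge\epsilon$ gives a limit $e^{t-2\epsilon^2}<1$, so the count is $o(N/\sqrt L)$. If $t>0$, choose $y_\pm=\sqrt{t/2}\pm\epsilon$. The profile gives $S>1$ at $y_-$ and $S<1$ at $y_+$, and monotonicity traps the half-width between $y_\pm N/\sqrt L$. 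Letting $\epsilon\to0$ yields $2\sqrt{t/2}=\sqrt{2t}$.

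We expect the main obstacle to be Step~2: keeping every error uniform while the Bessel argument grows like $L$. In particular, the shift $2y^2$ arises from a first-order expansion of $s$ multiplied by $Nu\sim L$, so we must verify that the second-order term $O(Nu\cdot y^4/L^2)=O(1/L)$ and the $O(\log L)$ correction to $Nu_N^X$ each contribute only $o(1)$.
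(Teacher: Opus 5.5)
Your proposal is correct and follows the paper's proof essentially step by step: you use the uniform $O(L^2/N)$ relative error to the Bessel forms, expand the argument $2su$ to $Nu_N^X+t-2y^2+o(1)$, apply the large-argument asymptotics normalized by the exact central ratio $1$, and then use monotonicity toward the middle to trap the count between $y=\sqrt{t/2}\pm\epsilon$. The only difference is for $t\le0$, where the paper notes that $u\le u_N^X$ makes the central ratio at most $1$, so $M_N^X(t)=0$ exactly; this is quicker than your $\epsilon$-argument, though yours also works.
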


So the crossing window has width about $1/N$ in $u$ and about
$N/\sqrt{\log N}$ bins in the magnetization, and inside it the ratio of each bin to an endpoint is a
Gaussian bump times $e^t$. The fixed-$\tau$ limit laws of~\cite{antal2004,herrmann2010} do not
alone provide the uniform local control needed here, since here $\tau$ grows like $\log N$. The
Gaussian itself is expected. At $m=2y/\sqrt{\log N}$ the factor $e^{-2y^2}$ is
$e^{-m^2\log N/2}$, the large-$\tau$ Gaussian $e^{-m^2\tau/2}$ of Antal, Droz
and R\'acz~\cite[Section~4.5, Eq.~(51)]{antal2004} at $\tau\approx\log N$.
Expanding the form of the periodic scaling function of Dantchev, Tonchev and
Rudnick for large $x_t=2Nu$~\cite[Eq.~(4.14)]{dantchev2023} to first order in
$m^2$ gives the same factor.
Theorem~\ref{thm:critical-window} adds the comparison with the endpoint,
centered at the exact crossing and uniform on compact sets.

\begin{proof}
\emph{Step 1. The profile.} Put $\lambda=k/N-\frac12$ and
$\eta=2\sqrt{k(N-k)}/N=\sqrt{1-4\lambda^2}$. For the stated rounding and
bounded $y$,
\[
\eta=1-\frac{2y^2}{L}+O\Bigl(L^{-2}+\frac1{N\sqrt L}\Bigr),
\qquad Nu_N^X=L+O(\log L).
\]
So the Bessel argument $Nu\eta$ at $(u,k)$ is $Nu_N^X+t-2y^2+o(1)$, uniformly
on the compact set. By Theorems~\ref{thm:allbin-bessel}
and~\ref{thm:periodic-bessel} both ratios are within relative error
$O(L^2/N)$ of their Bessel forms, and $I_0(z)$ and $I_1(z)$ equal
$e^z(2\pi z)^{-1/2}(1+O(1/z))$~\cite[Eq.~10.40.1]{dlmf}, so the prefactors
relative to their central values tend to $1$ uniformly. The change of argument
gives $e^{t-2y^2}$, and the exact central ratio is $1$, which proves
\eqref{eq:criticalprofile}.

\emph{Step 2. The count.} The interior ratios increase toward the middle, by
Proposition~\ref{prop:allbin-order} for $X=\mathrm F$ and by the coefficient
comparison after \eqref{eq:periodicratio} for $X=\mathrm P$. If
$t\le0$, the central ratio is at most $1$, so no interior bin is above the
endpoint. Let $t>0$ and $0<\theta<\sqrt{t/2}$. By Step~1, for large $N$ the
bins at scaled distance less than $\sqrt{t/2}-\theta$ are above the endpoint
and those at distance $\sqrt{t/2}+\theta$ are below, and by monotonicity so
are all farther bins. Count the bins in between, divide by $N/\sqrt L$ and let
$\theta\downarrow0$. Rounding changes the count by $O(1)$.
\end{proof}

\begin{remark}[constrained free energies]\label{rem:free-energy}
Let $Z^X_N(\mu)$ be the sum of the Ising weights of the configurations with
magnetization $\mu$ under boundary $X$, and $\mathcal F^X_N(\mu)=-\log Z^X_N(\mu)$ its
dimensionless free energy, as in the fixed-magnetization ensemble
of~\cite{dantchev2022,dantchev2024}. The normalization cancels in the
ratio of a bin to an endpoint, so $-\log S^X_{N,k}=\mathcal F^X_N(2k-N)-\mathcal F^X_N(N)$. On
a compact set of $(t,y)$ the limit $e^{t-2y^2}$ in
\eqref{eq:criticalprofile} is bounded below by a positive constant, so the
convergence there also holds for logarithms, uniformly. Hence
\[
\mathcal F^X_N(2k_N(y)-N)-\mathcal F^X_N(N)\longrightarrow2y^2-t
\]
at the odds $u_N^X+t/N$. Thus, near the crossing the free energy of a
fixed magnetization, measured from full magnetization, is a parabola in the
scaled magnetization, lowered by $t$. This compares single bins. About $N/\sqrt{\log N}$ bins of the window are
each comparable with an endpoint, and the probabilities sum to one, so each
of them has probability at most of order $\sqrt{\log N}/N$ there. The limit
therefore says nothing about the total weight of the nearly ordered
configurations.
\end{remark}

Figure~\ref{fig:window} shows the exact ratios and counts for $N=10^4$,
$10^6$ and $10^8$.

\begin{figure}[t]
\centering
\includegraphics{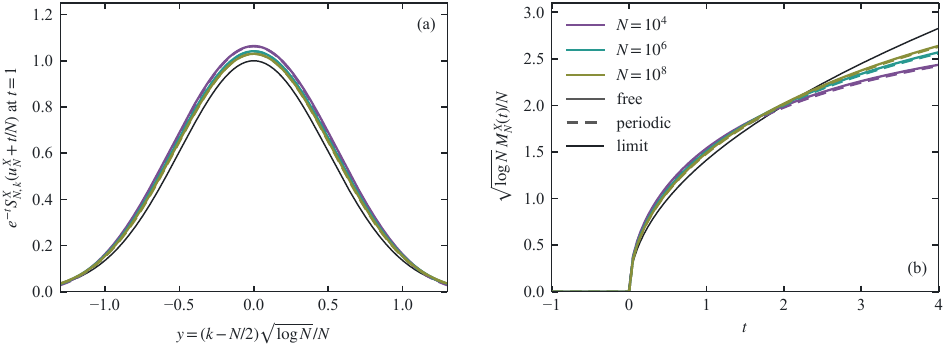}
\caption{(a) The profile $e^{-t}S^X_{N,k}(u^X_N+t/N)$ at $t=1$ against
$y=(k-N/2)\sqrt{\log N}/N$, for free (solid) and periodic (dashed)
ends and $N=10^4$, $10^6$ and $10^8$, with the limit $e^{-2y^2}$ of
Theorem~\ref{thm:critical-window} (thin black line). (b) The scaled number
$\sqrt{\log N}\,M^X_N(t)/N$ of interior bins above the endpoint height, against
$t$, with the limit $\sqrt{2\max(t,0)}$. All curves come from the exact run
polynomials at the exact central crossings. The convergence is slow. At $t=2$
and $y=0$ the relative error of the profile is $0.12$ at $N=10^4$, $0.08$ at
$N=10^6$ and $0.06$ at $N=10^8$, under either boundary.}
\label{fig:window}
\end{figure}

\subsection{The coupling at the crossing}\label{sec:coupling}

We now turn to the coupling at the two central crossings, since this is how
Stepanyan et~al.\@~\cite{stepanyan2024} state the crossing. Recall the free coupling $J_N^{\mathrm F}=-\frac12\log u_N$ and the correlation length $\xi(J)=-1/\log\tanh J$ of
the infinite chain (Section~\ref{sec:setting}), so that $\langle\sigma_0\sigma_r\rangle=e^{-r/\xi(J)}$
there, evaluated at the coupling of the crossing. The spin
correlations in a chain of fixed magnetization behave differently, and can
even become negative~\cite[Sections~II and~IV]{balog2025}.

\begin{corollary}[couplings and correlation lengths at the crossings]\label{cor:couplings}
As $N\to\infty$,
\begin{enumerate}[(a)]
\item The free coupling satisfies $J_N^{\mathrm F}=\frac12(\log N-\log\log N)
+\dfrac{\frac14\log\log N-\frac12c_0}{\log N}
+O\Bigl(\dfrac{(\log\log N)^2}{(\log N)^2}\Bigr)$.
\item The two couplings differ by $J_N^{\mathrm F}-J_N^{\mathrm P}=\dfrac{\log2}{2\log N}
+O\Bigl(\dfrac{\log\log N}{(\log N)^2}\Bigr)$.
\item The correlation lengths satisfy $N/\xi(J_N^{\mathrm F})=2\log N-\log\log N+\log(\pi/8)
+O(\log\log N/\log N)$, and
$N/\xi(J_N^{\mathrm P})=2\log N-\log\log N+\log(\pi/2)
+O(\log\log N/\log N)$.
\end{enumerate}
\end{corollary}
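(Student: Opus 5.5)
Everything reduces to expansions already proved for $z_N$ (free) and $Nu_N^{\mathrm P}$ (periodic), so the plan is to convert them into statements about $J$ and $\xi$. Throughout write $L=\log N$ and $\ell_2=\log L=\log\log N$.

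\emph{Part (a).} Start from $J_N^{\mathrm F}=-\frac12\log u_N=\frac12(L-\log(Nu_N))$. By \eqref{eq:zN}, $Nu_N=z_N(1+u_N)=z_N+O(L^2/N)$, so $\log(Nu_N)=\log z_N+O(L/N)$. Insert \eqref{eq:thirdorder}, written as $z_N=L\bigl(1-(\frac12\ell_2-c_0)/L+O(\ell_2/L^2)\bigr)$. Expanding $\log(1+\epsilon)$ with $\epsilon=O(\ell_2/L)$ gives
\[
\log z_N=\ell_2-\frac{\frac12\ell_2-c_0}{L}+O\Bigl(\frac{\ell_2^2}{L^2}\Bigr).
\]
Halving gives (a). This is the same computation as in Step~3 of Theorem~\ref{thm:periodic-crossing}, with $c_{\mathrm P}$ replaced by $c_0$. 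The $1/L$ constant $\frac18$ of \eqref{eq:thirdorder} only enters at order $1/L^2$.

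\emph{Part (b).} I will not subtract (a) from \eqref{eq:isingtemperature}, because their error terms $O(\ell_2^2/L^2)$ are too large. Instead write
\[
J_N^{\mathrm F}-J_N^{\mathrm P}=\tfrac12\log\frac{Nu_N^{\mathrm P}}{Nu_N}=\tfrac12\log\Bigl(1+\frac{N(u_N^{\mathrm P}-u_N)}{Nu_N}\Bigr).
\]
By \eqref{eq:boundaryshift} the numerator is $\log2+O(1/L)$. The denominator is $Nu_N=L-\frac12\ell_2+O(1)$, so the quotient is $\frac{\log2}{L}+O(\ell_2/L^2)$. Since $\log(1+\epsilon)=\epsilon+O(\epsilon^2)$, this gives (b) with the stated error $O(\ell_2/L^2)$.

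\emph{Part (c).} Using $\tanh J=(1-u)/(1+u)$, Section~\ref{sec:setting} gives $N/\xi=N\log\frac{1+u}{1-u}=2Nu(1+O(u^2))$. At either crossing $u=O(L/N)$, so $N/\xi=2Nu+O(L^3/N^2)$. For free ends, $2Nu_N=2z_N+O(L^2/N)$, and \eqref{eq:secondorder} gives $2z_N=2L-\ell_2+2c_0+O(\ell_2/L)$ with $2c_0=\log(\pi/8)$. For periodic ends, \eqref{eq:periodicasym} gives the same with $2c_{\mathrm P}=\log(\pi/2)$. In both cases the polynomial error terms are absorbed by $O(\ell_2/L)$.

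The main risk is bookkeeping rather than any real obstacle. It lies in (b): one must use the shift \eqref{eq:boundaryshift} directly, and not the difference of the two coupling expansions, to reach the sharper error $O(\ell_2/L^2)$.
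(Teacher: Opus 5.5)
Your proposal is correct and follows the paper's proof essentially step for step. In (a) you use $J_N^{\mathrm F}=\frac12(L-\log z_N)+O(L/N)$ and take the logarithm of the expansion of $z_N$; in (b) you write the difference as $\frac12\log(1+\delta)$ and use the boundary shift \eqref{eq:boundaryshift} directly rather than subtracting the two coupling expansions; and in (c) you combine $N/\xi=2Nu+O(Nu^3)$ with \eqref{eq:secondorder} and \eqref{eq:periodicasym}.
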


All three parts are facts about $Nu$ at the crossings, read on other scales.
The coupling is $-\frac12\log u$, so it sees $Nu$ only through $\log(Nu)$. The
shift of $\log2$ in $Nu$ between the two boundary conditions therefore becomes
a shift of order $1/\log N$ in the coupling. The correlation length is
$1/(2u)$ to leading order, so $N/\xi$ is about $2Nu$.

\begin{proof}
(a) By \eqref{eq:zN}, $u_N=z_N/(N-z_N)$, so
$J_N^{\mathrm F}=\frac12(L-\log z_N)+O(L/N)$. Taking logarithms in
\eqref{eq:thirdorder} gives
$\log z_N=\log L-(\frac12\log L-c_0)/L+O((\log L)^2/L^2)$, and (a) follows in
the same way as \eqref{eq:isingtemperature} in Step~3 of the proof of
Theorem~\ref{thm:periodic-crossing}.

(b) The difference is $J_N^{\mathrm F}-J_N^{\mathrm P}
=\frac12\log(u_N^{\mathrm P}/u_N^{\mathrm F})=\frac12\log(1+\delta)$, where
$\delta=N(u_N^{\mathrm P}-u_N^{\mathrm F})/(Nu_N^{\mathrm F})$. By
\eqref{eq:boundaryshift} the numerator is $\log2+O(1/L)$. By \eqref{eq:zN} and
Corollary~\ref{cor:every-n}(b) the denominator is $z_N+O(L^2/N)=L+O(\log L)$.
So $\delta=(\log2)/L+O(\log L/L^2)$, and
$\frac12\log(1+\delta)=\delta/2+O(\delta^2)$.

(c) With $u=e^{-2J}$ we have $\tanh J=(1-u)/(1+u)$, so
$N/\xi(J)=N\log\frac{1+u}{1-u}=2Nu+O(Nu^3)$ for $0<u\le\frac12$. At either
crossing $Nu^3=O(L^3/N^2)$. It remains to insert
$Nu_N=z_N+O(L^2/N)=L-\frac12\log L+c_0+O(\log L/L)$, from
Corollary~\ref{cor:every-n}(b), and the expansion \eqref{eq:periodicasym} of
$Nu_N^{\mathrm P}$, and to recall that $2c_0=\log(\pi/8)$ and
$2c_{\mathrm P}=\log(\pi/2)$.
\end{proof}

So at either crossing the correlation length is about $N/(2\log N)$. The chain
is ordered over long stretches, but each stretch is a vanishing fraction of the
chain. In the fixed-$\tau$ regime of~\cite{antal2004}, $N/\xi\approx2\tau$
stays bounded, while here it grows like $2\log N$. By part~(b), the boundary
condition moves the coupling at the crossing by an amount that decays only
like $1/\log N$.

Stepanyan et~al.\@~\cite[Section~III.B]{stepanyan2024} call the periodic
central crossing the second thermal transition of the finite chain. They
give the approximate formula $J\approx(2\log N-1)/5$~\cite[Eq.~(16)]{stepanyan2024},
and their figure~1(b) shows the fitted line $J\approx0.3864\log N-0.21055$.
Table~\ref{tab:couplings} and Figure~\ref{fig:coupling} compare both with the
exact crossing and with \eqref{eq:isingtemperature} without its error term. The line is accurate for
$10\le N\le100$, but neither approximation has a $\log\log N$ term. The main
terms of \eqref{eq:isingtemperature} have slope $\frac12-1/(2\log N)$ in
$\log N$, up to $O(\log\log N/(\log N)^2)$, so the exact curve bends slowly
toward slope $\frac12$. The secant slope of the exact periodic coupling is
$0.385$ on $10\le N\le10^2$, close
to the fitted $0.3864$, then $0.417$ on $10^2\le N\le10^3$, and $0.450$ on
$10^4\le N\le10^5$. At the tabulated sizes $N=10^3,10^4,10^5,10^6$, the main terms of
\eqref{eq:isingtemperature} are closer to the exact value than either
approximation, and at $N=10^5$ they are off by $0.002$, against $0.171$ and
$0.338$.

\begin{table}[t]
\centering
\small
\begin{tabular}{rcccccc}
\toprule
$N$ & $J_N^{\mathrm P}$ & \eqref{eq:isingtemperature} & \cite[(16)]{stepanyan2024} &
\cite[figure~1(b)]{stepanyan2024} & $J_N^{\mathrm F}$ & $J_N^{\mathrm F}-J_N^{\mathrm P}$\\
\midrule
$10$ & $0.687$ & $0.776$ & $0.721$ & $0.679$ & $0.823$ & $0.136$\\
$10^2$ & $1.574$ & $1.597$ & $1.642$ & $1.569$ & $1.658$ & $0.084$\\
$10^3$ & $2.534$ & $2.541$ & $2.563$ & $2.459$ & $2.591$ & $0.057$\\
$10^4$ & $3.540$ & $3.543$ & $3.484$ & $3.348$ & $3.582$ & $0.042$\\
$10^5$ & $4.576$ & $4.578$ & $4.405$ & $4.238$ & $4.610$ & $0.033$\\
$10^6$ & $5.633$ & $5.634$ & $5.326$ & $5.128$ & $5.661$ & $0.028$\\
\bottomrule
\end{tabular}
\caption{The couplings $J_N^{\mathrm P}$ and $J_N^{\mathrm F}$ at the
periodic and free central crossings, from the exact run polynomials and again
from the transfer matrix (the two agree to $10^{-12}$), against
the main terms of \eqref{eq:isingtemperature} and the two approximations of
Stepanyan et~al.~\cite{stepanyan2024} (their Eq.~(16) and the line of their
figure~1(b)). By Corollary~\ref{cor:couplings}(b) the last column is
$(\log2)/(2\log N)$ to leading order, which is $0.030$ at $N=10^5$.
Differences are computed before rounding.}
\label{tab:couplings}
\end{table}

\begin{figure}[htb]
\centering
\includegraphics[width=\textwidth]{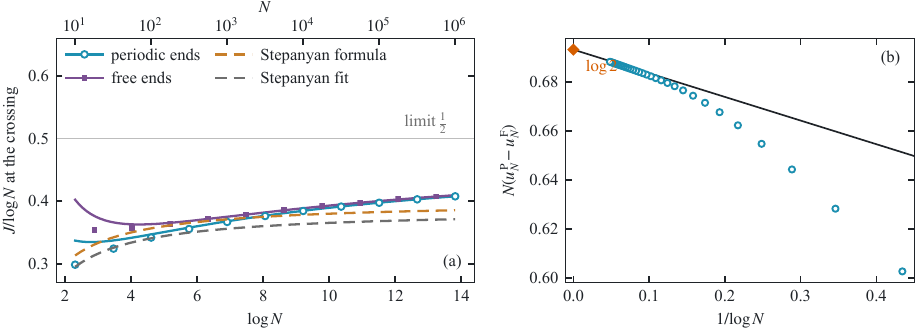}
\caption{(a) The coupling at the central crossing divided by $\log N$,
against $\log N$ for $10\le N\le10^6$, from the exact run polynomials for periodic (circles) and
free (squares) ends, with the main terms of \eqref{eq:isingtemperature} and
of Corollary~\ref{cor:couplings}(a) (solid lines of the same color) and the
two approximations of Stepanyan et~al.~\cite{stepanyan2024} (dashed). The
thin line is the limit $\frac12$. The two approximations have no
$\log\log N$ term and a slope in $\log N$ below $\frac12$, so they drift
away. (b) The boundary shift $N(u^{\mathrm P}_N-u^{\mathrm F}_N)$ for
$10\le N\le10^9$, from the exact run polynomials, against $1/\log N$, with
the first two terms $\log2+(\frac14-\frac12\log2)/\log N$ of
\eqref{eq:boundaryshift} (line). The shift tends to $\log2$ (diamond), and at
$N=10^9$ it differs from the two terms by $3\times10^{-4}$.}
\label{fig:coupling}
\end{figure}

\begin{remark}[other comparisons]\label{rem:convention}
We compare one central bin with one endpoint, as Stepanyan
et~al.\@~\cite{stepanyan2024} do. Comparing instead with both endpoints, or
more generally solving $S^X_{N,m_N}(u)=\Theta$ for a fixed $\Theta>0$, only adds $\log\Theta$
to the constant $c_X$. For periodic ends, Step~2 of the proof of
Theorem~\ref{thm:periodic-crossing} goes through with $\log\Theta$ added to the
right side of its crossing equation. For free ends the same argument works
with Theorem~\ref{thm:bessel}(i) and \eqref{eq:bracket-log} in place of
Theorem~\ref{thm:periodic-bessel}, and with $\frac18$ in place of $\frac38$.
So \eqref{eq:periodicasym} and \eqref{eq:thirdorder} hold for these roots with
$c_X+\log\Theta$ in place of $c_X$, and the coupling moves by $-(\log\Theta)/(2\log N)$
to leading order. For $\Theta=2$ the free constant becomes $c_0+\log2=c_{\mathrm P}$,
so this $\log2$, which comes from the convention, should not be confused
with the shift by $\log2$ between the boundary conditions in
Theorem~\ref{thm:periodic-crossing}. For odd $N$, adding the two central bins
as well as the two endpoints gives $\Theta=1$ again. The bracket of
Theorem~\ref{thm:every-n} is proved only for $\Theta=1$.
\end{remark}

\subsection{Domain walls and the energy gap}\label{sec:walls}

We now describe the crossing in terms of domain walls. Given that
the free chain is in bin $m_N$, a configuration with $j$ domain walls has probability proportional to
$u^{j}$, so the number $j$ of domain walls has the law $W(N,m_N,j)u^j/S_N(u)$. Let $\mathcal W_N(u)=uS_N'(u)/S_N(u)$ be its mean, and put
$\chi(w)=I_0(w)/(I_0(w)+I_1(w))$.

\begin{proposition}[domain walls at the crossing]\label{prop:walls}
Let $N\ge2$ and $u>0$, put $w=Nu$, and suppose that $w(w+2)<2N$. Let
$\Delta=-\log(1-w(w+2)/(2N))$. Then for every $\delta>0$,
\[
\bigl|\mathcal W_N(u)-w-\chi(w)\bigr|\le w(e^\delta-1)+\frac\Delta\delta .
\]
At the crossing, as $N\to\infty$,
$\mathcal W_N(u_N)=z_N+\frac12+\frac1{8z_N}+O\bigl((\log N)^{-2}\bigr)$.
\end{proposition}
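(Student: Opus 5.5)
The plan is to read $\mathcal W_N$ as a logarithmic derivative and to differentiate the Bessel sandwich of Theorem~\ref{thm:bessel}(i) by a convexity argument.

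\emph{Step 1: set-up.} Put $t=\log u$, $g(t)=\log S_N(e^t)$ and $f(t)=\log\bigl(2\Phi(Ne^t)/N\bigr)$, with $\Phi(w)=w(I_0(w)+I_1(w))$ as in Lemma~\ref{lem:bracket}. Then $\mathcal W_N(u)=g'(t)$. Since $S_N$ has nonnegative coefficients, $g$ is a log-sum-exp of linear functions of $t$, so it is convex.

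\emph{Step 2: the Bessel sandwich.} Theorem~\ref{thm:bessel}(i) with $x=w/2$ gives $g=f-\vartheta$ with $0\le\vartheta(t)\le\Delta(t)$. The bound $\Delta$ is increasing in $u$.

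\emph{Step 3: the slope of $f$.} Use $I_0'=I_1$ and $I_1'=I_0-I_1/w$. These give $\Phi'(w)=I_0+w(I_0+I_1)$, hence
\[
f'(t)=\frac{w\Phi'(w)}{\Phi(w)}=w+\chi(w).
\]

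\emph{Step 4: secant bounds.} Convexity of $g$ gives, for $\delta>0$,
\[
\frac{g(t)-g(t-\delta)}\delta\le g'(t)\le\frac{g(t+\delta)-g(t)}\delta .
\]
For the upper secant, $g(t+\delta)-g(t)\le f(t+\delta)-f(t)+\vartheta(t)$, so only $\Delta$ at the given $u$ is needed. For the lower secant, $g(t)-g(t-\delta)\ge f(t)-f(t-\delta)-\vartheta(t)$, again only at $u$.

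\emph{Step 5: integrating $f'$.} Write $f(t+\delta)-f(t)=\int_0^\delta(we^s+\chi(we^s))\,ds$. Use that $\chi$ is nonincreasing, which holds because $I_1/I_0$ is increasing on $(0,\infty)$ (standard; I would cite it or prove it from the Riccati equation for $I_1/I_0$). This gives
\[
f(t+\delta)-f(t)\le\delta\bigl(we^\delta+\chi(w)\bigr).
\]
Symmetrically,
\[
f(t)-f(t-\delta)\ge\delta\bigl(we^{-\delta}+\chi(w)\bigr).
\]
Since $w(1-e^{-\delta})\le w(e^\delta-1)$, dividing by $\delta$ yields
\[
\bigl|\mathcal W_N-w-\chi(w)\bigr|\le w(e^\delta-1)+\Delta/\delta .
\]
The monotonicity of $\chi$ is the only nonelementary input, and it is the step I would check most carefully. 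Without it I would instead bound $|\chi'|$ and absorb an extra $O(\delta)$ term, which would suffice for the asymptotics but not for the exact inequality.

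\emph{Step 6: the asymptotics at the crossing.} At $u=u_N$ we have $w=Nu_N=z_N(1+u_N)=z_N+O(L^2/N)$ and $w\sim L$, so $\Delta=O(L^2/N)$. Choose $\delta=\sqrt{\Delta/w}$. The error term is then $O(\sqrt{w\Delta})=O(L^{3/2}N^{-1/2})$, which is $o(L^{-2})$. The large-argument forms give
\[
\frac{I_1}{I_0}=1-\frac1{2w}+O(w^{-2}),
\]
so
\[
\chi(w)=\frac1{2-\frac1{2w}+O(w^{-2})}=\frac12+\frac1{8w}+O(w^{-2}).
\]
Replacing $w$ by $z_N$ costs $O(L^2/N)$, and $w^{-2}=O(L^{-2})$. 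Together these give $\mathcal W_N(u_N)=z_N+\frac12+\frac1{8z_N}+O(L^{-2})$.
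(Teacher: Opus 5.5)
Your proposal is correct and follows essentially the same route as the paper. Both arguments use the convexity of $\log S_N(e^t)$, secant bounds against the sandwich of Theorem~\ref{thm:bessel}(i) (with $\Delta$ needed only at the given $u$), the slope $w+\chi(w)$ of the Bessel side, the monotonicity of $\chi$, and $\delta$ of order $\sqrt{L/N}$ at the crossing. The only differences are minor: you integrate $f'$ pointwise instead of using convexity of the Bessel side, and you would prove that $\chi$ decreases from the Riccati equation instead of from the variance of $\cos t$.
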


So the central bin at the crossing carries about half a domain wall more
than the switching scale $z_N$. Numerically, $\mathcal W_N(u_N)-z_N$ is $0.5191$,
$0.5166$ and $0.5132$ at $N=10^3,10^4,10^5$, against $0.5224$, $0.5162$ and
$0.5126$ for $\frac12+\frac1{8z_N}$.

\begin{proof}
Put $\vartheta=\log u$, $x=hu=w/2$ with $h=N/2$, $g(\vartheta)=\log S_N(e^\vartheta)$ and
$G(\vartheta)=\log F(he^\vartheta)$. Both are logarithms of power series in $e^\vartheta$ with
nonnegative coefficients, so both are convex in $\vartheta$, and $g'(\vartheta)=\mathcal W_N(u)$.
By Theorem~\ref{thm:bessel}(i), $G(\vartheta)-\log h-\Delta\le g(\vartheta)\le G(\vartheta)-\log h$,
and the upper bound holds at every $\vartheta$. From $F'=2F+2I_0(2x)$ we get
$G'(\vartheta)=xF'(x)/F(x)=w+\chi(w)$. The function $\chi$ decreases, since
$I_1/I_0$ is the mean of $\cos t$ under the density proportional to
$e^{w\cos t}$ on $[0,\pi]$~\cite[Eq.~10.32.3]{dlmf}, and the derivative of $I_1/I_0$ in $w$
is the variance of $\cos t$, which is positive. By convexity,
\[
\mathcal W_N(u)\le\frac{g(\vartheta+\delta)-g(\vartheta)}\delta
\le\frac{G(\vartheta+\delta)-G(\vartheta)+\Delta}\delta\le G'(\vartheta+\delta)+\frac\Delta\delta
\le we^\delta+\chi(w)+\frac\Delta\delta,
\]
and in the same way $\mathcal W_N(u)\ge we^{-\delta}+\chi(w)-\Delta/\delta$.
Since $1-e^{-\delta}\le e^\delta-1$, this proves the bound.

At the crossing $w=Nu_N=z_N+O(L^2/N)$ and $\Delta=O(L^2/N)$, and
$\delta=(L/N)^{1/2}$ makes the bound $O(L^{3/2}N^{-1/2})$. The large-argument
forms of $I_0$ and $I_1$ give $\chi(w)=\frac12+\frac1{8w}+O(w^{-2})$, which
proves the second claim.
\end{proof}

Since $u=e^{-2J}$, the definition of $\mathcal W_N$ gives
\[
\frac d{dJ}\log\frac{P_N(m_N)}{P_N(0)}=-2\mathcal W_N(u).
\]
The energy
$-\sum_{i<N}\sigma_i\sigma_{i+1}$ of a configuration exceeds that of the all-up
configuration by twice its number of domain walls. So at the crossing the mean
energy gap between the central bin and the all-up configuration is
$2\mathcal W_N(u_N)=2\log N-\log\log N+O(1)$, by Proposition~\ref{prop:walls} and Corollary~\ref{cor:every-n}(b).
Over a range of couplings of width
$1/(2\mathcal W_N(u_N))$, which is $0.061$ at $N=10^4$ and $0.048$ at $N=10^5$,
the ratio of the central bin to an endpoint changes by about a factor $e$. The
same proof with Theorem~\ref{thm:periodic-bessel} in place of
Theorem~\ref{thm:bessel}, where $G$ becomes $\log(xI_1(2x))$ and
$G'=wI_0(w)/I_1(w)$, gives the mean number $Nu_N^{\mathrm P}+\frac12+O(1/\log N)$ of domain walls
in the central bin at the periodic crossing, for even $N$.

The count of about $\log N$ walls also accounts for a second fit of
Stepanyan et~al. They measure the mean
domain length $l_d=\langle N/(n_d+\delta_{0,n_d})\rangle$, with $n_d$ the
number of domain walls of the periodic chain, at their second transition and
fit $l_d\propto N^{0.757}$ on $20\le N\le100$~\cite[Eq.~(18)]{stepanyan2024}.
At the coupling $J_N^{\mathrm P}$ of the exact periodic crossing, where in the
whole chain $n_d=2r$ has weight proportional to
$\binom N{2r}u^{2r}$, we get $l_d=9.27$ at $N=20$ and $l_d=31.21$ at $N=100$,
and a least-squares exponent $0.755$ over the even $N$ in that range. With about $\log N$ walls
one expects $l_d$ of order $N/\log N$, whose least-squares exponent on that
range is $0.74$ and tends to $1$ only slowly. So the fitted power is close to a local slope of $N/\log N$, just as the slope
$0.3864$ of their fitted coupling is close to the secant slope $0.385$ of the
exact coupling on $10\le N\le10^2$.

\begin{remark}[full order against one bin]\label{rem:ordered}
The chain is fully ordered, all up or all down, with probability
$P_N(0)+P_N(N)=p^{N-1}$. This is $\frac12$ at $u_{\mathrm{ord}}=2^{1/(N-1)}-1$, that
is, at the coupling
$J_{\mathrm{ord}}=\frac12\log\frac{N-1}{\log2}+O(1/N)$. By
\eqref{eq:zN}, $J_N^{\mathrm F}=\frac12\log(N/z_N-1)$, so
\[
J_{\mathrm{ord}}-J_N^{\mathrm F}=\tfrac12\log\frac{z_N}{\log2}+O\Bigl(\frac{\log N}N\Bigr)
=\tfrac12\log\log N-\tfrac12\log\log2+o(1).
\]
The difference is $1.046$ at $N=10^3$ and $1.330$ at $N=10^5$. So full order
wins against the rest of the chain only at a coupling about $\frac12\log\log N$
above the central crossing. The $\log\log N$ in $J_N^{\mathrm F}$ measures how
much less likely one central bin is than the whole disordered band.
\end{remark}

\subsection{A magnetic field}\label{sec:field}

A dimensionless field $\mathcal B=\beta H>0$ ($\beta h$ in the notation
of~\cite{stepanyan2024}) multiplies the weight of a
configuration by $e^{\mathcal B(\sigma_1+\dots+\sigma_N)}$. Magnetization
$N-2a$, with $a$ spins down, then has weight $e^{-2\mathcal Ba}$ relative to
full magnetization, beyond the zero-field ratio. By the symmetry of the row, its
ratio to the all-up configuration is
\[
R^X_{N,a}(u)=S^X_{N,a}(u)\,e^{-2\mathcal Ba}\qquad(1\le a\le N-1),
\]
for either boundary $X\in\{\mathrm F,\mathrm P\}$. Garc\'{\i}a-Pelayo~\cite[Eq.~(17)]{garciapelayo2009}
gives this reweighting for both boundary conditions. We keep $\mathcal B$ fixed
as $N$ grows and as the coupling varies (this is different from varying the
temperature at fixed physical field $H$). The ratio increases strictly from $0$ to $\infty$ in $u$, so
it equals $1$ at a single $u=u^{(a)}_N$, and the all-up configuration is more
likely than magnetization $N-2a$ exactly when $u<u^{(a)}_N$. The roots depend
on $X$ and $\mathcal B$, which we suppress in the notation.

There are two comparisons behind the transitions of Stepanyan
et~al.\@~\cite[Section~III.C]{stepanyan2024}. The one-spin crossing is
$u^{(1)}_N$. For $N\ge3$, define
\[
u^{(\ge2)}_N=\min_{2\le a\le N-1}u^{(a)}_N.
\]
At this value the all-up configuration ties with the most likely bin having
at least two down spins, excluding the all-down configuration (whose ratio
to all up is always $e^{-2\mathcal BN}<1$). The all-up configuration is
strictly more likely than every other magnetization bin exactly when
$u<\min\{u^{(1)}_N,u^{(\ge2)}_N\}$. Even when there is only one transition in the sense of that paper, these two
crossing values can differ.

Both comparisons come from the edge of the row. The right variable is
$y=Nu^2$. Then $bu^2=(1-a/N)y$ for the variable $bu^2$ of
Section~\ref{sec:allbin-edge}, and this factor tends to one for fixed $a$. With $H_a$ of
\eqref{eq:edge-polynomial}, let $y_a(\mathcal B)$ be the positive root of $H_a(y)=e^{2\mathcal Ba}$. So
$y_1(\mathcal B)=e^{2\mathcal B}$, and at $\mathcal B=0$ these are the roots
$y_a$ of Section~\ref{sec:allbin-edge}.

The weights $e^{-2\mathcal Ba}H_a(y)$ are classical. Put $\lambda=e^{-2\mathcal B}$
and $\nu=y\lambda/(1-\lambda)$. Then $e^{-\nu}$ and $e^{-\nu}\lambda^aH_a(y)$ for
$a\ge1$ are the probabilities of the P\'olya--Aeppli law, with Anscombe's $\tau$
equal to $\lambda$ and his $m_1$ equal to $\nu$~\cite[Eqs.~(2.11) to~(2.13),
p.~362]{anscombe1950}. Anscombe remarks that this law has two modes when
$2<\nu<(1-\lambda)^{-1}$~\cite[p.~364]{anscombe1950}, and this range is
nonempty exactly when $\mathcal B<\frac12\log2$. His remark is about local
modes of the limiting law. Theorem~\ref{thm:field} is about the finite
chain, and it controls the crossings and the minimum over all competing
bins for both boundary conditions.

\Needspace{24\baselineskip}
\begin{theorem}[edge crossings in a fixed field]\label{thm:field}
Fix a dimensionless field $\mathcal B>0$ and either free or periodic
ends. Let $y_a(\mathcal B)>0$ solve $H_a(y)=e^{2\mathcal Ba}$, with $H_a$
as in \eqref{eq:edge-polynomial}.
\begin{enumerate}[(a)]
\item For every fixed integer $a\ge1$,
\[
N(u^{(a)}_N)^2\longrightarrow y_a(\mathcal B)
\qquad(N\to\infty).
\]
In particular, $N(u^{(1)}_N)^2\to e^{2\mathcal B}$.
\item As $a\to\infty$, $y_a(\mathcal B)\to\infty$, so the minimum
\[
y_*(\mathcal B)=\min_{a\ge2}y_a(\mathcal B)
\]
is attained, and
\[
N(u^{(\ge2)}_N)^2\longrightarrow y_*(\mathcal B)
\qquad(N\to\infty).
\]
\item With $\mathcal B_c=\frac12\log2$,
\[
\begin{cases}
y_*(\mathcal B)<e^{2\mathcal B},&0<\mathcal B<\mathcal B_c,\\
y_*(\mathcal B)=e^{2\mathcal B}=2,&\mathcal B=\mathcal B_c,\\
y_*(\mathcal B)>e^{2\mathcal B},&\mathcal B>\mathcal B_c.
\end{cases}
\]
\end{enumerate}
\end{theorem}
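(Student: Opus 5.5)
The plan is to work in the edge variable $y=Nu^2$, prove locally uniform convergence of the finite ratios to the Laguerre weights $e^{-2\mathcal Ba}H_a(y)$, add a tail bound uniform in $a$ from the Bessel estimates, and finish (c) by a one-line recursion for $H_a$.

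\emph{Part (a).} Fix $a$ and substitute $u=\sqrt{y/N}$ with $y$ in a compact subset of $(0,\infty)$. For free ends I would use the expansion \eqref{eq:edge-exact-series} with $b=N-a$ and $bu^2=(1-a/N)y$. Its first sum carries a factor $2u=O(N^{-1/2})$, and its second carries $y/b=O(N^{-1})$. In the third, $V_{r+1}=U_{r+1}(b)\to1$. Hence $S^{\mathrm F}_{N,a}(u)\to H_a(y)$ uniformly on compacts.

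For periodic ends I would read \eqref{eq:periodicratio} directly. The term $\frac Nr\binom{a-1}{r-1}\binom{b-1}{r-1}u^{2r}$ equals $\binom{a-1}{r-1}\frac{y^r}{r!}\,U_{r-1}(b)\,(b/N)^{r-1}\,\frac{N}{b}\cdot\frac{b^{\,r-1}}{N^{r-1}}\cdot(\ldots)$, and it tends to $\binom{a-1}{r-1}y^r/r!$. Summing over $r$ gives $H_a(y)$ again.

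The limit functions $y\mapsto e^{-2\mathcal Ba}H_a(y)$ are strictly increasing and continuous. For each $\epsilon>0$, the finite ratio $R^X_{N,a}$ is therefore eventually below $1$ at $y_a(\mathcal B)-\epsilon$ and above $1$ at $y_a(\mathcal B)+\epsilon$. Since $R^X_{N,a}$ is monotone in $u$, this gives $N(u^{(a)}_N)^2\to y_a(\mathcal B)$. For $a=1$ we have $H_1(y)=y$, so $y_1(\mathcal B)=e^{2\mathcal B}$.

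\emph{Part (b).} This is the main obstacle, because the minimum defining $u^{(\ge2)}_N$ runs over all $2\le a\le N-1$ and so needs a bound uniform in $a$.

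First, $y_a(\mathcal B)\to\infty$. By \eqref{eq:laguerre-bessel-bound} with $x=\sqrt{ay}$, and using $I_1(z)\le e^z$, we get $H_a(y)\le\sqrt{y/a}\,I_1(2\sqrt{ay})\le\sqrt{y}\,e^{2\sqrt{ay}}$. The equation $H_a(y)=e^{2\mathcal Ba}$ then forces $y_a(\mathcal B)\gtrsim\mathcal B^2a$. So the minimum $y_*(\mathcal B)$ is attained at some $a$ in a finite set.

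Second, fix $Y=y_*(\mathcal B)+1$. I would show there is an $a_0$ such that $R^X_{N,a}(u)<1$ for every $a\ge a_0$, every $N$, and every $u\le\sqrt{Y/N}$. Take the free case first. The upper bound of Theorem~\ref{thm:allbin-bessel} gives $S\le2u\mathcal J_c(su)\le2u(1+c)e^{2su}$, with $s=\sqrt{a(N-a)}$. Here $su\le\sqrt{aY}$, and $cu=Nu/(2s)\le\sqrt{YN}/(2s)$ is bounded whenever $s\ge\sqrt{N/2}$, which holds for $1\le a\le N-1$. Hence $R^{\mathrm F}_{N,a}\le C_Y e^{2\sqrt{aY}-2\mathcal Ba}$, and this tends to $0$ as $a\to\infty$ uniformly in $N$. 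The periodic case is the same argument with $\widehat S^{\mathrm P}=(Nu/s)I_1(2su)$ from Theorem~\ref{thm:periodic-bessel}.

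Consequently, for large $N$ only the finitely many $a<a_0$ can realise the minimum defining $u^{(\ge2)}_N$ inside the region $y\le Y$. Part (a) applied to each of them gives $N(u^{(\ge2)}_N)^2\to\min_{2\le a<a_0}y_a(\mathcal B)=y_*(\mathcal B)$.

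\emph{Part (c).} Write $\lambda=e^{2\mathcal B}$ and $f_a=\lambda^{-a}H_a(\lambda)$. Since $H_a$ is increasing, $y_a(\mathcal B)>\lambda$ exactly when $f_a<1$, and $y_a(\mathcal B)=\lambda$ exactly when $f_a=1$.

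For $a=2$ we have $H_2(y)=y+y^2/2$, so $f_2=1/\lambda+1/2$. This is larger than $1$ for $\lambda<2$, equal to $1$ at $\lambda=2$, and smaller than $1$ for $\lambda>2$. In particular $y_*\le y_2<\lambda$ when $\mathcal B<\mathcal B_c$.

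For the other two cases I would prove $H_{a+1}\le(1+y/2)H_a$, strictly for $a\ge2$ and $y>0$. Pascal's rule splits $H_{a+1}(y)$ into $H_a(y)$ plus $\sum_{r\ge1}\binom{a-1}{r-1}y^{r+1}/(r+1)!$. In the second sum, $y^{r+1}/(r+1)!\le\frac y2\,y^r/r!$ for $r\ge1$, and after the index shift that sum is $\frac y2H_a(y)$. It follows that $f_{a+1}\le\frac{1+\lambda/2}{\lambda}f_a\le f_a$ when $\lambda\ge2$, with strict inequality for $a\ge2$.

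So at $\lambda=2$ we get $f_2=1$ and $f_a<1$ for $a\ge3$, which gives $y_*=y_2=2$. For $\lambda>2$ we get $f_a<1$ for all $a\ge2$, so $y_a(\mathcal B)>\lambda$ for every $a\ge2$. Since the minimum is attained by (b), $y_*(\mathcal B)>\lambda$.
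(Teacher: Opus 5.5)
Your proof is correct and follows the paper's proof. Part (a) uses the same edge limits. Part (b) uses the same uniform tail bound $R^X_{N,a}(u)\le C_Y\,e^{2\sqrt{aY}-2\mathcal Ba}$ from the Bessel upper bounds. In (c), your recursion $H_{a+1}\le(1+y/2)H_a$ gives $e^{-2\mathcal Ba}H_a(e^{2\mathcal B})\le(e^{-2\mathcal B}+\frac12)^{a-1}$, which is exactly the paper's estimate obtained termwise from $(r+1)!\ge2^r$, and your strict inequality for $a\ge3$ at $\mathcal B_c$ is a small bonus. The only real variation is that you prove $y_a(\mathcal B)\to\infty$ directly from \eqref{eq:laguerre-bessel-bound}, whereas the paper lets $N\to\infty$ in the tail bound.

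Two cosmetic repairs are needed. First, your displayed periodic term is garbled and should read $\binom{a-1}{r-1}\frac{y^r}{r!}U_{r-1}(b)(b/N)^{r-1}$. Second, the identity $\min_{2\le a<a_0}y_a(\mathcal B)=y_*(\mathcal B)$ needs the remark that $y_a(\mathcal B)\ge Y$ for $a\ge a_0$; this follows by letting $N\to\infty$ in your tail bound with $a$ fixed and applying (a).
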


In terms of the coupling, these two crossings are
\[
J_N^{(1)}=\tfrac14\log N-\tfrac12\mathcal B+o(1),\qquad
J_N^{(\ge2)}=\tfrac14\log N-\tfrac14\log y_*(\mathcal B)+o(1).
\]
The field moves the last crossing with all up to the edge scale
$J=\frac14\log N+O(1)$ (in zero field it comes from the middle, with
leading coupling $\frac12\log N$). For large $N$, if
$\mathcal B<\mathcal B_c$, another bin is still more likely when the
one-spin bin ties with all up. So all up becomes most likely at a later
crossing, separated from the one-spin crossing by a positive limiting
gap in $J$. If $\mathcal B>\mathcal B_c$, the one-spin bin is last to
fall below all up. This gives only one transition to an all-up maximum,
though the two crossing values defined above are still different.
At $\mathcal B=\mathcal B_c$, their coupling gap tends to zero as
$N\to\infty$ (the two crossings need not agree at finite $N$).

This recovers the limiting dimensionless threshold in Stepanyan
et~al.\@~\cite[Eq.~(20)]{stepanyan2024}. For large $N$ their physical threshold is
$h_c\approx2J_{\mathrm{ex}}\log2/\log(N/2)$, where $J_{\mathrm{ex}}$ denotes their
exchange coupling. At the merging point
$J=\beta J_{\mathrm{ex}}\sim\frac14\log N$, so $\beta h_c\to\frac12\log2$.
Theorem~\ref{thm:field} gives the same limit for their periodic chain, and
also for free ends. The agreement
comes from the common fixed-edge limit $H_a$ proved below.

\begin{proof}
(a) For free ends we use \eqref{eq:edge-exact-series}.
With $b=N-a$ and $u=\sqrt{y/N}$, the edge variable $bu^2=y(1-a/N)$ tends to $y$,
$V_r=U_r(b)\to1$ for each $r\le a$, and the first two sums carry the factors
$2\sqrt{y/N}$ and $y/N$, so $S^{\mathrm F}_{N,a}(u)\to H_a(y)$ uniformly for $y$
in compact sets. For periodic ends, in \eqref{eq:periodicratio} the term with
$r$ runs is $\frac Nr\binom{a-1}{r-1}\binom{b-1}{r-1}u^{2r}$, and
$\binom{b-1}{r-1}u^{2r-2}\to y^{r-1}/(r-1)!$ uniformly in the same sense, so
$S^{\mathrm P}_{N,a}(u)\to\sum_{r=1}^a\binom{a-1}{r-1}y^r/r!=H_a(y)$. The
limit $H_a(y)e^{-2\mathcal Ba}$ increases strictly and continuously in $y$ from
$0$ to $\infty$, and $R^X_{N,a}$ increases in $u$, so the roots converge.

(b) We need a bound uniform in $a$. Let $Y>0$ and $0<y\le Y$. By
Theorems~\ref{thm:allbin-bessel} and~\ref{thm:periodic-bessel} the ratio is at
most its Bessel form, and $I_0,I_1\le e^z$. For free ends with
$s=\sqrt{a(N-a)}$ and $c=N/(2s)$ this gives
$S^{\mathrm F}_{N,a}(u)\le2u(1+c)e^{2su}$, and for periodic ends
$S^{\mathrm P}_{N,a}(u)\le(Nu/s)e^{2su}$. For every interior bin,
$a(N-a)\ge N-1$, so $Nu/s\le\sqrt{2y}$. Also
$2su\le2\sqrt{ya}$ and $2u\le2\sqrt y$. Thus for either boundary and every
$1\le a\le N-1$,
\[
R^X_{N,a}(u)\le4\sqrt Y\,e^{2\sqrt{Ya}-2\mathcal Ba},
\]
which is below $1$ for every $a\ge a_Y$, uniformly in $N$, for some $a_Y$
that depends only on $Y$ and $\mathcal B$. In particular,
every such interior bin has $N(u^{(a)}_N)^2>Y$. Letting
$N\to\infty$ in the displayed bound with $a$ fixed, and using (a), gives
$H_a(y)e^{-2\mathcal Ba}\le4\sqrt Y\,e^{2\sqrt{Ya}-2\mathcal Ba}<1$ for
$a\ge a_Y$ and $y\le Y$, so $y_a(\mathcal B)>Y$. Hence $y_a\to\infty$ and
the minimum is attained. Take $Y=y_*(\mathcal B)+1$. For large $N$ the minimum
over $a\ge2$ of $N(u^{(a)}_N)^2$ is a minimum over $2\le a<a_Y$, and (a) gives
its limit.

(c) Put $\lambda=e^{-2\mathcal B}$ and $y=e^{2\mathcal B}=1/\lambda$. Then
\[
H_a(y)e^{-2\mathcal Ba}=\sum_{r=0}^{a-1}\binom{a-1}r\frac{\lambda^{a-1-r}}{(r+1)!}
\le\sum_{r=0}^{a-1}\binom{a-1}r\lambda^{a-1-r}2^{-r}=\Bigl(\lambda+\frac12\Bigr)^{a-1},
\]
since $(r+1)!\ge2^r$. If $\mathcal B>\frac12\log2$, then $\lambda<\frac12$ and the
right side is below $1$ for every $a\ge2$. As $H_a$ increases, this gives
$y_a(\mathcal B)>e^{2\mathcal B}$ for every $a\ge2$, and so
$y_*(\mathcal B)>e^{2\mathcal B}$ because the minimum is attained. If
$\mathcal B<\frac12\log2$, then at $a=2$ the sum is $\lambda+\frac12>1$, so
$y_2(\mathcal B)<e^{2\mathcal B}$. At $\mathcal B=\frac12\log2$, the same
bound gives $H_a(2)e^{-2\mathcal Ba}\le1$ for every $a\ge2$, with equality
at $a=2$. Hence $y_a(\mathcal B)\ge2$ and $y_*(\mathcal B)=y_2(\mathcal B)=2$.
\end{proof}

The minimizing $a$ grows as the field weakens in the following examples.
For $\mathcal B=0.1$, $0.2$, $0.3$ and $0.5$, respectively, it is
$11$, $4$, $2$ and $2$, and $y_*=0.757$, $1.287$, $1.764$ and $2.972$.
At $N=10^4$, in the same order of fields,
$N(u_N^{(\ge2)})^2$ is $0.758$, $1.288$, $1.764$ and $2.973$ for periodic
ends, and $0.741$, $1.266$, $1.738$ and $2.939$ for free ends. We checked
the values at $N=10^4$ directly from the full run polynomials. Interval bounds
give the displayed digits and exclude competing bins near the edge,
and the tail bound in the proof of Theorem~\ref{thm:field}(b) excludes all the
remaining bins. The free
chain converges more slowly because of the extra terms from its ends
(of relative size $N^{-1/2}$). Here the field is fixed as $N$ grows.
If it decreases with $N$, the comparison should move back toward the middle
of the row, and that crossover remains open.

\FloatBarrier

\section{Discussion}\label{sec:discussion}

The crossing studied by Stepanyan et~al.\@~\cite{stepanyan2024} compares
the all-up configuration with one central magnetization bin. Around
the crossing there is a band of magnetizations of width
$N/\sqrt{\log N}$, with the parabolic free-energy profile of
Remark~\ref{rem:free-energy}. For free ends the central bin has about
$\log N$ walls on average, giving the mean energy gap
\[
2\mathcal W_N(u_N)=2\log N-\log\log N+O(1)
\]
above all up, in units of the exchange coupling
(Proposition~\ref{prop:walls}). Thus a factor $e$ change in their
probability ratio takes a coupling change of about
$1/(2\mathcal W_N(u_N))$. For this zero-field central-bin comparison
the width decreases like $1/\log N$, whereas the usual $1/N$ rounding at
a first-order transition assumes a nonzero latent heat per spin, which
gives an energy gap proportional to the
volume~\cite{privman1983,borgs1990,lee1991}. We therefore interpret
this crossing as a finite-size crossover, consistent with the
microcanonical analysis of finite chains in~\cite{sitarachu2020}.
The $\log\log N$ term comes from balancing one wall-free configuration
with the many configurations in a central bin. The $\log2$ boundary
shift comes from the restriction to even numbers of walls on a ring.

There are three relevant coupling scales. At the adjacent crossing,
$\xi\approx\sqrt N/2$ (Proposition~\ref{prop:adjacent}). For periodic
ends this is the first crossing of~\cite{stepanyan2024}, exactly at
$J=\frac14\log N$. At the central crossing,
$\xi\approx N/(2\log N)$ (Corollary~\ref{cor:couplings}). For free
ends, full order (all up or all down) has probability $\frac12$ when
$N/\xi\approx2\log2$ (Remark~\ref{rem:ordered}). Only at this last
scale is the correlation length comparable with the chain length.
Its coupling lies $\frac12\log\log N+O(1)$ above the central crossing.
This separation matters because the central crossing compares all up
with just one bin. Full order must become more likely still before
its weight equals that of the whole remaining chain.

The Bessel bounds behind these statements (Theorems~\ref{thm:bessel},
\ref{thm:allbin-bessel} and~\ref{thm:periodic-bessel}) hold at every $N$ and
every coupling.
Through Corollary~\ref{cor:fixed-M}, Theorem~\ref{thm:allbin-bessel} gives the
fixed-magnetization partition function of the free chain used
in~\cite{dantchev2022,dantchev2024} with an explicit relative error. The error
tends to zero when $Nu^2\to0$, in particular at the crossing, where the mean number of
domain walls grows like $\log N$. The approach extends to
the $d$-state Potts chain~\cite{paperB}. Three questions are open. First, the
bracket of Theorem~\ref{thm:every-n} is proved only for free ends and for the
level $\Theta=1$ of Remark~\ref{rem:convention}, and the same method should give it
for periodic ends and for other levels. Second, a field that shrinks with $N$
should move the comparison from the edge back toward the middle of the row,
and we do not treat it. Third, Theorem~\ref{thm:field} has no rate of convergence.

\section{Verification}\label{sec:verify}

Remark~\ref{rem:sign} is computer-assisted. Otherwise the proofs use two
computations. The first consists of the integer certificates for $2\le N\le174$, in Step~4
of the proof of Theorem~\ref{thm:every-n} and in
Proposition~\ref{prop:crossing-eq}. The second is the evaluation at $N=2774$
in Corollary~\ref{cor:every-n}(d). Both are described where they are used.
Separate Python scripts check the exact law, the orderings, the Bessel
bounds, the asymptotic formulas and the couplings of
Table~\ref{tab:couplings}. Only one of them needs a library outside the
standard one. Where possible a number is computed in two ways. For example,
the couplings of Table~\ref{tab:couplings} come from the run polynomials and
again from the transfer matrix of the chain (using NumPy), and the two agree
to $10^{-12}$.

Every numbered result of the paper is formalized in Lean~4, in the folder
\nolinkurl{kagey-problems/problem131/lean/PaperA} of the repository named in
the Data availability statement, except the Mehler--Heine limit for real
parameters in Remark~\ref{rem:jacobi}, which we cite (Lean proves it for
nonnegative integer parameters). The results stated at every $N$, among them
Theorem~\ref{thm:every-n} and Proposition~\ref{prop:crossing-eq} for all
$N\ge2$, are proved as stated. Each asymptotic error term is replaced by a
bound that holds from some point on and is at least as strong as the paper's,
with explicit constants in Corollary~\ref{cor:every-n}(b) and~(c),
Corollary~\ref{cor:couplings} and Proposition~\ref{prop:walls}. For
$2\le N\le174$ the Lean kernel checks the same certificates as Step~4 of the
proof of Theorem~\ref{thm:every-n}, with rational bounds on $\zeta_N$ that Lean
proves itself, and for Proposition~\ref{prop:crossing-eq} it checks rational
certificates of its own on the same brackets. For
Corollary~\ref{cor:every-n}(d) Lean proves its own rational bounds on
$\zeta_{2774}$ and extends the inequality to every larger $N$, and for
Remark~\ref{rem:sign} it checks one rational certificate between $z_N$ and
$\zeta_N$ for each $2\le N\le2773$. Lean also proves the finite Ising
dictionary and the large-argument forms of $I_0$ and $I_1$ of
Section~\ref{sec:setting}, the correlations
$\langle\sigma_0\sigma_r\rangle=e^{-r/\xi(J)}$ of the infinite chain (with
$\langle\sigma_i\sigma_{i+r}\rangle=\tanh^rJ$ in every finite free chain), the
consequences displayed after Theorem~\ref{thm:allbin-expansion}, and that
$R^X_{N,a}$ is the ratio of the Ising weights in the field of the
magnetizations $N-2a$ and $N$. With kernel-checked rational bounds it checks
that every value printed in Tables~\ref{tab:every-n} and~\ref{tab:couplings} is
the correct rounding of the quantity it names, and it bounds most decimals
quoted in the text and the captions. Among the parts not formalized are the couplings
$J_N^{(1)}$ and $J_N^{(\ge2)}$ displayed after Theorem~\ref{thm:field}, the
mean number of domain walls at the periodic crossing stated after
Proposition~\ref{prop:walls}, the Lambert form of $\zeta_N$, and the numbers
$0.755$, $0.74$ and $0.013$ of Sections~\ref{sec:walls}
and~\ref{sec:allbin-layer}. Only the standard axioms are used. The file
\nolinkurl{MAP.md} in that folder gives the Lean theorem for each result, or
says that there is none.

\section*{Acknowledgments}

I thank Peter Kagey, whose Open Problem
Collection~\cite{kagey131} asks for this crossing for a Galton board whose
ball keeps its direction with probability $p$.

\paragraph{Use of AI tools.} I used Claude Opus 5.5 (Anthropic, through
Claude Code, in September and October 2026) and GPT-6 Astra (OpenAI,
through Codex) in this work. I had conjectured a bracket for the crossing at
every $N$, but it had gaps. Claude Opus 5.5 proposed an improved version, with a
proof that uses the Bessel bounds of Lemma~\ref{lem:bessel-bounds}, and I then
wrote my own version, which is Theorem~\ref{thm:every-n} as stated here. Claude
Opus 5.5 also proposed Corollaries~\ref{cor:fixed-M} and~\ref{cor:couplings}
and Proposition~\ref{prop:walls}, and drafted their proofs, which I rewrote
by hand. The other results and their proofs are my own. I used both
models to check and revise the paper, to search the literature and draft some
of the comparisons with it, and to write parts of the Python verification
scripts and the figures.

The Lean~4 formalization in \nolinkurl{kagey-problems/problem131/lean/PaperA}
was generated using Claude Opus 5.5 through conversation, and it was revised
and checked against the results by me (\nolinkurl{MAP.md} in that folder gives
the Lean theorem for each result). Every proof, in the form present in this
paper, was written by me, and all mathematical claims, proofs and computations
in this paper are my responsibility.

\paragraph{Funding.} This work received no funding.

\paragraph{Conflict of interest.} The author declares no conflict of
interest.

\section*{Data availability statement}

The code and data are in the folder \nolinkurl{kagey-problems/problem131} of
\url{https://github.com/apemm/Kagey-Problems}, in the release
\texttt{paperA-arxiv-v3}\ifx\zenodoDOI\empty\else, with an archived snapshot at
\url{https://doi.org/\zenodoDOI}\fi. Run
\texttt{python -B verify\_all.py -{}-paper a} in that folder to repeat the
checks. \nolinkurl{SUPPLEMENT.md} describes each script, and
\nolinkurl{paperA/COMPUTATION.md} gives the commands and dependencies. The
certificates of Theorem~\ref{thm:every-n} are in
\nolinkurl{paperA/data/every_n_certificates.csv}. The script
\nolinkurl{paperA/verify_ising_coupling.py} prints
Table~\ref{tab:couplings}. Numbers in Sections~\ref{sec:middle}
and~\ref{sec:periodicwindow} that no script prints are in
\nolinkurl{paperA/data/quoted_values.txt}.

\appendix
\setcounter{equation}{0}\renewcommand{\theequation}{A.\arabic{equation}}%
\renewcommand{\theHequation}{A.\arabic{equation}}
\section{Proofs of the bracket and of the crossing equation}\label{app:every-n}

This appendix proves Lemma~\ref{lem:bessel-bounds} (Appendix~\ref{app:bessel-bounds}), the analytic half of
Theorem~\ref{thm:every-n} for $N\ge175$ (Appendix~\ref{app:every-n-large}), the identity behind the
certificates (Appendix~\ref{app:certificates}) and Proposition~\ref{prop:crossing-eq} (Appendix~\ref{app:crossing-eq}). The letters $s$, $v$, $y$, $\lambda_k$,
$g_\pm$, $r$, $r'$, $t$, $t'$, $\tau$,
$\kappa$, $m$, $B$, $D$, $B'$, $D'$, $\delta$ and $\phi$ are local to
this appendix, and each is defined where it first shows up. The letters $Y$, $Y'$, $\bar w$ and
$\bar w'$ have the meaning of the proof of Theorem~\ref{thm:every-n}. The letter $t$
is an angle in Appendix~A.1 and a bound in Appendices~A.2 and~A.4.
We round every decimal constant in the safe direction, namely so that the
inequality it enters stays true.

\subsection{Proof of Lemma~\ref{lem:bessel-bounds}}\label{app:bessel-bounds}

For integer $n$, $I_n(z)=\frac1\pi\int_0^\pi e^{z\cos t}\cos(nt)\,dt$
\cite[Eq.~10.32.3]{dlmf}, so
$I_0(z)+I_1(z)=\frac1\pi\int_0^\pi e^{z\cos t}(1+\cos t)\,dt$. Substitute
$s=1-\cos t$, so that $\sin t=\sqrt{s(2-s)}$ and $1+\cos t=2-s$, and then
$s=v/z$. This gives
\begin{equation}\label{eq:E-integral}
\cE(z)=\frac1{\sqrt\pi}\int_0^{2z}e^{-v}v^{-1/2}\sqrt{1-\frac v{2z}}\,dv .
\end{equation}
On $[0,1]$ we have $\sqrt{1-y}=1-\sum_{k\ge1}\lambda_ky^k$ with every $\lambda_k>0$,
$\lambda_1=\frac12$ and $\lambda_2=\frac18$, and $\sum_k\lambda_k=1$ by monotone convergence as
$y\uparrow1$. So $\sum_{k\ge3}\lambda_k=\frac38$, and $y^k\le y^3$ gives
$\sum_{k\ge3}\lambda_ky^k\le\frac38y^3$. Hence, for $0\le y\le1$,
\[
g_-(y):=1-\frac y2-\frac{y^2}8-\frac{3y^3}8\le\sqrt{1-y}\le1-\frac y2-\frac{y^2}8
\le g_+(y):=1-\frac y2-\frac{y^2}8+\frac{y^3}8 .
\]
The cubic $g_-$ decreases and $g_-(1)=0$, so $g_-\le0$ on $[1,\infty)$. Also
$8g_+(y)=(y-2)^2(y+1)+2(y-1)^2+2>0$ for $y\ge-1$. Put $y=v/(2z)$ in
\eqref{eq:E-integral}, replace $\sqrt{1-y}$ by $g_\pm$ and extend the integral
to $\infty$. The added piece is at most $0$ with $g_-$ and at least $0$ with
$g_+$, so the lower bound survives with $g_-$ and the upper with $g_+$. Since
$\frac1{\sqrt\pi}\int_0^\infty e^{-v}v^{k-1/2}\,dv=\Gamma(k+\frac12)/\Gamma(\frac12)$
equals $1,\frac12,\frac34,\frac{15}8$ for $k=0,1,2,3$, the two bounds integrate
to the two sides of the lemma. The bound $\cE\le1$ follows in the same way from
$\sqrt{1-y}\le1$. Finally $-\frac3{128z^2}+\frac{15}{512z^3}\le0$ exactly when
$z\ge\frac54$.

\subsection{Proof of Theorem~\ref{thm:every-n} for \texorpdfstring{$N\ge175$}{N >= 175}}\label{app:every-n-large}

\emph{Step 1. Bounds that hold for every $\zeta\ge4$.} Fix $N\ge175$ and write
$\zeta=\zeta_N$, $Y=\zeta+\frac1{8\zeta}$, $r=Y/N$ and
$n(\zeta)=\sqrt{8\zeta/\pi}\,e^\zeta$. Then $n(\zeta_N)=N$ by
\eqref{eq:zetaN}, and $\zeta\ge4$ since $n(4)<175$. For $k=1,2,3$ the function
$Y^k/n(\zeta)$ decreases on $[4,\infty)$, because its logarithmic derivative is
$k(1-\frac1{8\zeta^2})/Y-1-\frac1{2\zeta}<\frac kY-1<0$. The same holds for
$Y^4/n$, since $Y>4$, and for $\zeta/n$ and $\zeta^2/n$. The values at
$\zeta=4$ give, for every $\zeta\ge4$,
\[
r\le0.023135,\quad\frac{Y^2}N\le0.093262,\quad\frac{Y^3}N\le0.375960,\quad
\frac{Y^4}N\le1.51559,\quad\frac{\zeta(\zeta+1)}N\le0.114777 .
\]

\emph{Step 2. The upper bound.} Let $\bar w=Y/(1-r)$, so that $N\bar w/(N+\bar w)=Y$, and let
$\delta=\bar w(\bar w+2)/(2N)$. By Lemma~\ref{lem:bracket}(b) and \eqref{eq:bracket-log}
it suffices to show that $\mathcal G:=\psi(\bar w)-\psi(\zeta)+\log \cE(\bar w)+\log(1-\delta)>0$.
We bound the three parts from below. First,
$\psi(\bar w)-\psi(\zeta)=(\bar w-Y)+(Y-\zeta)+\frac12\log\frac Y\zeta+\frac12\log\frac{\bar w}Y$,
where $\bar w-Y=\frac{Yr}{1-r}$, $Y-\zeta=\frac1{8\zeta}$,
$\log\frac Y\zeta=\log(1+\frac1{8\zeta^2})\ge\frac1{8\zeta^2}-\frac1{128\zeta^4}$
and $\log\frac{\bar w}Y=-\log(1-r)\ge r$. Second, Lemma~\ref{lem:bessel-bounds} gives
$\cE(\bar w)\ge1-t$ with $t=\frac1{8\bar w}+\frac3{128\bar w^2}+\frac{45}{512\bar w^3}$. Since
$\bar w\ge Y\ge\zeta$ and
$\frac1{8Y}=\frac1{8\zeta}-\frac1{8\zeta(8\zeta^2+1)}\le\frac1{8\zeta}-\frac1{64\zeta^3}+\frac1{512\zeta^5}$,
we get
$t\le\tau:=\frac1{8\zeta}+\frac3{128\zeta^2}+\frac{37}{512\zeta^3}+\frac1{512\zeta^5}$
and $\log \cE(\bar w)\ge\log(1-\tau)\ge-\tau-\frac{\tau^2}{2(1-\tau)}$. Third,
$\delta=\frac{Yr}{2(1-r)^2}+\frac r{1-r}$ and
$\log(1-\delta)\ge-\delta-\frac{\delta^2}{2(1-\delta)}$. Adding the three parts
gives $\mathcal G\ge B+D$, where $B$ collects the terms in $\zeta$ alone and $D$ the
terms with a factor $r$,
\[
B=\frac5{128\zeta^2}-\frac{37}{512\zeta^3}-\frac1{256\zeta^4}-\frac1{512\zeta^5}-\frac{\tau^2}{2(1-\tau)},\qquad
D=\frac r{2(1-r)}\Bigl(\frac{Y(1-2r)}{1-r}-1-r\Bigr)-\frac{\delta^2}{2(1-\delta)} .
\]
Both $\zeta\tau$ and $\tau$ decrease in $\zeta$, so $\zeta^2B$ increases. At
$\zeta=4$ we have $\zeta\tau<0.135384$, $\tau<0.033846$ and
$\zeta^2B>0.011236$. Hence $B>0$. In $D$ the bracket is at least
$4.03125(1-0.023683)-1.023135>2.912$, so the first term of $D$ is at least
$1.456\,r$. Write $\delta=rm$ with $m=\frac Y{2(1-r)^2}+\frac1{1-r}$. Then
$\delta\le0.07255$ and
$rm^2=\frac{Y^3}{4N(1-r)^4}+\frac{Y^2}{N(1-r)^3}+\frac r{(1-r)^2}\le0.2276$, so
$\frac{\delta^2}{2(1-\delta)}=r\cdot\frac{rm^2}{2(1-\delta)}\le0.1228\,r$.
Hence $D\ge1.33\,r>0$. So $\mathcal G>0$ and $z_N<Y$.

\emph{Step 3. The lower bound.} Let $\varepsilon=\varepsilon_N$,
$Y'=Y-\varepsilon$, $r'=Y'/N$ and $\bar w'=Y'/(1-r')$, so that
$N\bar w'/(N+\bar w')=Y'$. By Step~1, $\varepsilon\le0.114777+\frac1{256}<0.1187$, so
$Y'\ge\zeta-0.1187\ge3.88$ and $\bar w'\ge Y'$. By Lemma~\ref{lem:bracket}(a)
and \eqref{eq:bracket-log} it suffices to show that
$\mathcal G':=\psi(\bar w')-\psi(\zeta)+\log \cE(\bar w')<0$. We bound the two parts from above.
First,
$\psi(\bar w')-\psi(\zeta)=(Y'-\zeta)+\frac{Y' r'}{1-r'}+\frac12\log\frac{Y'}\zeta-\frac12\log(1-r')$,
where $Y'-\zeta=\frac1{8\zeta}-\varepsilon$,
$\frac12\log\frac{Y'}\zeta\le\frac12(\frac{Y'}\zeta-1)=\frac1{16\zeta^2}-\frac\varepsilon{2\zeta}$
and $-\frac12\log(1-r')\le\frac{r'}{2(1-r')}$. The two terms in $r'$ add up to
$\frac{Y'^2+Y'/2}{N(1-r')}$. Second, Lemma~\ref{lem:bessel-bounds} gives
$\cE(\bar w')\le1-t'$ with
$t'=\frac1{8\bar w'}+\frac3{128\bar w'^2}-\frac{15}{512\bar w'^3}\in(0,1)$, so
$\log \cE(\bar w')\le-t'$. Here $\frac1{8\bar w'}=\frac1{8Y'}-\frac1{8N}$. Write
$Y'=\zeta(1+\kappa)$ with $\kappa=\frac1{8\zeta^2}-\frac\varepsilon\zeta$.
Then $\frac1{Y'}\ge\frac{1-\kappa}\zeta$, so
$-\frac1{8\bar w'}\le-\frac1{8\zeta}+\frac1{64\zeta^3}-\frac\varepsilon{8\zeta^2}+\frac1{8N}$.
Next, $x\mapsto x/(1-x/N)$ increases and $Y'\le Y$, so $\bar w'\le Y/(1-r)$ and
$\frac1{\bar w'^2}\ge\frac{(1-r)^2}{\zeta^2(1+\frac1{8\zeta^2})^2}\ge\frac{(1-2r)(1-\frac1{4\zeta^2})}{\zeta^2}$.
Finally $\frac{15}{512\bar w'^3}\le\frac{15}{512Y'^3}$. Adding, and using
$\varepsilon(1+\frac1{2\zeta})=\frac{\zeta^2+\frac32\zeta+\frac12}N+\frac1{16\zeta^2}+\frac1{32\zeta^3}$,
we get $\mathcal G'\le B'+D'-\frac\varepsilon{8\zeta^2}$ with
\[
B'=-\frac3{128\zeta^2}(1-2r)\Bigl(1-\frac1{4\zeta^2}\Bigr)+\frac{15}{512Y'^3}-\frac1{64\zeta^3},\qquad
D'=\frac1N\Bigl(\frac{Y'^2+Y'/2}{1-r'}+\frac18-\zeta^2-\frac32\zeta-\frac12\Bigr).
\]
For $B'<0$ it suffices that
$\zeta^2/Y'^3\le\frac45(1-2r)(1-\frac1{4\zeta^2})$. The left side is at most
$\zeta^2/(\zeta-0.12)^3\le16/3.88^3<0.274$, since this function decreases for
$\zeta>0.12$ (its logarithmic derivative is $\frac2\zeta-\frac3{\zeta-0.12}<0$
there). The right side is at least $0.8\cdot0.9537\cdot0.9843>0.75$. For
$D'<0$, note that $Y'\le Y$ and $r'\le r$, so
\[
\frac{Y'^2+Y'/2}{1-r'}\le Y^2+\frac Y2+\frac{Y^3/N+Y^2/(2N)}{1-r}
\le\zeta^2+\frac\zeta2+0.2667+0.4326,
\]
using $Y^2+\frac Y2=\zeta^2+\frac\zeta2+\frac14+\frac1{16\zeta}+\frac1{64\zeta^2}$.
This is less than $\zeta^2+\frac32\zeta+\frac38$ because $\zeta\ge4$. Hence
$\mathcal G'<0$ and $z_N>Y'$. This completes the proof of
Theorem~\ref{thm:every-n} for $N\ge175$.

The constants were recomputed in 40-digit arithmetic, and as a check outside
the proof we evaluated $\zeta^2B$, $D$, $B'$ and $D'$ on
$\zeta\in[4,60]$ with step $0.001$.

\subsection{The certificates}\label{app:certificates}

Let $c,d>0$ be integers. Multiplying \eqref{eq:SN} by $d^{N-1}$ gives
\[
d^{N-1}S_N(c/d)=\sum_jW(N,m_N,j)\,c^jd^{N-1-j},
\]
an integer. By Theorem~\ref{thm:pmf} it is the sum of
$d^{\rep(w)}c^{\chg(w)}$ over the words $w$ in bin $m_N$, that is, the
numerator $n_N(m_N)$ of Remark~\ref{rem:tables} with $(\alpha,\beta)=(d,c)$.
The recursion \eqref{eq:dp} with the weights $d,c$ in place of $p,q$ and
initial values $1$ computes the same sums. So $S_N(c/d)<1$ exactly when this
integer is less than $d^{N-1}$. This is the identity used in Step~4 of the
proof of Theorem~\ref{thm:every-n}.

\subsection{Proof of Proposition~\ref{prop:crossing-eq}}\label{app:crossing-eq}

\emph{Step 1. An exact formula for $R_N$.} Write $w=w_*=Nu_N$, $z=z_N$,
$\Lambda_N=2\Phi(w)/N$ and $\delta=w(w+2)/(2N)$. At $w=w_*$ the display in the
proof of Lemma~\ref{lem:bracket} gives $1\le\Lambda_N\le1/(1-\delta)$ when
$\delta<1$, and \eqref{eq:bracket-log} gives
$\psi(w)+\log \cE(w)=L+c_0+\log\Lambda_N$. Since $w-z=wz/N$, $w/z=1+w/N$ and
$\frac1{8z}=\frac1{8w}+\frac1{8N}$, the remainder
$R_N=\psi(z)-L-c_0-\frac1{8z}$ is
\begin{equation}\label{eq:RN-formula}
R_N=\log\Lambda_N-\Bigl(\log \cE(w)+\frac1{8w}\Bigr)-\frac1{8N}-\frac{wz}N-\frac12\log\Bigl(1+\frac wN\Bigr).
\end{equation}

\emph{Step 2. The lower bound for $N\ge175$.} By the proof of
Theorem~\ref{thm:every-n} (Steps~2 and~3 of Appendix~\ref{app:every-n-large}),
$3.88\le z\le w\le Y/(1-r)$. Note that
$\log\Lambda_N\ge0$ and $-\log \cE(w)\ge-\log(1-\frac1{8w})\ge\frac1{8w}$ by
Lemma~\ref{lem:bessel-bounds}, since $w\ge\frac54$. So
$R_N\ge-\frac{wz+w/2+1/8}N=-\frac{z^2+z/2}{N-z}-\frac1{8N}$. This is at least
$-\frac{z^2+z}N$ as soon as $N\ge\frac{2z^2(z+1)}{z-1/4}$. The right side
increases in $z$ and $z\le Y$, so it suffices that
$Y^2/N\le\frac{Y-1/4}{2(Y+1)}$. The left side is at most $0.0933$ and the
right side at least $0.3757$.

\emph{Step 3. The upper bound for $N\ge175$.} Let
$t=\frac1{8w}+\frac3{128w^2}+\frac{45}{512w^3}\le\frac{1.09504}{8w}$, which
holds as $w\ge3.88$. Lemma~\ref{lem:bessel-bounds} and
$-\log(1-t)\le t+\frac{t^2}{2(1-t)}$ give
\[
-\log \cE(w)-\frac1{8w}\le\frac3{128w^2}+\frac{45}{512w^3}+\frac{t^2}{2(1-t)}\le\frac{0.0559}{w^2}
\le\frac{0.0559}{z^2}.
\]
Using $\log\Lambda_N\le\delta+\frac{\delta^2}{2(1-\delta)}$,
$\log(1+x)\ge x-\frac{x^2}2$ and $wz\ge w^2-w^3/N$, the rest of
\eqref{eq:RN-formula} is at most
$-\frac{(w-1/2)^2}{2N}+\frac{w^3+w^2/4}{N^2}+\frac{\delta^2}{2(1-\delta)}$. By
the bounds on $r$ and $Y^k/N$ in Step~1 of Appendix~\ref{app:every-n-large}, with
$w\le Y/(1-r)$ and hence $\delta\le0.07255$ as in Step~2 there, we have $\frac{w^3+w^2/4}N\le0.4278$ and
$N\frac{\delta^2}{2(1-\delta)}=\frac{(w^4+4w^3+4w^2)/(4N)}{2(1-\delta)}\le0.4945$,
while $\frac{(w-1/2)^2}2\ge5.71$. So the rest is negative and
$R_N\le0.0559/z^2<\frac1{16z^2}$.

\emph{Step 4. The form in $L$ for $N\ge175$.} We have
$z<Y=L+c_0-\frac12\log\zeta+\frac1{8\zeta}<L$. Also
$z>\zeta-0.1187\ge\nu_N-0.1187\ge L/2$ (by Corollary~\ref{cor:every-n}(b) for
$\zeta\ge\nu_N$), because $L-\log L\ge1.18$ for
$L\ge\log175$. Then $\frac1{16z^2}\le\frac1{4L^2}$ and
$\frac{z^2+z}N\le\frac{L^2+L}N$.

\emph{Step 5. Short chains.} Here $N\le174$. The function $\phi(z)=\psi(z)-L-c_0-\frac1{8z}$
increases in $z$, and $R_N=\phi(z_N)$, while $\frac1{16z^2}$ and
$-\frac{z^2+z}N$ decrease. So it suffices to check
$\phi(z(c'/d))\le\frac1{16z(c'/d)^2}$ and
$\phi(z(c/d))\ge-\frac{z(c/d)^2+z(c/d)}N$ with the certificates, together with
$z(c/d)\ge L/2$ and, for $N\ge3$, $z(c'/d)\le L$.
The outward interval calculation described in Step~4 of the proof of
Theorem~\ref{thm:every-n} verifies every comparison. The gaps exceed
$0.037$ (upper) and $0.081$ (lower), with the smallest at $N=174$. For $N=2$ we
have $S_2(u)=2u$, so $u_2=\frac12$ and $z_2=\frac23$, and $\frac23<\log2$ is
equivalent to $e^2<8$, which holds since $e^2<2.72^2=7.3984$. This completes the
proof of Proposition~\ref{prop:crossing-eq}.

\end{document}